\tikzstyle{directed}=[postaction={decorate,decoration={markings,
\tikzstyle{reverse directed}=[postaction={decorate,decoration={markings,
\pgfplotsset{compat=1.11}
\newcommand{\newbold}[1]{\bgroup\contourlength{0.01em}\contour{black}{#1}\egroup}
\newtheorem{theorem}{Theorem}[section]
\newtheorem{lemma}[theorem]{Lemma}
\newtheorem{metalemma}[theorem]{Meta Lemma}
\newtheorem{proposition}[theorem]{Proposition}
\newtheorem{corollary}[theorem]{Corollary}
\newtheorem{problem}[theorem]{Problem}
\theoremstyle{definition}
\newtheorem{definition}[theorem]{Definition}
\newtheorem{remark}{Remark}
\newtheorem{example}[theorem]{Example}
\newenvironment{cor}{\begin{corollary}}{
\end{corollary}}
\newcommand{\problemtitle}[1]{\gdef\@problemtitle{#1}}
\newcommand{\probleminput}[1]{\gdef\@probleminput{#1}}
\newcommand{\problemquestion}[1]{\gdef\@problemquestion{#1}}
  \par\addvspace{.5\baselineskip}
  \par\addvspace{.5\baselineskip}
  \par\addvspace{.5\baselineskip}
  \par\addvspace{.5\baselineskip}
\newcommand{\comment}[1]
	   {\ifthenelse{\equal{\showcomments}{yes}}
	     {\footnotemark\marginpar{\sffamily{\tiny
		   \addtocounter{footnote}{-1}\footnotemark#1
}\normalfont}}{}}
\newcommand{\showcomments}{yes}
\newcommand{\bel}[1]{\begin{equation}\label{#1}}
\newcommand{\ee}{\end{equation}}
\newcommand{\rank}{\text{rank}}
\newcommand{\LBA}{\left\{\begin{array}}
\newcommand{\EAR}{\end{array}\right.}
\def\ovb{{\overline{b}}}
\def\ovc{{\overline{c}}}
\def\ovd{{\overline{d}}}
\def\ove{{\overline{e}}}
\def\ovf{{\overline{f}}}
\def\ovo{{\overline{0}}}
\def\ovr{{\overline{r}}}
\def\ovs{{\overline{s}}}
\def\ovu{{\overline{u}}}
\def\ovv{{\overline{v}}}
\def\ovx{{\overline{x}}}
\def\ovdelta{{\overline{\delta}}}
\def\ovfdelta{{\overline{f}{}^{\ovdelta}}}
\def\CC{{\mathcal C}}
\def\CE{{\mathcal E}}
\def\CL{{\mathcal L}}
\def\P{{\mathbf{P}}}
\def\NP{{\mathbf{NP}}}
\def\DP{{\mathbf{DP}}}
\def\bv{{\mathbf{v}}}
\def\QSP{{\textsf{QSP}}}
\def\ZOE{{\textsf{ZOE}}}
\def\TPART{{\textsf{3PART}}}
\newcommand{\gp}[1]{{\left\langle #1 \right\rangle}}
\newcommand{\rb}[1]{{\left( #1 \right)}}
\newcommand{\Set}[2]{\left\{\, #1 \;\middle|\; #2 \,\right\}}
\def\MN{{\mathbb{N}}}
\def\MZ{{\mathbb{Z}}}
\def\MQ{{\mathbb{Q}}}
\def\MR{{\mathbb{R}}}
\DeclareMathOperator{\Mat}{{Mat}}
\DeclareMathOperator{\Aut}{{Aut}}
\DeclareMathOperator{\Span}{{Span}}
\DeclareMathOperator{\im}{{im}}
\DeclareMathOperator{\size}{{size}}
\DeclareMathOperator{\supp}{{supp}}
\DeclareMathOperator{\diam}{{diam}}
\DeclareMathOperator{\spn}{{span}}
\DeclareMathOperator{\gr}{{gr}}
\newcommand{\sasha}[1]{\todo[inline,color=cyan]{\footnotesize SU: #1}}
\renewcommand{\tocsection}[3]{%
  \indentlabel{\@ifnotempty{#2}{\bfseries\ignorespaces#1 #2\quad}}\bfseries#3}
\renewcommand{\tocsubsection}[3]{%
  \indentlabel{\@ifnotempty{#2}{\ignorespaces#1 #2\quad}}#3}
\newcommand\@dotsep{4.5}
\def\@tocline#1#2#3#4#5#6#7{\relax
  \ifnum #1>\c@tocdepth 
  \else
    \par \addpenalty\@secpenalty\addvspace{#2}%
    \begingroup \hyphenpenalty\@M
    \@ifempty{#4}{%
      \@tempdima\csname r@tocindent\number#1\endcsname\relax
    }{%
      \@tempdima#4\relax
    }%
    \parindent\z@ \leftskip#3\relax \advance\leftskip\@tempdima\relax
    \rightskip\@pnumwidth plus1em \parfillskip-\@pnumwidth
    #5\leavevmode\hskip-\@tempdima{#6}\nobreak
    \leaders\hbox{$\m@th\mkern \@dotsep mu\hbox{.}\mkern \@dotsep mu$}\hfill
    \nobreak
    \hbox to\@pnumwidth{\@tocpagenum{\ifnum#1=1\bfseries\fi#7}}\par
    \nobreak
    \endgroup
  \fi}
\renewcommand\csname r@tocindent0\endcsname{0pt}
\def\l@subsection{\@tocline{2}{0pt}{2.5pc}{5pc}{}}
\title{Orientable quadratic equations in wreath products}
\author{Alexander Ushakov}
\author{Chloe Weiers}
\date{\today}
\begin{document}

\maketitle

\begin{abstract}
In this paper we study the complexity of solving orientable quadratic equations in wreath products $A\wr B$ of finitely generated abelian groups. We give a classification of cases 
(depending on genus and other characteristics of a given equation)
when the problem is computationally hard or feasible.
\\
\noindent
\textbf{Keywords.}
Diophantine problem, quadratic equations, spherical equations, metabelian groups, wreath product, complexity, NP-completeness.

\noindent
\textbf{2010 Mathematics Subject Classification.} 
20F16, 20F10, 68W30.
\end{abstract}

\section{Introduction}

Let $F = F(Z)$ denote the free group on countably many generators $Z = \{z_i\}_{i=1}^\infty$. For a group $G$, an \emph{equation over $G$ with variables in $Z$} is an equation of the form $W = 1$, where $W \in F*G$. If $W = z_{i_1}g_1\cdots z_{i_k}g_k$, with $z_{i_j}\in Z$ and $g_j\in G$, then we refer to $\{z_{i_1},\ldots,z_{i_k}\}$ as the set of \emph{variables} and to $\{g_1,\ldots,g_k\}$ as the set of \emph{constants} (or \emph{coefficients}) of $W$. We occasionally write $W(z_1,\ldots,z_k)$ or $W(z_1,\ldots,z_k;g_1,\ldots,g_k)$ to indicate that the variables in $W$ are precisely $z_1,\ldots,z_k$ and (in the latter case) the constants are precisely $g_1,\ldots, g_k$. 

A \emph{solution} to an equation $W(z_1,\ldots,z_k)=1$ over $G$ is a homomorphism $$\varphi\colon F*G\rightarrow G $$
such that $\varphi|_G = 1_G$ and $W\in \ker \varphi$. If $\varphi$ is a solution of $W=1$ and $g_i = \varphi(z_i)$, then we often say that $g_1,\ldots,g_k$ is a solution of $W=1$. We also write $W(g_1,\ldots,g_k)$ or $W(\bar{g})$ to denote the image of $W(z_1,\ldots,z_k)$ under the homomorphism $F*G \rightarrow G$ which maps $z_i \mapsto g_i$ (and restricts to the identity on $G$). Note that while some authors allow for solutions in some overgroup $H$ (with an embedding $G\hookrightarrow H$), in this article we only consider solutions in $G$. 

In this paper we assume that $G$ comes equipped with a fixed generating set $X$ and elements of $G$ are given as products of elements of $X$ and their inverses. This naturally defines the length (or size) of the equation $W=1$ as the length of its left-hand side $W$.

\begin{definition}
An equation $W=1$ is called \emph{quadratic} if each variable appears exactly twice (as either $z_i$ or $z_i^{-1}$).
\end{definition}

The \emph{Diophantine problem} ($\DP$) in a group $G$ for a class of equations $C$ is an algorithmic question to decide whether a given equation $W=1$ in $C$ has a solution. In this paper we study the class of quadratic equations over wreath products of finitely generated abelian groups.

\subsection{Classification of quadratic equations}
We say that equations $W = 1$ and $V=1$ are \emph{equivalent} 
if there is an automorphism $\phi\in \Aut(F\ast G)$ such that 
$\phi$ is the identity on $G$ and $\phi(W) = V$. It is a well 
known consequence of the classification of compact surfaces 
that any quadratic equation over $G$ is equivalent, via an 
automorphism $\phi$ computable in time $O(|W|^2)$, to an equation 
in exactly one of the following three \emph{standard forms} (see 
\cite{Comerford_Edmunds:1981,Grigorchuk-Kurchanov:1992}):
\begin{align}
\prod_{j=1}^m z_j^{-1} c_j z_j&=1 &m\ge 1,\label{eq:spherical}\\
\prod_{i=1}^g[x_i,y_i]\prod_{j=1}^m z_j^{-1} c_j z_j&=1 &g\geq 1, m\geq 0, \label{eq:orientable}\\
\prod_{i = 1}^g x_i^2\prod_{j=1}^m z_j^{-1} c_j z_j&=1 &g \geq 1, m \geq 0.\label{eq:nonorientable}
\end{align}
The number $g$ is the \emph{genus} of the equation, and both $g$ and $m$ 
(the number of constants) are invariants. 
The standard forms are called, respectively, 
\emph{spherical}, \emph{orientable of genus $g$}, and 
\emph{non-orientable of genus $g$}.

\subsection{Previous results}
The Diophantine problem for 
quadratic equations naturally generalizes fundamental (Dehn) problems of group theory such as the word and conjugacy problems. There is a profound relationship between quadratic equations and compact surfaces, which makes quadratic equations an interesting
object of study.

The study of quadratic equations originated with Malcev,
who in \cite{Malcev:1962} considered some particular
quadratic equations over free groups. This line of research for free groups was continued with the study of solution sets in \cite{Grigorchuk-Kurchanov:1992}. Furthermore, $\NP$-completeness was proved in \cite{Diekert-Robson:1999,Kharlampovich-Lysenok-Myasnikov-Touikan:2010}.
Various classes of infinite groups were analyzed:
hyperbolic groups were studied in
\cite{Grigorchuk-Lysenok:1992,Kharlampovich-Taam:2017},
the first Grigorchuk group was studied in
\cite{Lysenok-Miasnikov-Ushakov:2016} and \cite{Bartholdi-Groth-Lysenok:2022},
free metabelian groups were studied in
\cite{Lysenok-Ushakov:2015,Lysenok-Ushakov:2021},
and metabelian Baumslag--Solitar groups were studied in
\cite{Mandel-Ushakov:2023b}.
The Diophantine problem for quadratic equations over the 
lamplighter group was recently shown to be decidable by Kharlampovich, Lopez, 
and Miasnikov 
(see \cite{Kharlampovich-Lopez-Miasnikov:2020}). Additionally, Ushakov and Weiers proved that the Diophantine problem for orientable quadratic equations over the lamplighter group is decidable in polynomial time, and the problem for non-orientable equations of genus two is decidable in linear time and of genus one is $\NP$-complete (see \cite{ushakov2023quadratic}).

Natural applications of quadratic equations (particularly spherical equations) 
to cryptography have also been explored. In particular, 
\cite{ushakov2024constrained} establishes connections
between computational group theory and lattice-based cryptography 
via spherical equations over a certain class of finite metabelian groups.

\subsection{Our results}

This paper goes beyond the lamplighter group $\MZ_2\wr \MZ$ studied in
\cite{ushakov2023quadratic} and provides a deep analysis 
of the computational combinatorics for orientable quadratic equations
over wreath products $A\wr B$ of finitely generated abelian groups.
In \cite{ushakov2023quadratic}, it was shown that
the number of conjugates $m$ is the source of hardness for solving 
equations \eqref{eq:spherical} in the lamplighter group.
For a wreath product of general finitely generated abelian groups
$A\wr B$ the situation is more complicated.
Throughout the paper we assume that $A$ is not trivial.

First, we show that for a general wreath product of finitely generated abelian groups $A$ and $B$, the Diophantine problem for orientable equations remains in $\NP$, even when 
$A$ and $B$ are given as part of the input (the uniform problem). 
However, in the general case, parameters such as the ranks of 
$A$ and $B$ and the genus of the equation impact the complexity 
of solving these equations.
\begin{itemize}
\item 
The complexity of the problem is directly influenced by the group $B$.
Specifically, for a fixed $A\wr B$,
the problem is $\NP$-hard if and only if
$|B|=\infty$, see Corollary \ref{co:DP-infinite-B}.
\item 
The contribution of the group $A$ to the complexity is limited in a sense
that the problem is $\NP$-hard for $A_1\wr B$
if and only if it is $\NP$-hard for $A_2\wr B$,
see Corollary \ref{co:irrelevant-A2}.
However, if $A$ is a part of the input, the problem becomes $\NP$-hard
even when $B=\MZ_2$ is fixed, see Corollary \ref{co:finite-B}.
\item 
The genus $g$ also plays a crucial role in the complexity of the equation.
If $g$ is sufficiently large (when $g\ge \rank(B)/2$), then
the equation can be solved in polynomial time, see Corollary \ref{co:g-vs-rankB}.
Otherwise, for intermediate values of $g$ such as $g < \rank(B)/2$, 
the problem generally remains $\NP$-hard as shown in
Corollary \ref{co:mid-h}.
\end{itemize}

\subsection{Model of computation and representation of basic objects}

We use a random-access machine as a model of computation.
Integers are given in unary form unless stated otherwise.
Elements of $A\wr B$ are given as words, i.e., sequences of letters
of the group alphabet $\{a_1^{\pm1},\ldots,a_n^{\pm1},b_1^{\pm1},\ldots,b_m^{\pm1}\}$ for $A\wr B$. Additional details for representation of certain objects are given as needed later in the paper.


\subsection{Outline}
In Section \ref{se:Preliminaries}, we recall the definition of the wreath product
$A\wr B$ and review its basic properties. We also discuss the properties of finitely generated abelian groups and lattices, 
and remind the reader of the definitions of the three partition problem and 
the zero-one equation problem.
In Section \ref{se:QSP}, we define a decision problem 
termed the quotient-sum problem ($\QSP$), to which we later reduce the Diophantine problem for orientable quadratic equations. In Section \ref{se:QSP-properties},
we examine the computational properties of several variations of $\QSP$, which correspond to the Diophantine problem for certain natural classes of equations over wreath products $A\wr B$.
In Section \ref{se:orientable}, 
we use the computational properties of $\QSP$ established in Section \ref{se:QSP-properties}
to draw conclusions about orientable quadratic equations.
The paper concludes with Section \ref{se:Conclusion}.


\section{Preliminaries}
\label{se:Preliminaries}

\subsection{Wreath products of abelian groups}

We begin by reviewing basic facts about restricted wreath products. 
Let $A$ and $B$ be groups. Define a set 
$$
A^B = \Set{f\colon B\to A}{|\supp(f)|<\infty}
$$
and a binary operation $+$ on $A^B$ which, for $f,g\in A^B$, produces $f+g\in A^B$ defined by
$$
(f+g)(x) = f(x)+g(x) \ \mbox{ for } \ x\in B.
$$
For $f\in A^B$ and $b\in B$, define $f^b\in A^B$ by
$$
f^b(x)=f(bx) \
\mbox{ for } \ x\in B,
$$
which is a right $B$-action on $A^B$ because $f^e=f$ (where $e\in B$ is the identity of $B$) and $f^{(b_1b_2)}(x)=f(b_1b_2x)=
(f^{b_1})^{b_2}(x)$. 
Hence, we can consider a semidirect product $B \ltimes A^B$
equipped with the operation
$$
(\delta_1,f_1)(\delta_2,f_2)=
(\delta_1+\delta_2,f_1^{\delta_2}+f_2).
$$
The group $B \ltimes A^B$ is called the \emph{restricted wreath product} of $A$ and $B$ and is denoted by $A\wr B$. Henceforth we assume that $A$ and $B$ are finitely generated abelian groups and use additive group notation.

It is convenient to treat $a\in A$ as a function $f\in A^B$ defined as
$$
f(x)=
\begin{cases}
a&\mbox{ if }x=0,\\
0&\mbox{ otherwise.}
\end{cases}
$$
For $a\in A$ and $b\in B$, that allows us to use the notation $a^b$
for a function $f\in A^B$ such that
$$
f(x)=
\begin{cases}
a&\mbox{ if }x=-b,\\
0&\mbox{ otherwise,}
\end{cases}
$$
and, slightly abusing notation, 
the corresponding element $(0,f)\in A\wr B$.
Then every $f\in A^B$ can be expressed as 
\[
f = \sum_{i=1}^k a_i^{-b_i},
\]
if $\supp(f) = \{b_1,\ldots, b_k\}$ and $f(b_i)=a_i$.
This notation is unique for $f$ if the exponents $b_1,\ldots,b_k$ are distinct
and $a_1,\dots,a_k$ are nontrivial.

\subsection{$A^B$ as a group ring}

For a commutative ring $R$ with unity and a group 
$(G,\cdot)$ we can define the \textit{group ring} $R[G]$ of $G$ with 
scalar coefficients in $R$ as the set of linear combinations 
of elements of $G$
$$
R[G]=
\Set{\sum_{g\in G} r_g\cdot g}{r_g\in R, \mbox{ where all but finitely many $r_g$ are $0$}},
$$
with addition defined by
$$
\sum_{g\in G} r_g\cdot g+
\sum_{g\in G} s_g\cdot g=
\sum_{g\in G} (r_g+s_g)\cdot g
$$
and multiplication $\times$ defined by
$$
\rb{\sum_{g\in G} r_g\cdot g}
\times
\rb{\sum_{g\in G} s_g\cdot g}=
\sum_{g\in G} t_g\cdot g,
\mbox{ where } 
t_g=\sum_{g_1\cdot g_2=g}r_{g_1}s_{g_2}.
$$
Now, by the fundamental theorem of finitely generated abelian groups,
$$
A\ \simeq\ \MZ^s\times \MZ_{\alpha_1}\times\dots\times\MZ_{\alpha_t}.
$$
This decomposition naturally defines $A$ as a direct product of cyclic rings,
which makes $A$ a commutative ring with unity and allows us to define
the group ring $A[B]$.
The mapping $\tau\colon (A^B,+) \to (A[B],+)$ defined by
$$
f\ \stackrel{\tau}{\mapsto}\ \sum_{b\in\supp(f)} f(b)\cdot b,
$$
is an abelian group isomorphism.
Therefore, we can view $A^B$ as a group ring with 
the ring of scalars $A$ and the basis $B$. Observe that, for a function $f\in A^B$, if $\supp(f) = \{b_1,\ldots, b_k\}$ and $f(b_i)=a_i$, then $\tau(f)=a_1\cdot b_1+\dots+a_k\cdot b_k$.

\begin{lemma}
$
\tau(f)=a_1\cdot b_1+\dots+a_k\cdot b_k
\ \ \ \Leftrightarrow\ \ \ 
\tau(f^\delta)=a_1\cdot \delta^{-1}b_1+\dots+a_k\cdot \delta^{-1}b_k
$
for every $f\in A^B$ and $\delta\in B$.
\end{lemma}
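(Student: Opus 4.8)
The plan is to observe that the statement is just the definition of the right $B$-action $f\mapsto f^\delta$ on $A^B$ read through the isomorphism $\tau$, so the whole content is a bookkeeping computation on supports and values of functions. I would first unpack what the right-hand expression in each equivalence means: by the observation preceding the lemma, $\tau(f)=a_1\cdot b_1+\dots+a_k\cdot b_k$ is equivalent to saying $\supp(f)\subseteq\{b_1,\dots,b_k\}$ with $f(b_i)=a_i$ for each $i$ (and, in the reduced form, the $b_i$ distinct and the $a_i$ nontrivial). So the lemma amounts to: $f$ takes the value $a_i$ at $b_i$ (and is $0$ elsewhere) if and only if $f^\delta$ takes the value $a_i$ at $\delta^{-1}b_i$ (and is $0$ elsewhere).

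For the forward direction, I would compute directly from the definition $f^\delta(x)=f(\delta x)$. For $y\in B$ we have $f^\delta(y)=f(\delta y)\neq 0$ precisely when $\delta y\in\supp(f)$, i.e.\ when $\delta y=b_i$ for some $i$, i.e.\ when $y=\delta^{-1}b_i$; and in that case $f^\delta(\delta^{-1}b_i)=f(\delta\cdot\delta^{-1}b_i)=f(b_i)=a_i$. Hence $\supp(f^\delta)=\{\delta^{-1}b_1,\dots,\delta^{-1}b_k\}$ with the prescribed values, and since left multiplication by $\delta^{-1}$ is a bijection of $B$ it preserves distinctness of the exponents; applying the observation again gives $\tau(f^\delta)=a_1\cdot\delta^{-1}b_1+\dots+a_k\cdot\delta^{-1}b_k$.

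For the converse I would simply invoke invertibility of the action rather than redo the computation: apply the forward direction to the function $g:=f^\delta$ and the element $\delta^{-1}\in B$. This yields $\tau\!\left(g^{\delta^{-1}}\right)=a_1\cdot\delta(\delta^{-1}b_1)+\dots+a_k\cdot\delta(\delta^{-1}b_k)=a_1\cdot b_1+\dots+a_k\cdot b_k$, and since $f\mapsto f^\delta$ is a right $B$-action we have $g^{\delta^{-1}}=(f^\delta)^{\delta^{-1}}=f^{\delta\delta^{-1}}=f^{e}=f$, so $\tau(f)=a_1\cdot b_1+\dots+a_k\cdot b_k$, as required. There is essentially no obstacle here; the only point demanding a little care is keeping the direction of the (right) action straight, so that the exponent transforms by $\delta^{-1}$ rather than $\delta$, and noting that because $\tau$ is a bijection the equalities of ring elements really do pin down the functions.
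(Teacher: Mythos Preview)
Your proof is correct and follows essentially the same approach as the paper's, which simply records the one-line observation that $f(x)=a \Leftrightarrow f^\delta(\delta^{-1}x)=a$ and concludes. Your argument is that same computation written out in full, with the converse handled by applying the forward direction to $\delta^{-1}$ rather than by reading the biconditional backwards.
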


\begin{proof}
For every $x\in B$ and $a\in A$, we have
$
f(x)=a
\ \ \Leftrightarrow\ \ 
f^\delta(\delta^{-1}x) = a.
$ Thus the statement holds.
\end{proof}

Therefore, multiplication on $A[B]$ induces multiplication on $A^B$
\[
\rb{\sum_{i=1}^k a_i^{b_i}} 
\cdot 
\rb{\sum_{j=1}^l c_j^{d_j}} 
=
\sum_{i=1}^k\sum_{j=1}^l a_i^{b_i} c_j^{d_j} 
=
\sum_{i=1}^k\sum_{j=1}^l (a_ic_j)^{b_i+d_j}.
\]
Denote by $1$ the unity of $A$. Then $1^0$ is the unity of $A^B$.
Observe that, for any $f\in A^B$ and $b\in B$, the following identities hold:
\begin{enumerate}
\item[1)] $f^b = f\cdot 1^b$,
\item[2)]
$f^0 =f=f\cdot 1^0$,
\item[3)]
$f-f^b = f(1^0-1^b)$.
\end{enumerate}

\subsection{Some properties of $A^B$}
\label{se:A-B-properties}

Let us introduce some notation.
For a subgroup $N\unlhd B$, let $\varphi\colon B\to B/N$ be the
canonical epimorphism defined by $b\stackrel{\varphi}{\mapsto}b+N$
and $\varphi^\ast\colon A^B\to A^{B/N}$ the induced epimorphism
defined for $f\in A^B$ and $x\in B/N$ by
$$
\varphi^\ast(f)(x) = \sum_{\varphi(y)=x} f(y).
$$
For $f,g\in A^B$, $f=_{A^{B/N}}g$ means that $\varphi^\ast(f)=\varphi^\ast(g)$.

For $\ovb=(b_1,\ldots,b_k)\in B^k$, define 
a subgroup $N=\gp{b_1,\ldots,b_k}\le B$, 
a linear function $\lambda_{\ovb}\colon (A^B)^k\to A^B$ by
$$
(f_1,\ldots,f_k)
\ \ \stackrel{\lambda_{\ovb}}{\mapsto}\ \ 
f_1(1^0-1^{b_1})+\cdots+f_k(1^0-1^{b_k}),
$$ 
and denote the epimorphism $\varphi^\ast\colon A^B\to A^{B/N}$ 
defined above by $\pi_\ovb$.
It is easy to check that both functions are homomorphisms
and that $\im(\lambda_{\ovb})\subseteq\ker(\pi_{\ovb})$
because
\begin{align*}
g\in\im(\lambda_{\ovb})
&\ \ \Rightarrow\ \  
g=f_1(1^0-1^{b_1})+\cdots+f_k(1^0-1^{b_k})
\mbox{ for some }f_1,\ldots,f_k\in A^B\\
&\ \ \Rightarrow\ \  
\pi_{\ovb}(g)=
\pi_{\ovb}(f_1)\pi_{\ovb}(1^0-1^{b_1}) + \cdots + \pi_{\ovb}(f_k)\pi_{\ovb}(1^0-1^{b_k})=0.
\end{align*}

\begin{lemma}\label{le:lambda-image}
$a^{x}-a^{x+\delta}\in \im(\lambda_\ovb)$
for every $a\in A$, $x\in B$, and $\delta\in N$.
\end{lemma}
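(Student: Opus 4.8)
The plan is to reduce to the case $\delta$ a generator and then use the multiplicative structure of $A^B$ as a group ring. First I would prove the statement when $\delta = b_i$ is one of the chosen exponents (more precisely, when $\delta \in \{b_1,\dots,b_k\}$). In that case, taking $f_i = a^x$ and $f_j = 0$ for $j\neq i$, the definition of $\lambda_\ovb$ gives $\lambda_\ovb(0,\dots,a^x,\dots,0) = a^x(1^0 - 1^{b_i}) = a^x - a^x\cdot 1^{b_i} = a^x - a^{x+b_i}$, using identity (1) from the list after the isomorphism $\tau$ (that $f^b = f\cdot 1^b$) together with the induced multiplication $a^x\cdot 1^{b_i} = (a\cdot 1)^{x+b_i} = a^{x+b_i}$. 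So $a^x - a^{x+b_i}\in\im(\lambda_\ovb)$ for each $i$.

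Next I would handle general $\delta\in N = \gp{b_1,\dots,b_k}$. Since $B$ is abelian and written additively, every $\delta\in N$ can be written as $\delta = \sum_{i=1}^k n_i b_i$ with $n_i\in\MZ$. The key point is a telescoping argument: for any $a\in A$, $x\in B$, and any $b\in\{b_1,\dots,b_k\}$ and $n\in\MZ$, the element $a^x - a^{x+nb}$ lies in $\im(\lambda_\ovb)$. For $n\ge 0$ this follows by writing
\[
a^x - a^{x+nb} = \sum_{j=0}^{n-1}\bigl(a^{x+jb} - a^{x+(j+1)b}\bigr),
\]
and each summand is of the form $(\text{shifted }a) \mapsto (\text{shifted }a) - (\text{shifted }a)\cdot 1^b$, hence in $\im(\lambda_\ovb)$ by the base case applied with $f_i = a^{x+jb}$; since $\im(\lambda_\ovb)$ is a subgroup, the sum is in it. For $n<0$ one runs the same telescoping in the other direction (or uses that $a^x - a^{x+nb} = -\bigl(a^{x+nb} - a^{x+nb-nb}\bigr)$ and $-n>0$). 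Then, chaining these one exponent at a time through a path $x \to x + n_1 b_1 \to x + n_1b_1 + n_2 b_2 \to \cdots \to x+\delta$, and again using that $\im(\lambda_\ovb)$ is a subgroup closed under addition, gives $a^x - a^{x+\delta}\in\im(\lambda_\ovb)$.

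I expect the main (though modest) obstacle to be bookkeeping: being careful that the $A$-coefficient stays fixed at $a$ through all the telescoping steps and that $A$ possibly having torsion causes no trouble (it does not, since we never divide), and making sure the two cases $n\ge 0$ and $n<0$ are both covered cleanly — the cleanest route is probably to first establish once and for all that $\im(\lambda_\ovb)$ is a subgroup containing $a^y - a^{y+b_i}$ for \emph{all} $y\in B$ and \emph{all} $i$, and then note that the set of $\delta\in B$ for which $a^x - a^{x+\delta}\in\im(\lambda_\ovb)$ for all $a,x$ is itself a subgroup of $B$ containing each $b_i$, hence contains $N$. This reformulation makes the telescoping essentially automatic and avoids an explicit induction on the word length of $\delta$ in the generators $b_1,\dots,b_k$.
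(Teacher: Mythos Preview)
Your proposal is correct and follows essentially the same approach as the paper: the paper inducts on $|\alpha_1|+\cdots+|\alpha_k|$ where $\delta=\sum\alpha_i b_i$, peeling off one generator at a time via the identity $a^x-a^{x+\delta}=(a^x-a^{x+\delta'})+a^{x+\delta'}(1^0-1^{b_1})$, which is exactly your telescoping written as an induction. Your alternative repackaging (observing that the set of admissible $\delta$ is a subgroup of $B$ containing each $b_i$) is a tidy way to avoid the explicit case split on the sign of $n$, but the underlying argument is the same.
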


\begin{proof}
Since $\delta\in N$, then $\delta=\alpha_1 b_1+\dots+\alpha_k b_k$
for some $\alpha_1,\dots,\alpha_k\in\MZ$.
We proceed using induction on $\alpha=|\alpha_1|+\dots+|\alpha_k|$.
If $\alpha=0$, then $\delta=0$ and there is nothing to prove.
Otherwise, without loss of generality, assume that $\alpha_1>0$. Let $\delta'=(\alpha_1-1) b_1+\dots+\alpha_k b_k=\delta-b_1$. Let $\alpha'$ be the corresponding $\alpha$-value for $\delta'$. Then we have
\begin{align*}
a^{x}-a^{x+\delta}
&=(a^{x}-a^{x+\delta'}) +(a^{x+\delta'} -a^{x+\delta})\\
&=
(a^{x}-a^{x+\delta'})+
a^{x+\delta'}(1^0-1^{b_1}),
\end{align*}
where $a^{x+\delta'}(1^0-1^{b_1})\in\im(\lambda_\ovb)$
and $a^{x}-a^{x+\delta'}$ is an element of the same type with $\alpha'<\alpha$.
Hence, by the induction assumption, $a^{x}-a^{x+\delta'} \in\im(\lambda_\ovb)$
and $a^{x}-a^{x+\delta}\in \im(\lambda)$.
\end{proof}

\begin{proposition}\label{cor:sequencemulti}
$\im(\lambda_{\ovb})\ =\ \ker(\pi_{\ovb})$.
\end{proposition}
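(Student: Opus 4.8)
We have already established the inclusion $\im(\lambda_{\ovb})\subseteq\ker(\pi_{\ovb})$, so it remains to prove the reverse inclusion $\ker(\pi_{\ovb})\subseteq\im(\lambda_{\ovb})$. Recall $N=\gp{b_1,\dots,b_k}$ and $\pi_{\ovb}=\varphi^\ast$ for the canonical epimorphism $\varphi\colon B\to B/N$. The plan is to take an arbitrary $f\in\ker(\pi_{\ovb})$, write $f=\sum_{j=1}^{\ell}a_j^{x_j}$ with the $x_j\in B$, and show directly that $f\in\im(\lambda_{\ovb})$ by successively modifying $f$ by elements of $\im(\lambda_{\ovb})$ until it becomes $0$.

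\textbf{Key steps.} First I would partition the support terms of $f$ according to their image under $\varphi$: group the terms $a_j^{x_j}$ into blocks sharing a common coset $x_j+N\in B/N$. The condition $\pi_{\ovb}(f)=0$ means that, within each such block indexed by a coset $C=x+N$, the sum of the corresponding scalars $a_j\in A$ (over all $j$ with $x_j\in C$) is $0$ in $A$. Second, I would handle one block at a time. Fix a block lying in coset $C$, pick one representative exponent $x$ in that block, and rewrite each term $a_j^{x_j}$ in the block as $a_j^{x}+\big(a_j^{x_j}-a_j^{x}\big)$. Since $x_j$ and $x$ lie in the same coset of $N$, we have $x_j=x+\delta_j$ for some $\delta_j\in N$, so Lemma \ref{le:lambda-image} gives $a_j^{x_j}-a_j^{x}=a_j^{x}-a_j^{x+\delta_j}\in\im(\lambda_{\ovb})$ (up to sign, which is fine since $\im(\lambda_{\ovb})$ is a subgroup). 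Therefore, modulo $\im(\lambda_{\ovb})$, the entire block is congruent to $\big(\sum_j a_j\big)^{x}$, which is $0^{x}=0$ by the kernel condition. Doing this for every block shows $f\in\im(\lambda_{\ovb})$, completing the proof. Combined with the earlier inclusion, this yields $\im(\lambda_{\ovb})=\ker(\pi_{\ovb})$.

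\textbf{Main obstacle.} The argument is essentially bookkeeping once Lemma \ref{le:lambda-image} is in hand, so there is no deep obstacle; the one point requiring a little care is the reduction within a single coset block. One must be sure that collapsing all exponents in a block to a single representative $x$ is legitimate term-by-term — i.e., that each individual difference $a_j^{x_j}-a_j^{x}$ (not just the block sum) lies in $\im(\lambda_{\ovb})$ — which is exactly what Lemma \ref{le:lambda-image} provides, using that $\delta_j=x_j-x\in N$. A secondary subtlety is purely notational: after merging, several terms $a_j^{x}$ with the same exponent $x$ must be combined using the ring-of-scalars addition in $A$, and one should note that the resulting coefficient $\sum_j a_j$ vanishing in $A$ is precisely the statement $\varphi^\ast(f)(x+N)=0$. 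No infinitude or effectivity issues arise since $\supp(f)$ is finite.
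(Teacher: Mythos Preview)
Your proposal is correct and rests on exactly the same key ingredient as the paper's proof: Lemma~\ref{le:lambda-image}, which says $a^{x}-a^{x+\delta}\in\im(\lambda_{\ovb})$ whenever $\delta\in N$. The only difference is organizational: the paper argues by induction on $|\supp(g)|$, at each step picking two support points $x,x+\delta$ in the same $N$-coset and merging them via $a^x+c^{x+\delta}=(a^x-a^{x+\delta})+(a+c)^{x+\delta}$, whereas you partition the support into coset blocks up front and collapse each block to a single representative in one stroke. Your version avoids the induction and is arguably tidier; the paper's version makes the support-size decrease explicit. Either way the content is the same.
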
 

\begin{proof}
We have seen that $\im(\lambda_{\ovb})\subseteq\ker(\pi_{\ovb})$. Let $g\in A^B$ be a nontrivial function such that $g\in\ker(\pi_\ovb)$. We now show that $\ker(\pi_{\ovb})\subseteq\im(\lambda_{\ovb})$ using induction on $|\supp(g)|$. For the base case, we use $|\supp(g)|=2$; the statement in this case holds by Lemma \ref{le:lambda-image}. Assume that the statement holds up to $|\supp(g)|=n$, and consider the case where $|\supp(g)|=n+1$. Since $g\in\ker(\pi_\ovb)$, then at least two elements from $\supp(g)$ collapse modulo $N$, i.e., for some $a,c\in A$, $x\in B$ and $\delta\in N$ we have
$$
g = a^x+c^{x+\delta}+g' = 
(a^x-a^{x+\delta}) + (c+a)^{x+\delta}+g'.
$$
By Lemma \ref{le:lambda-image},
$a^x-a^{x+\delta} \in \im(\lambda_\ovb) \subseteq \ker(\pi_{\ovb})$. Hence, $(c+a)^{x+\delta}+g' \in \ker(\pi_{\ovb})$. Since $|\supp((c+a)^{x+\delta}+g')|<|\supp(g)|=n+1$, the statement holds by mathematical induction.
\end{proof}

\begin{proposition}\label{pr:transform-components}
Consider $W=\sum_{i=1}^k f_i(1^0-1^{b_i}) + W'$, 
where $b_1,\dots,b_k\in B$ are constants,
$f_1,\dots,f_k,\ldots\in A^B$ are unknowns, and $W'$ is independent of $f_1,\dots,f_k$. Let $N=\gp{b_1,\dots,b_k}$. The following are equivalent:
\begin{itemize}
\item[(a)]
$W=0$ has a solution in $A^B$,
\item[(b)] 
$W=0$ has a solution in $A^{B/N}$,
\item[(c)]
$W'=0$ has a solution in $A^{B/N}$.
\end{itemize}
\end{proposition}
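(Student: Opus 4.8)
The plan is to establish the three easy relations $(a)\Rightarrow(b)$, $(b)\Leftrightarrow(c)$ directly, and then close the circle with the one substantive implication $(c)\Rightarrow(a)$. The tool that makes everything run is that $\pi_\ovb\colon A^B\to A^{B/N}$ is a \emph{surjective ring homomorphism} (it is the map induced on group rings by the quotient homomorphism $B\to B/N$, and it was already used as such in the computation preceding Lemma~\ref{le:lambda-image}). In particular $\pi_\ovb$ may be applied to both sides of any equation, it carries the constants of $W$ (resp. $W'$) to the constants of the corresponding equation interpreted over $A^{B/N}$, and it commutes with all ring operations occurring inside $W'$; so ``$W=0$ (resp. $W'=0$) has a solution over $A^{B/N}$'' simply means that the $\pi_\ovb$-image of the equation has a solution with the unknowns ranging over $A^{B/N}$.

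For the easy directions: since each $b_i\in N$, we have $\pi_\ovb(1^0-1^{b_i})=1^0-1^0=0$, so over $A^{B/N}$ the summand $\sum_{i=1}^k f_i(1^0-1^{b_i})$ contributes nothing no matter what values are assigned to $f_1,\dots,f_k$, and the $\pi_\ovb$-image of $W$ equals the $\pi_\ovb$-image of $W'$ term for term. Hence any solution of $W=0$ over $A^{B/N}$ restricts (on the unknowns other than $f_1,\dots,f_k$) to a solution of $W'=0$ over $A^{B/N}$, and conversely any solution of $W'=0$ over $A^{B/N}$ extends to one of $W=0$ over $A^{B/N}$ by assigning the $f_i$ arbitrarily; this gives $(b)\Leftrightarrow(c)$. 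For $(a)\Rightarrow(b)$, take a solution of $W=0$ in $A^B$ and apply $\pi_\ovb$ to each assigned value; since $\pi_\ovb$ is a ring homomorphism, the images form a solution of $W=0$ over $A^{B/N}$.

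The substantive step is $(c)\Rightarrow(a)$, and this is exactly where Proposition~\ref{cor:sequencemulti} is used. Given a solution of $W'=0$ over $A^{B/N}$, lift each value assigned to an unknown of $W'$ to an arbitrary preimage in $A^B$ under the surjection $\pi_\ovb$, and let $h\in A^B$ be the value taken by $W'$ at these lifted assignments. Applying $\pi_\ovb$ and using that the original assignment solved $W'=0$ over $A^{B/N}$ gives $\pi_\ovb(h)=0$, i.e. $h\in\ker(\pi_\ovb)$. By Proposition~\ref{cor:sequencemulti}, $\ker(\pi_\ovb)=\im(\lambda_\ovb)$, so $h=\sum_{i=1}^k f_i^{\circ}(1^0-1^{b_i})$ for some $f_1^{\circ},\dots,f_k^{\circ}\in A^B$. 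Now set $f_i:=-f_i^{\circ}$ for $i=1,\dots,k$ and keep the lifted values for all other unknowns; then $W$ evaluates to $-h+h=0$, which is the desired solution of $W=0$ in $A^B$. The only real obstacle is this last implication, and it reduces entirely to the identification $\ker(\pi_\ovb)=\im(\lambda_\ovb)$ supplied by Proposition~\ref{cor:sequencemulti}; the remainder is bookkeeping about the ring homomorphism $\pi_\ovb$.
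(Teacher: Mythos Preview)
Your proof is correct and follows essentially the same route as the paper's. The paper packages the substantive implication as $(b)\Rightarrow(a)$ rather than $(c)\Rightarrow(a)$, but since you and the paper both dispatch $(b)\Leftrightarrow(c)$ as trivial, the arguments coincide: lift a solution through the surjection $\pi_\ovb$, observe that the defect lies in $\ker(\pi_\ovb)=\im(\lambda_\ovb)$ by Proposition~\ref{cor:sequencemulti}, and absorb it into the $f_i$.
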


\begin{proof}
Equivalence of (b) and (c) is obvious. Below we prove 
equivalence of (a) and (b).

``(a)$\Rightarrow$(b)''
Use homomorphism $\pi_{\ovb}\colon A^B\to A^{B/N}$.

``(a)$\Leftarrow$(b)''
Suppose that $f_1',\ldots,f_k',\ldots$ satisfy $W=0$ in $A^{B/N}$.
Choose any $f_1^\ast,\ldots,f_k^\ast,\ldots\in A^B$ satisfying $f_i'=\pi_\ovb(f_i^\ast)$. 
Then $\pi_\ovb(W(f_1^\ast,\ldots,f_k^\ast,\ldots))=0$ and we have
$$
\underbrace{\sum f_i^\ast(1^0-1^{b_i})}
_{\mbox{\scalebox{0.8}{belongs to}} \im(\lambda_\ovb)}
+W'(\cdots)=
W(f_1^\ast,\ldots,f_k^\ast,\ldots)\in\ker(\pi_\ovb)\ \stackrel{2.3}{=}\ \im(\lambda_\ovb).
$$
Hence, $W'(\cdots) \in \im(\lambda_\ovb)$
and, since $W'$ is independent of $f_1,\dots,f_k$,
there exist $f_1,\ldots,f_k\in A^B$ satisfying
$\sum_{i=1}^k f_i(1^0-1^{b_i})=-W'(\cdots)$. 
Thus, $W=0$ has a solution in $A^B$.
\end{proof}

\subsection{Several useful formulae}
We frequently make use of the following formulae throughout the paper (where $x=(\delta_x,f_x)$, $y=(\delta_y,f_y)$, $z=(\delta_z,f_z)$, and $c=(\delta_c,f_c)$):
\begin{align}
    (\delta,f)^{-1} & = (-\delta,-f^{-\delta}), \\
    z^{-1}cz & = (\delta_c,f_z(1^0-1^{\delta_c})+f_c^{\delta_z}), \\
    [x,y] = x^{-1}y^{-1}xy & = (0,f_y(1^0-1^{\delta_x})-f_x(1^0-1^{\delta_y})).
\end{align}

\subsection{Cayley graph}\label{se:cayley}

Let $G$ be a group with a finite generating set $X$.
By $\Gamma_G$ we denote the Cayley graph for $G$ with respect to $X$.
Note that $G$ is a discrete metric space in which, for any $g,h\in G$,
the \emph{distance} from $g$ to $h$, $d(g,h)$, 
is the distance from $g$ to $h$ in $\Gamma_G$.
Let $S\subset G$. Then we define the \emph{diameter} of $S$, $\diam(S)$, as
\[
\diam(S) = \sup\Set{d\rb{g,h}}{g,h\in S}.
\]
Similarly, we define the diameter of a function 
$f=a_1^{-g_1}+\cdots+a_k^{-g_k}\in A^G$ as
\[
\diam(f) = \diam\rb{\left\{g_1,\ldots,g_k\right\}}.
\] 
The \emph{ball} of radius $r$ in the Cayley graph of $G$ is the set 
$$
\mathcal{B}_r=\Set{g\in G}{d(1,g)\le r}.
$$ 
Define the \emph{growth function} $\gr_X(n)\colon\MN\to\MN$ of $G$ 
as $\gr_X(n) = |\mathcal{B}_n|$.
The \emph{geodesic length} $|g|$ of $g\in G$ is the minimal $k$ such that $g\in \mathcal{B}_k$.
It is a well-known fact that the growth function of an abelian group is bounded by a polynomial.

\subsection{The rank of an abelian group}
\label{se:rank}

Let $B$ be a finitely generated abelian group.
In this paper by the \emph{rank} of a finitely generated abelian group $B$ we mean the minimum number of generators for $B$.
Suppose that $B$ is given by the following data:
\begin{itemize}
\item 
a set of generators $b_1,\ldots,b_n$ for $B$;
\item 
a set of relations $\ovr_1,\ldots,\ovr_m \in \MZ^n$ for $B$.
\end{itemize}
The problem of finding $\rank(B)$ can be 
solved in polynomial time by
constructing a matrix of the relations 
$\ovr_1,\ldots,\ovr_m$
and computing its Smith normal form
(see \cite{Storjohann:1996}), which gives a presentation for $B$ as a direct product of cyclic groups
$$
B\simeq 
\MZ_{\alpha_1}\times 
\dots
\times 
\MZ_{\alpha_y}\times 
\MZ^{k-y},
$$
where $\alpha_1\mid\alpha_2\mid \dots \mid\alpha_y$. 
The rank of $B$ is the number of nontrivial factors in the product above, i.e.,
$\rank(B) = k-\max\{i\mid \alpha_i=1 \}$.

Let $S\le B$, where $B$ is given as above and generators for $S$, viewed as the corresponding linear combinations of $b_1,\ldots,b_n$, are given by $\ovs_1,\ldots,\ovs_k \in\MZ^n$. We can then efficiently solve a similar problem of finding the rank of $S$. For that it is sufficient to find the space of relations for $S$. Consider a subgroup of $\MZ^{n+k}$ generated by concatenated vectors
$$
E=\gp{
\ovr_1 \ovo, \dots, \ovr_m \ovo,
\ovs_1 \ove_1, \dots, \ovs_k \ove_k},
$$
where $\{\ove_1,\dots,\ove_k\}$ is the standard basis for $\MZ^k$. 
Notice that 
$$
(t_1,\ldots,t_k) \mbox{ is a relation for } \ovs_1,\ldots,\ovs_k
\ \ \Leftrightarrow\ \ 
(\underbrace{0,\dots,0}_{n},t_1,\ldots,t_k)\in E.
$$
The set of element $(0,\dots,0,t_1,\ldots,t_k)\in E$
can be captured by solving the following linear vector-equation:
$$
\alpha_1\ovr_1 + \dots + \alpha_m\ovr_m+
\beta_1\ovs_1 + \dots + \beta_k\ovs_k = \ovo
$$
for $\alpha_1,\dots,\alpha_m,\beta_,\dots,\beta_k\in\MZ$.
We can find a basis for the space of its solutions 
and then truncate the basis vectors by
deleting the first $m$ components (the $\alpha$-part).
The result generates the space of relations for $S$.
Once relations for $\ovs_1,\ldots,\ovs_k$ are found, we construct 
a matrix of those relations and find $\rank(S)$ 
the same way as for $B$.
This can be done in polynomial time.

\subsection{Lattices}
We now present several definitions and lemmas related to integer lattices.
Recall that an integer lattice $\CL$ is a subgroup of $\MZ^n$.
Algebraically $\CL\simeq \MZ^k$, where $k\le n$ is called the \emph{rank} of $\CL$.
Associated with a given lattice $\CL$ is a sequence of positive real values
$\lambda_1\le\cdots\le\lambda_n$, 
called the \textit{successive minima} of $\CL$, defined as
$$
\lambda_i(\CL)=\min\Set{r}{\dim(\spn(\mathcal{B}_r\cap\CL))\geq i}.
$$
Equivalently, $\lambda_i(\CL)$ is the radius of the smallest ball in $\CL$ containing $i$ linearly independent lattice vectors. The values of $\lambda_i(\CL)$ are independent of the choice of basis for $\CL$.  

The next lemma follows immediately from the definition of successive minima.

\begin{lemma}\label{le:lambda_bound}
Suppose that $N=\gp{s_1,\ldots,s_n}\le \MZ^m$, where $\|s_i\|\le k$ and $\rank(N)=y$. Then $\lambda_i(N)\le k$ for every $i=1,\ldots,y$.
\end{lemma}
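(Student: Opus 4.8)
The plan is to unwind the definitions of successive minima and rank, and exhibit, for each $i\le y$, a collection of $i$ linearly independent lattice vectors all lying in the ball $\mathcal{B}_k$. First I would note that since $\rank(N)=y$, the vectors $s_1,\ldots,s_n$ span (over $\MQ$) a subspace of dimension exactly $y$; hence among $s_1,\ldots,s_n$ we may select $y$ of them, say after reindexing $s_{j_1},\ldots,s_{j_y}$, that are linearly independent. Each of these generators satisfies $\|s_{j_\ell}\|\le k$ by hypothesis, so $s_{j_\ell}\in\mathcal{B}_k\cap N$ for every $\ell$.

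Then for a fixed $i$ with $1\le i\le y$, the vectors $s_{j_1},\ldots,s_{j_i}$ are $i$ linearly independent elements of $\mathcal{B}_k\cap N$, so $\dim(\spn(\mathcal{B}_k\cap N))\ge i$. By the definition
$$
\lambda_i(N)=\min\Set{r}{\dim(\spn(\mathcal{B}_r\cap N))\geq i},
$$
it follows that $\lambda_i(N)\le k$, since $r=k$ already achieves the required dimension bound. This holds for every $i=1,\ldots,y$, which is exactly the claim.

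The only point requiring a little care is the selection of a linearly independent subfamily of the generating set whose size equals the rank. This is the standard fact that a spanning set of a $\MQ$-vector space of dimension $y$ contains a basis of size $y$; applied to $\spn_\MQ(N)\subseteq\MQ^m$, which has dimension $y=\rank(N)$ since $N\simeq\MZ^y$ as an abelian group. I expect this step to be essentially immediate, so there is no real obstacle here — the lemma is a direct consequence of the definitions, as the statement itself indicates.
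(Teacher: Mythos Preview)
Your proof is correct and is essentially the approach the paper intends: the paper gives no proof beyond noting that the lemma ``follows immediately from the definition of successive minima,'' and what you have written is exactly that immediate argument spelled out in detail.
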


From the Gram--Schmidt orthogonalization process, define the orthogonalized vectors $\bv_i^\ast$ and the $i$th orthogonal projection $\pi_i$ by
$$
\bv_i^\ast=\bv_i-\sum_{j<i}\frac{\gp{\bv_i,\bv_j^\ast}}{\|\bv_j^\ast\|^2}\bv_j^\ast, \ \ \ \ \pi_i(\bv)=\sum_{j=i}^n \frac{\gp{\bv,\bv_j^\ast}}{\|\bv_j^\ast\|^2}\bv_j^\ast.
$$
Clearly, the Gram--Schmidt orthogonal vectors can be represented as $\{\bv_1^\ast,\ldots,\bv_m^\ast\}$, where $\bv_i^\ast=\pi_i(\bv_i)$.

\begin{definition}[{{\cite[Definition 2.4]{Micciancio-Goldwasser:2002}}}]
A basis $\{\bv_1,\ldots,\bv_m\}$ is \textit{LLL-reduced} with factor $\delta$ if the following two conditions are satisfied:
\begin{enumerate}
    \item The basis $\{\bv_1,\ldots,\bv_m\}$ is size-reduced, i.e. if
    $
    \left|\frac{\gp{\bv_i,\bv_j^\ast}}{\|\bv_j^\ast\|^2}\right|\le\frac{1}{2}
    $
    for all $i>j$.
    \item For any consecutive $\bv_i,\bv_{i+1}$ in the basis,
    $
    \delta\|\bv_i^\ast\|^2\le\|\pi_i(\bv_{i+1})\|^2.
    $
\end{enumerate}
\end{definition}

An algorithm to compute LLL reduced bases can be found in 
\cite[Section 2.2]{Micciancio-Goldwasser:2002}.
For our purposes, we use $\delta=\frac{3}{4}$.

\begin{theorem}[{{\cite[Chapter 2, Theorem 9]{Nguyen-Vallee:2010}}}]\label{th:nguyen}
Suppose that $\{\bv_1,\ldots,\bv_y\}$ is an LLL-reduced basis with factor $\delta=\frac{3}{4}$ of a lattice $\CL$ in $\MR^n$. Then
$
\|\bv_i\|\le \sqrt{2^{y-1}}\lambda_i(\CL)
$
for all $i=1,\ldots,y$.
\end{theorem}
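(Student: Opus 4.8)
The plan is to derive the inequality from the two standard structural properties of a $\delta$-LLL-reduced basis with $\delta=\tfrac34$: first, a bound on how fast the Gram--Schmidt norms $\|\bv_j^\ast\|$ can shrink; second, the classical lower bound on the successive minima in terms of those same norms. For the first, write $\mu_{i,j}=\gp{\bv_i,\bv_j^\ast}/\|\bv_j^\ast\|^2$; the Lov\'asz condition $\delta\|\bv_i^\ast\|^2\le\|\pi_i(\bv_{i+1})\|^2$ together with the orthogonal splitting $\pi_i(\bv_{i+1})=\bv_{i+1}^\ast+\mu_{i+1,i}\bv_i^\ast$ and size-reducedness $|\mu_{i+1,i}|\le\tfrac12$ yields $(\delta-\tfrac14)\|\bv_i^\ast\|^2\le\|\bv_{i+1}^\ast\|^2$, i.e.\ $\|\bv_i^\ast\|^2\le2\|\bv_{i+1}^\ast\|^2$ once $\delta=\tfrac34$; iterating gives $\|\bv_j^\ast\|^2\le 2^{i-j}\|\bv_i^\ast\|^2$ whenever $j\le i$.

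Next I would bound $\|\bv_i\|$ by $\|\bv_i^\ast\|$: from $\bv_i=\bv_i^\ast+\sum_{j<i}\mu_{i,j}\bv_j^\ast$, orthogonality of the $\bv_j^\ast$ and $|\mu_{i,j}|\le\tfrac12$ give $\|\bv_i\|^2\le\|\bv_i^\ast\|^2+\tfrac14\sum_{j<i}\|\bv_j^\ast\|^2$, which by the decay bound and a geometric sum is at most $(2^{i-2}+\tfrac12)\|\bv_i^\ast\|^2$. The key remaining ingredient relates $\|\bv_i^\ast\|$ to $\lambda_i(\CL)$. Choose linearly independent $\bt_1,\dots,\bt_i\in\CL$ with $\|\bt_k\|\le\lambda_i(\CL)$ and expand $\bt_k=\sum_j c_{k,j}\bv_j$ with $c_{k,j}\in\MZ$; let $j(k)$ be the largest index with $c_{k,j(k)}\ne0$. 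Since $\bt_1,\dots,\bt_i$ are linearly independent they cannot all lie in $\spn(\bv_1,\dots,\bv_{i-1})$, so $j(k_0)\ge i$ for some $k_0$. For that $k_0$ one has $\pi_{j(k_0)}(\bt_{k_0})=c_{k_0,j(k_0)}\bv_{j(k_0)}^\ast$, hence $\lambda_i(\CL)\ge\|\bt_{k_0}\|\ge\|\pi_{j(k_0)}(\bt_{k_0})\|=|c_{k_0,j(k_0)}|\,\|\bv_{j(k_0)}^\ast\|\ge\|\bv_{j(k_0)}^\ast\|$. Combining with the decay bound and $j(k_0)\le y$ gives $\|\bv_i^\ast\|^2\le 2^{j(k_0)-i}\|\bv_{j(k_0)}^\ast\|^2\le 2^{y-i}\lambda_i(\CL)^2$.

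Finally I would combine the two estimates: $\|\bv_i\|^2\le(2^{i-2}+\tfrac12)\,2^{y-i}\lambda_i(\CL)^2=(2^{y-2}+2^{y-i-1})\lambda_i(\CL)^2\le 2^{y-1}\lambda_i(\CL)^2$ for every $i\ge1$, and taking square roots gives $\|\bv_i\|\le\sqrt{2^{y-1}}\,\lambda_i(\CL)$, as required.

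The steps involving the Gram--Schmidt data (the decay bound, the bound on $\|\bv_i\|$, and the closing geometric-sum arithmetic) are routine. The one step genuinely worth care is the lower bound $\|\bv_{j(k_0)}^\ast\|\le\lambda_i(\CL)$: the temptation is to try to match each of $\bt_1,\dots,\bt_i$ to a distinct basis index (which would require a Hall-type/exchange argument), whereas in fact only the single existence statement ``some leading index $j(k_0)$ is $\ge i$'' is needed, and this already follows from linear independence by a dimension count.
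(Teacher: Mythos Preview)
Your argument is correct and is the standard proof of this bound. Note, however, that the paper does not actually prove this statement: it is quoted verbatim from \cite[Chapter~2, Theorem~9]{Nguyen-Vallee:2010} and used as a black box, so there is no in-paper proof to compare against. What you have written is essentially the argument one finds in that reference (and in most treatments of LLL): the Lov\'asz condition plus size-reduction gives the decay $\|\bv_j^\ast\|^2\le 2^{i-j}\|\bv_i^\ast\|^2$, the expansion of $\bv_i$ in the orthogonal system bounds $\|\bv_i\|^2$ by $(2^{i-2}+\tfrac12)\|\bv_i^\ast\|^2$, and the ``leading index'' trick on $i$ independent short vectors yields $\|\bv_i^\ast\|^2\le 2^{y-i}\lambda_i(\CL)^2$. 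Your final remark is also well taken: only the existence of a single $k_0$ with $j(k_0)\ge i$ is needed, and this is an immediate dimension count.
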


This leads to the following two natural corollaries.

\begin{corollary}\label{co:cor_to_nguyen}
Suppose that $N=\gp{s_1,\ldots,s_n}\le \MZ^m$, where $\|s_i\|\le k$ and $\rank(N)=y$. Then $N$ has an LLL-reduced basis $\{t_1,\ldots,t_y\}$ with factor $\delta=\frac{3}{4}$, satisfying
\begin{equation}\label{eq:norm_bound_k}
   \|t_i\|\le \sqrt{2^{y-1}}k
\end{equation}
for all $i=1,\ldots,y$.
\end{corollary}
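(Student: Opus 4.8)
The plan is to chain together the two cited results above: the successive-minima bound of Lemma \ref{le:lambda_bound} and the Nguyen--Vall\'ee estimate of Theorem \ref{th:nguyen}. First I would observe that since $\rank(N)=y$, the lattice $N$ is isomorphic to $\MZ^y$, so every basis of $N$ consists of exactly $y$ vectors. The given tuple $s_1,\ldots,s_n$ is only a generating set and may be redundant (indeed $n$ may exceed $y$), so the first step is to extract an honest basis $\{b_1,\ldots,b_y\}$ of $N=\gp{s_1,\ldots,s_n}$ from the $s_i$; this can be done in polynomial time by computing a Hermite normal form, or by any standard integer-lattice basis-extraction routine, and it leaves the lattice $N$ itself unchanged.

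Next I would run the LLL algorithm of \cite[Section 2.2]{Micciancio-Goldwasser:2002} with factor $\delta=\tfrac34$ on the basis $\{b_1,\ldots,b_y\}$. Since the LLL procedure outputs a basis of the same lattice, this produces an LLL-reduced basis $\{t_1,\ldots,t_y\}$ of $N$, again with exactly $y$ vectors. Theorem \ref{th:nguyen} now applies verbatim to this basis and yields $\|t_i\|\le\sqrt{2^{\,y-1}}\,\lambda_i(N)$ for all $i=1,\ldots,y$. Finally, since $N=\gp{s_1,\ldots,s_n}$ with $\|s_i\|\le k$ and $\rank(N)=y$, Lemma \ref{le:lambda_bound} gives $\lambda_i(N)\le k$ for every $i=1,\ldots,y$, and substituting this into the previous inequality gives $\|t_i\|\le\sqrt{2^{\,y-1}}\,k$, which is exactly \eqref{eq:norm_bound_k}.

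I do not expect a genuine obstacle here; the corollary really is an immediate composition of the two preceding results. The one point requiring minor care is the basis-extraction step: both Theorem \ref{th:nguyen} and the LLL algorithm are phrased in terms of a basis rather than an arbitrary generating set, so one must make sure the extracted basis generates the same lattice $N$, which guarantees that the (basis-independent) successive minima appearing in Lemma \ref{le:lambda_bound} and Theorem \ref{th:nguyen} are the $\lambda_i(N)$ of the original lattice. Once that is noted, the argument is a one-line substitution.
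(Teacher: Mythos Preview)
Your proposal is correct and follows essentially the same route as the paper: extract a basis of $N$, LLL-reduce it with factor $\delta=\tfrac34$, and then chain Theorem~\ref{th:nguyen} with Lemma~\ref{le:lambda_bound} to get \eqref{eq:norm_bound_k}. The paper's argument is slightly terser (it simply starts from ``let $\{t_1',\ldots,t_y'\}$ be a basis for $N$'' without discussing how to obtain it), but your added remark that one must pass from the generating set $\{s_i\}$ to an honest basis before applying LLL is a reasonable clarification and changes nothing mathematically.
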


\begin{proof}
Let $\{t_1',\ldots,t_y'\}$ be a basis for $N$. Clearly, $N$ is a sublattice of $\MZ^m$, so we can run the LLL lattice basis reduction algorithm on $\{t_1',\ldots,t_y'\}$ to obtain an LLL-reduced basis $\{t_1,\ldots,t_y\}$ with factor $\delta=\frac{3}{4}$. It then follows that, for every $i\in\{1,\ldots,y\}$,
$$
\|t_i\|\stackrel{\ref{th:nguyen}}{\le} \sqrt{2^{y-1}}\lambda_i(\CL) \stackrel{\ref{le:lambda_bound}}{\le} \sqrt{2^{y-1}}k.
$$
\end{proof}

\begin{corollary}\label{cor:bounded_gen_set}
Suppose that $B$ is a rank $m$ abelian group, and let 
$N=\gp{s_1,\ldots,s_n}\le B$, where $\|s_i\|\le k$
and $\rank(N)=y$. Then $N$ has a generating set
$\{t_1,\dots,t_y\}$ satisfying $\eqref{eq:norm_bound_k}$.
\end{corollary}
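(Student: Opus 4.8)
The plan is to bootstrap from Corollary \ref{co:cor_to_nguyen}, which is precisely this statement in the special case $B=\MZ^m$, by pulling the whole situation back to a free abelian group. Since $\rank(B)=m$, fix an epimorphism $\psi\colon\MZ^m\to B$ that carries the standard basis $e_1,\dots,e_m$ to a minimal generating set of $B$, and set $R=\ker\psi$. Because the generators of $B$ are exactly the images $\psi(e_i)$, a geodesic word representing $s_i$ in $B$ reads verbatim as a word of the same length over the standard basis of $\MZ^m$, so each $s_i$ has a preimage $\ovs_i\in\MZ^m$ with $\|\ovs_i\|\le\|s_i\|\le k$; dually, $\psi$ never increases word length, i.e. $\|\psi(v)\|\le\|v\|$ for all $v\in\MZ^m$.

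The main step is to produce a sublattice of $\MZ^m$ of the \emph{correct rank}. Put $M_0=\gp{\ovs_1,\dots,\ovs_n}\le\MZ^m$, so $\psi(M_0)=N$ and $M_0\cap R=\ker(\psi|_{M_0})$. By Lemma \ref{le:lambda_bound} we have $\lambda_i(M_0)\le k$ for $i$ up to $\rank_\MZ(M_0)$, so $M_0$ has plenty of short vectors; using the structure theorem for $N$ (write $\rank(N)=\rank_\MZ(N/T(N))+\mathrm{min\text{-}gens}(T(N))$) together with the inclusion $T(N)\le T(B)$, I would carve out a $\MZ$-independent family of exactly $y=\rank(N)$ short elements of $M_0$ that still generate $M_0$ modulo $R$: take $\rank_\MZ(N/T(N))$ of them mapping onto a basis of $N/T(N)$, and $\mathrm{min\text{-}gens}(T(N))$ more, chosen inside $M_0\cap\psi^{-1}(T(N))$, mapping onto a minimal generating set of the torsion subgroup. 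This gives a rank-$y$ sublattice $M\le\MZ^m$, generated by vectors of norm $\le k$, with $\psi(M)=N$. Then I apply Corollary \ref{co:cor_to_nguyen} to $M$ to get an LLL-reduced basis $t_1',\dots,t_y'$ of $M$ with $\|t_i'\|\le\sqrt{2^{y-1}}\,k$, and set $t_i=\psi(t_i')$; these satisfy $\gp{t_1,\dots,t_y}=\psi(M)=N$ and $\|t_i\|=\|\psi(t_i')\|\le\|t_i'\|\le\sqrt{2^{y-1}}\,k$, which is \eqref{eq:norm_bound_k}.

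The delicate point — the one I expect to be the real obstacle — is exactly that middle step: a careless lift only gives a lattice of rank $\rank_\MZ(M_0)$, which can strictly exceed $\rank(N)$ (for instance when two $\MZ$-dependent elements of $B$ are lifted to $\MZ$-independent vectors, or whenever $N$ has torsion), in which case Corollary \ref{co:cor_to_nguyen} returns too many generators and the wrong exponent $2^{\rank_\MZ(M_0)-1}$. So the crux is to argue that one can always exhibit a rank-exactly-$y$, short-generated sublattice of $\MZ^m$ with image $N$; everything else is bookkeeping plus a single invocation of the preceding corollary and the length-non-increasing property of $\psi$.
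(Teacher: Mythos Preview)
Your overall strategy---lift the generators to $\MZ^m$ along an epimorphism $\psi\colon\MZ^m\to B$, apply Corollary~\ref{co:cor_to_nguyen} to the lifted lattice, and push the resulting short basis back down using that $\psi$ does not increase word length---is exactly what the paper does. The paper writes $B\simeq\MZ_{\alpha_1}\times\cdots\times\MZ_{\alpha_m}$, lifts each $s_i$ to a canonical representative $\gamma^{-1}(s_i)\in\MZ^m$ of the same norm, sets $N^\ast=\gp{\gamma^{-1}(s_1),\ldots,\gamma^{-1}(s_n)}$, invokes Corollary~\ref{co:cor_to_nguyen} on $N^\ast$, and projects back coordinatewise.

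The difference is precisely the point you flag as delicate. The paper carries out no rank-correction step at all: it simply writes ``$\rank(N^\ast)=y$'' and proceeds. Your concern that the lifted lattice can have strictly larger $\MZ$-rank than $\rank(N)$ is well-founded (take $B=\MZ\times\MZ_2$ with $s_1=(1,1)$, $s_2=(2,0)$: then $N=\gp{(1,1)}$ has rank~$1$ while the lifted lattice in $\MZ^2$ has rank~$2$), so in effect you are being more careful here than the paper itself. That said, your proposed remedy is still only a sketch: you have not explained why, among elements of $M_0$ of norm at most $k$, one can always find $y$ that are simultaneously $\MZ$-independent in $\MZ^m$ and whose images generate all of $N$. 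The structure-theorem decomposition of $N$ you invoke tells you what the \emph{images} should look like, but it does not by itself supply short preimages with the required independence.

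For the record, in the one place the corollary is actually used (Theorem~\ref{th:bounded-m}) the bound is immediately relaxed to $\|t_i\|\le 2^{\rank(B)/2}\size(I)$, and that weaker statement follows from the lift--LLL--project argument with $\rank(N^\ast)\le m=\rank(B)$ in place of $y$, so no rank correction is needed for the application.
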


\begin{proof}
By the fundamental theorem of finitely generated abelian groups, $B\simeq \MZ_{\alpha_1}\times\cdots\times\MZ_{\alpha_m}$, where trailing $\alpha_i$'s can be infinite. Thus every element of $B$ is defined by an $m$-tuple of integers. Let $\gamma\colon\MZ^m\to B$ and  $\gamma_i\colon\MZ\to\MZ_{\alpha_i}$ be canonical epimorphisms. For an element $s_i\in B$, define a preimage $\gamma^{-1}(s_i)\in\MZ^m$ as the embedding of $s_i$ in $\MZ^m$. Since $N\le B$, a preimage of $N$ in $\MZ^m$ is $N^\ast=\gp{\gamma^{-1}(s_1),\ldots,\gamma^{-1}(s_n)}$. Since $\|s_i\|=\|\gamma^{-1}(s_i)\|$,
$$
\|s_i\|\le k \ \Rightarrow \   \|\gamma^{-1}(s_i)\| \le k,
$$
and since $\|\gamma^{-1}(s_i)\| \le k$ and $\rank(N^\ast)=y$, by Corollary \ref{co:cor_to_nguyen}, $N$ has an LLL-reduced basis $\{t_1^\ast,\ldots,t_y^\ast\}$ with factor $\delta=\frac{3}{4}$ satisfying \eqref{eq:norm_bound_k}.
Mapping this bounded basis to $B$, we obtain $\{t_1,\ldots,t_y\}$, where $t_i=\gamma(t_i^\ast)$ as illustrated below for $t^\ast=(u_1^\ast,\ldots,u_m^\ast)$:
$$
\begin{array}{rccc}
t^\ast= & \big(u_1^\ast\in\MZ, & \cdots & ,u_m^\ast\in\MZ\big) \\[10pt]
& \downarrow\gamma_1 &  & \downarrow\gamma_m \\[10pt]
t = & \big(u_1\in\MZ_{\alpha_1}, & \cdots & ,u_m\in\MZ_{\alpha_m}\big).
\end{array}
$$
Clearly, components of $t_i^\ast$ can only decrease in norm, i.e. $\|u_i\|\le\|u_i^\ast\|$, and it follows that
$$
\|t_i\|\le\|t_i^\ast\|\le \sqrt{2^{y-1}}k,
$$
and we are done.
\end{proof}

\subsection{The $3$-partition problem}
For a multiset $S=\{s_1,\ldots,s_{3k}\}$ of $3k$ integers, let 
$$
T_S = \frac{1}{k}\sum_{i=1}^{3k}s_i.
$$

\begin{algproblem}
 \problemtitle{\textsc{$3$-partition problem} $(\TPART)$}
  \probleminput{Set $S=\{s_1,\ldots,s_{3k}\}$ with $T_S/4 < t_i < T_S/2$.}
  \problemquestion{Is there a partition of $S$ into $k$ triples, each of which sums to $T_S$?
  }
\end{algproblem}

This problem is known to be \emph{strongly} $\NP$-complete, which means that it remains $\NP$-complete even when the integers in $S$ are bounded above by a polynomial in $m=3k$. In other words, the problem remains $\NP$-complete even when the input is represented in unary.\footnote{A thorough treatment of this problem may be found in \cite{Garey-Johnson:1979}.}


\subsection{Zero-one equation problem}

A vector $\overline{v} \in \MZ^n$ is called a \emph{zero-one} vector
if each entry in $\overline{v}$ is either $0$ or $1$.
Similarly, a square matrix  $M\in \Mat(n,\MZ)$ is called a \emph{zero-one}
matrix if each entry in $M$ is either $0$ or $1$.
Denote by $1_n$ the vector $(1,\ldots,1) \in \MZ^n$.
The following problem is $\NP$-complete  (see \cite{Dasgupta-Papadimitriou-Vazirani:2006}).

\begin{algproblem}
 \problemtitle{\textsc{Zero-one equation problem} $(\ZOE)$}
  \probleminput{Zero-one matrix $M \in \Mat(n,\MZ)$.}
  \problemquestion{Is there a zero-one vector $\ovx \in \MZ^n$ satisfying $M\cdot \ovx = 1_n$?
  }
\end{algproblem}


\section{Quotient sum problem: definition}
\label{se:QSP}

\subsection{Notation and conventions}
We first introduce some convenient notation and conventions for describing certain sequences of variables and constants.

\begin{itemize}
    \item A sequence of functions $f_1,\ldots,f_m\in A^B$ is denoted by $\ovf$. A sequence of functions $f_{x_1},\ldots,f_{x_g}\in A^B$ is denoted by $\ovf_x$. Unless otherwise specified, we always assume that the number of elements in a sequence $\ovf$ or $\ovf_x$ is clear from the context.
    \item When we say that a sequence of functions $\ovf$ is in $A^B$, denoted by $\ovf\in A^B$, we mean that $f_i\in A^B$ for each component $f_i$. Similarly, when we say that a sequence of elements $\ovdelta\in B$, we mean that $\delta_i\in B$ for each component $\delta_i$.
    \item For a sequence $\ovf=(f_1,\ldots,f_m)$ of functions from $A^B$ and a sequence $\ovdelta=(\delta_1,\ldots,\delta_m)$ of elements from $B$ we define the sequence
    $$
    \ovfdelta=\bigl(f_1^{\delta_1},\ldots,f_m^{\delta_m}\bigr).
    $$
    \item Let $g\colon A^B\rightarrow Z$, where $Z$ is any arbitrary set. Then if $\ovf=(f_1,\ldots,f_m)$ is a sequence of functions, $(g(f_1),\ldots,g(f_m))$ is denoted by $g(\ovf)$.
    \item For sequences $\ovdelta=(\delta_1,\ldots,\delta_m)$ and $\ovdelta'=(\delta_1',\ldots,\delta_m')$ we write $\ovdelta\equiv \ovdelta' \bmod N$ if $\delta_i+N=\delta_i'+N$ for each $i=1,\ldots,m$.
    \item When we say that $\ovdelta_x,\ovdelta_y,\ovdelta_z$ satisfy some condition, we mean that their elements satisfy that condition.
\end{itemize}

For an arbitrary group $G$, we use $=_G$ to denote ``is equal to in $G$'' or ``holds in $G$''.

\subsection{The quotient sum problem}

Here we define a decision problem to which we subsequently 
reduce the Diophantine problem for orientable quadratic equations.
\par\addvspace{.5\baselineskip}

\noindent\begin{minipage}{\textwidth}
  \par\addvspace{.5\baselineskip}
  \noindent
  \begin{tabularx}{\textwidth}{@{\hspace{0em}} l X c}
    \hline\hline
    \end{tabularx}\vspace*{-1.5\baselineskip}
\begin{algproblemnl}
 \problemtitle{\textsc{Quotient sum problem} $(\QSP)$}
  \probleminput{
  Tuple $(A,B,\ovf,h)$, where $A$ and $B$ are finitely generated abelian groups, $\ovf\in A^B$, and $h\in\MN$.}
  \problemquestion{Is there a sequence $\ovdelta\in B$ and a subgroup $N\le B$ such that
  }
\end{algproblemnl}
\vspace*{-\baselineskip}
\begin{equation}\label{eq:qsp_shift}
    \sum_{i=1}^m f_i^{\delta_i}=_{A^{B/N}}0 \
    \mbox{ and } \ \rank(N)\le h?
\end{equation}
\noindent
  \begin{tabularx}{\textwidth}{@{\hspace{0em}} l X c}
    \hline\hline
    \end{tabularx}
\end{minipage}

\par\addvspace{.5\baselineskip}
Some details of data representation are in order. 
A finitely generated abelian group $A$ is given by a set of generators
(symbols) $a_1,\ldots,a_n$ and a finite set of relations $R_A\subseteq \MZ^n$.
Elements of $A$ are defined by vectors $(\alpha_1,\dots,\alpha_n)\in\MZ^n$ 
using the formula
$$a=\alpha_1a_1+\dots+\alpha_n a_n,$$
which is viewed as a composition
of generators (and their inverses), 
i.e. $\alpha_i$'s are written in unary.
The size of a given representation $(\alpha_1,\dots,\alpha_n)$ of $a$ is
$$
\size((\alpha_1,\dots,\alpha_n)) = |\alpha_1|+\dots+|\alpha_n|.
$$
Presentations of elements of $B$ and their sizes are defined analogously. 

A function $f\in A^B$ with $\supp(f)=\{b_1,\ldots,b_m\}$ and $f(b_i)=a_i$ 
is defined by a finite set of pairs $\{(a_1,b_1),\ldots,(a_m,b_m)\}$. 
The size of this representation of $f$ is
$$
\size(f) = \sum_{i=1}^m (\size(a_i) + \size(b_i)).
$$

Given an instance $I=(A,B,\ovf,h)$ of $\QSP$, we can define $\size(I)$ as
$$
\size(I)=\size(A)+\size(B)+\size(\ovf)+h,
$$
where $\size(A)$ (or $\size(B)$) is the size of the presentation of $A$ (or $B$) and $\size(\ovf)=\sum \size(f_i)$.

\subsection{$\mathsf{QSP}$ as a function}
By definition, $\QSP$ involves multiple parameters, namely $A$, $B$, $\ovf$, and $h$. To study the problem in more detail and explore the interactions
between these parameters, it will be convenient to define $\QSP$ as a function
$$
(A,B,\ovf,h)\ \stackrel{\QSP}{\longmapsto}\ \{0,1\}.
$$
That allows us to use the following notation for versions of $\QSP$:
\begin{itemize}
\item 
$\QSP(\cdot,\cdot,\cdot,\cdot)$ is the most general problem
in which no parameters are fixed.
\item 
$\QSP(A,B,\cdot,\cdot)$ is the 
problem with fixed groups $A$ and $B$.
\item 
$\QSP(A,B,\cdot,0)$
is the problem with fixed $A$ and $B$ and $h=0$.
\item 
$\QSP(A,B,\cdot_{|\ovf|\le k},\cdot)$
is the problem with fixed $A$ and $B$ and with the length $|\ovf|$
(the number of components) of $\ovf$ bounded by a fixed $k$.
\end{itemize}
These problems are important for our study of orientable
equations in wreath products.

\begin{remark}
$\QSP(\mathbbm{1},\cdot,\cdot,\cdot)$ (where $\mathbbm{1}$ is the trivial group) is decidable in constant time because
every instance of $\QSP(\mathbbm{1},\cdot,\cdot,\cdot)$ is positive.
Therefore, from now on we assume that $A\not\simeq \mathbbm{1}$.
\end{remark}

\subsection{$\mathsf{QSP}$: motivation}

Consider an orientable quadratic equation $\CE$ of type
\eqref{eq:orientable} of genus $g$ over $A\wr B$, 
where $x_i$, $y_i$, and $z_i$ are variables and $c_i$ are constants. 
Below we show that the decidability of $\CE$ can be reduced
to an instance of $\QSP$.

\begin{lemma}\label{le:orientable-to-A_B-simpl}
Let $c_i=(\delta_{c_i},f_{c_i})$. Then $x_i=(\delta_{x_i},f_{x_i})$, $y_i=(\delta_{y_i},f_{y_i})$, and $z_i=(\delta_{z_i},f_{z_i})$ satisfy \eqref{eq:orientable} if and only if $\sum_{i=1}^m\delta_{c_i}=0$ and $f_{x_i}$, $f_{y_i}$, $f_{z_i}'=f_{z_i}1^{\sum_{j=i+1}^m \delta_{c_j}}$, $\delta_{x_i}$, $\delta_{y_i}$, $\delta_{z_i}'=\delta_{z_i}+\sum_{j=i+1}^m\delta_{c_j}$ satisfy
    \begin{equation}\label{eq:orientable-to-A_B-simpl}
    \sum_{i=1}^g \biggl[f_{y_i}\rb{1^0-1^{\delta_{x_i}}}-f_{x_i}\rb{1^0-1^{\delta_{y_i}}}\biggr] +
    \sum_{i=1}^m \biggl[f_{z_i}'\rb{1^0-1^{\delta_{c_i}}}+f_{c_i}^{\delta_{z_i}'}\biggr]=0.
    \end{equation}
\end{lemma}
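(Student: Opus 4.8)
The plan is to expand the left-hand side of \eqref{eq:orientable} in $A\wr B$ directly, using the formulae for $(\delta,f)^{-1}$, $z^{-1}cz$, and $[x,y]$ recorded earlier together with the semidirect product multiplication rule $(\delta_1,u)(\delta_2,v)=(\delta_1+\delta_2,u^{\delta_2}+v)$, and then to compare the outcome with \eqref{eq:orientable-to-A_B-simpl}.

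First I would handle the commutator block $\prod_{i=1}^g[x_i,y_i]$. Each commutator $[x_i,y_i]=(0,f_{y_i}(1^0-1^{\delta_{x_i}})-f_{x_i}(1^0-1^{\delta_{y_i}}))$ has trivial $B$-component, so multiplying them amounts to adding the $A^B$-components: $\prod_{i=1}^g[x_i,y_i]=(0,g_1)$, where $g_1=\sum_{i=1}^g[f_{y_i}(1^0-1^{\delta_{x_i}})-f_{x_i}(1^0-1^{\delta_{y_i}})]$ is exactly the first sum in \eqref{eq:orientable-to-A_B-simpl}.

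Next I would expand the conjugate block. Writing $z_j^{-1}c_jz_j=(\delta_{c_j},h_j)$ with $h_j=f_{z_j}(1^0-1^{\delta_{c_j}})+f_{c_j}^{\delta_{z_j}}$, a routine induction on $m$ using the multiplication rule gives $\prod_{j=1}^m z_j^{-1}c_jz_j=(\sum_{j=1}^m\delta_{c_j},\sum_{j=1}^m h_j^{\gamma_j})$, where $\gamma_j=\sum_{l=j+1}^m\delta_{c_l}$ is the accumulated shift contributed by the later factors. Then, using commutativity of the group ring $A^B$ and the identity $f^b=f\cdot1^b$ (so that $(fg)^b=f^bg$ and $(f^{b_1})^{b_2}=f^{b_1+b_2}$), I would rewrite $h_j^{\gamma_j}=f_{z_j}^{\gamma_j}(1^0-1^{\delta_{c_j}})+f_{c_j}^{\delta_{z_j}+\gamma_j}$, which is precisely the $i=j$ summand of the second sum in \eqref{eq:orientable-to-A_B-simpl} under the substitutions $f_{z_j}'=f_{z_j}1^{\gamma_j}$ and $\delta_{z_j}'=\delta_{z_j}+\gamma_j$ from the statement. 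Hence $\prod_{j=1}^m z_j^{-1}c_jz_j=(\sum_j\delta_{c_j},g_2)$, where $g_2$ is the second sum in \eqref{eq:orientable-to-A_B-simpl}.

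Finally I would combine the two blocks: the left-hand side of \eqref{eq:orientable} equals $(0,g_1)(\sum_j\delta_{c_j},g_2)=(\sum_j\delta_{c_j},g_1^{\sum_j\delta_{c_j}}+g_2)$, which is the identity $(0,0)$ exactly when $\sum_i\delta_{c_i}=0$ and $g_1^{\sum_j\delta_{c_j}}+g_2=0$. Once $\sum_i\delta_{c_i}=0$, the shift on $g_1$ is trivial, so the second condition reads $g_1+g_2=0$, i.e.\ \eqref{eq:orientable-to-A_B-simpl}; conversely those two conditions make the product collapse to $(0,0)$, giving \eqref{eq:orientable}. I expect the only real obstacle to be the bookkeeping in the conjugate block: noncommutativity of $B$ forces the nested shift exponents $\gamma_j=\sum_{l>j}\delta_{c_l}$, and one must check carefully that they reassemble into exactly the stated $f_{z_i}'$ and $\delta_{z_i}'$. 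A secondary subtlety worth flagging is the spurious shift $g_1^{\sum\delta_{c_j}}$ on the commutator block, which disappears precisely because $\sum_i\delta_{c_i}=0$ is forced by the equation.
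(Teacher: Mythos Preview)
Your proposal is correct and is exactly the ``straightforward computation'' the paper alludes to: the paper's proof consists of the single sentence ``Straightforward computation,'' and what you have written is precisely that computation carried out in full. One tiny slip: where you write ``noncommutativity of $B$ forces the nested shift exponents $\gamma_j$,'' you mean noncommutativity of $A\wr B$ --- the group $B$ itself is abelian throughout.
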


\begin{proof}
Straightforward computation.
\end{proof}

\begin{proposition}\label{pr:DP-QSP-reduction}
Let $\CE$ be an equation of the form \eqref{eq:orientable}
satisfying $\sum_{i=1}^m\delta_{c_i}=0$.
Then $\CE$ has a solution if and only if $(A,B/\gp{\ovdelta_c},\ovf_c,2g)$
is a positive instance of $\QSP$.
\end{proposition}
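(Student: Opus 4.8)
\emph{Overview and Step 1 (rewriting $\CE$).} The plan is to chain Lemma~\ref{le:orientable-to-A_B-simpl}, which already converts $\CE$ into the single group-ring equation \eqref{eq:orientable-to-A_B-simpl} over $A^B$, with Proposition~\ref{pr:transform-components}, which absorbs the ``coefficient part'' $\sum_i f_i(1^0-1^{b_i})$ of such an equation into a quotient of $B$; the argument is a chain of equivalences and the computations are routine, so the real content is only in lining up the various quotients with the definition of $\QSP$. Since $\sum_{i=1}^m\delta_{c_i}=0$ by hypothesis, Lemma~\ref{le:orientable-to-A_B-simpl} applies, and I would first observe that for each $i$ the substitution $(f_{z_i},\delta_{z_i})\mapsto(f_{z_i}',\delta_{z_i}')$ appearing there is a bijection of $A^B\times B$: its first coordinate is multiplication by the fixed invertible element $1^{\sum_{j>i}\delta_{c_j}}$ of $A^B$, its second coordinate is translation by the fixed element $\sum_{j>i}\delta_{c_j}\in B$, and the $x_i,y_i$-variables are untouched. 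Hence $\CE$ has a solution if and only if \eqref{eq:orientable-to-A_B-simpl} has a solution in the unknowns $\ovdelta_x,\ovdelta_y\in B^g$, $\ovdelta_z'\in B^m$ and $\ovf_x,\ovf_y\in(A^B)^g$, $\ovf_z'\in(A^B)^m$.

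\emph{Step 2 (absorbing the coefficient part).} Next I would fix the $B$-components $\ovdelta_x,\ovdelta_y,\ovdelta_z'$ and read \eqref{eq:orientable-to-A_B-simpl} as an equation in the function-unknowns $\ovf_x,\ovf_y,\ovf_z'$ only. It then has exactly the shape required by Proposition~\ref{pr:transform-components}: its coefficient part is $\sum_{i=1}^g\bigl(f_{y_i}(1^0-1^{\delta_{x_i}})-f_{x_i}(1^0-1^{\delta_{y_i}})\bigr)+\sum_{i=1}^m f_{z_i}'(1^0-1^{\delta_{c_i}})$, and the remaining term $W'=\sum_{i=1}^m f_{c_i}^{\delta_{z_i}'}$ is now a constant of $A^B$ (it contains no remaining unknowns). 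Setting $M=\gp{\delta_{x_1},\delta_{y_1},\ldots,\delta_{x_g},\delta_{y_g},\delta_{c_1},\ldots,\delta_{c_m}}\le B$, Proposition~\ref{pr:transform-components} gives: for these fixed $B$-components, \eqref{eq:orientable-to-A_B-simpl} is solvable in $A^B$ if and only if $\sum_{i=1}^m f_{c_i}^{\delta_{z_i}'}=_{A^{B/M}}0$.

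\emph{Step 3 (recognizing $\QSP$).} Write $\overline B=B/\gp{\ovdelta_c}$ and let $N\le\overline B$ be the image of $\gp{\delta_{x_1},\delta_{y_1},\ldots,\delta_{x_g},\delta_{y_g}}$. Since $\gp{\ovdelta_c}\le M$ and $M$ is exactly the preimage of $N$ under the canonical projection $B\to\overline B$, there is a canonical identification $A^{B/M}=A^{\overline B/N}$ compatible with the $B$-actions, under which $f_{c_i}^{\delta_{z_i}'}$ is carried to $\overline{f_{c_i}}^{\,\overline{\delta_{z_i}'}}$, the bars denoting images in $A^{\overline B}$ and in $\overline B$. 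As $\ovdelta_x,\ovdelta_y$ range over $B^g\times B^g$ the subgroup $N$ ranges over exactly the subgroups of $\overline B$ admitting a generating set of size at most $2g$, i.e.\ those with $\rank(N)\le 2g$; and as $\ovdelta_z'$ ranges over $B^m$ its image in $\overline B^m$ ranges over all of $\overline B^m$. Stringing the three steps together,
\[
\CE\ \text{has a solution}\ \iff\ \exists\, N\le\overline B,\ \rank(N)\le 2g,\ \exists\, \ovdelta\in\overline B^m:\ \sum_{i=1}^m \overline{f_{c_i}}^{\,\delta_i}=_{A^{\overline B/N}}0,
\]
and the right-hand side is by definition the assertion that $(A,\overline B,\ovf_c,2g)$ is a positive instance of $\QSP$ (where, as in the statement, $\ovf_c$ is identified with its image in $A^{\overline B}$).

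\emph{Main obstacle.} The step that needs the most care is Step~3: one must check that the composite $B\to\overline B\to\overline B/N$ coincides with $B\to B/M$ (equivalently, that $M$ is the full preimage of $N$ in $B$) and that the induced map on functions commutes with the $B$-action, so that the image of $f_{c_i}^{\delta_{z_i}'}$ is indeed $\overline{f_{c_i}}^{\,\overline{\delta_{z_i}'}}$. Both facts are immediate from the definitions of the induced epimorphism $\varphi^\ast$ and of $f\mapsto f^b$, but they are precisely what makes the two sides of the $\QSP$-equivalence match up; a reader who prefers may instead split the proof into its two implications, which costs nothing.
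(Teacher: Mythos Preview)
Your proof is correct and follows essentially the same approach as the paper: both chain Lemma~\ref{le:orientable-to-A_B-simpl} with Proposition~\ref{pr:transform-components} to reduce $\CE$ to a $\QSP$ instance. The only cosmetic difference is that the paper applies Proposition~\ref{pr:transform-components} twice in succession---first to the $f_{z_i}'(1^0-1^{\delta_{c_i}})$ terms (passing to $A^{B/\gp{\ovdelta_c}}$), then to the $f_{x_i},f_{y_i}$ terms (passing to $A^{(B/\gp{\ovdelta_c})/\gp{\ovdelta_x,\ovdelta_y}}$)---whereas you apply it once to all the $(1^0-1^b)$ terms together and then invoke the third isomorphism theorem to identify $B/M$ with $\overline B/N$; this trade-off explains why your Step~3 needs the preimage check while the paper's proof does not.
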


\begin{proof}
Indeed, 
\begin{align*}
\mbox{$\CE$ has a solution in $A\wr B$}
&\ \ \stackrel{\ref{le:orientable-to-A_B-simpl}}{\Leftrightarrow}\ \ 
\mbox{\eqref{eq:orientable-to-A_B-simpl} has a solution in $A^B$}\\
&\ \ \stackrel{\ref{pr:transform-components}}{\Leftrightarrow}\ \ 
\sum_{i=1}^g \biggl[f_{y_i}\rb{1^0-1^{\delta_{x_i}}}-f_{x_i}\rb{1^0-1^{\delta_{y_i}}}\biggr] +
\sum_{i=1}^m f_{c_i}^{\delta_{z_i}'}=_{A^{B/\langle\ovdelta_c\rangle}}0\\
&\ \ \stackrel{\ref{pr:transform-components}}{\Leftrightarrow}\ \ 
\sum_{i=1}^m f_{c_i}^{\delta_{z_i}'}=_{A^{(B/\langle\ovdelta_c\rangle)/\langle\ovdelta_x,\ovdelta_y\rangle}}0 \\
&
\ \ \Leftrightarrow\ \ 
(A,B/\gp{\ovdelta_c},\ovf_c,2g)
\mbox{ is a positive instance of $\QSP$.}
\vspace*{-\dimexpr\baselineskip + \topsep}\qedhere
\end{align*}
\end{proof}

\section{Quotient sum problem: computational properties}
\label{se:QSP-properties}

In this section we study computational properties of 
several variations of $\QSP$ that correspond to 
the Diophantine problem for some natural classes of equations
over wreath products $A\wr B$
(discussed in Section \ref{se:orientable}).

\begin{lemma}\label{le:trivial-sum}
If $I=(A,B,\ovf,h)$, where $\ovf=(f_1,\dots,f_m)$,
is a positive instance of $\QSP$, then 
$\sum_{i=1}^m \sum_{x\in\supp(f_{i})} f_{i}(x)= 0$.
\end{lemma}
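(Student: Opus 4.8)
The plan is to exploit the homomorphism $\pi_A\colon A\wr B\to A$ (equivalently, the "total sum" map on $A^B$) together with the defining condition of a positive $\QSP$ instance. Suppose $I=(A,B,\ovf,h)$ is positive, so there exist $\ovdelta\in B$ and a subgroup $N\le B$ with $\sum_{i=1}^m f_i^{\delta_i}=_{A^{B/N}}0$. First I would recall the map $\varphi^\ast\colon A^B\to A^{B/N}$ (the induced epimorphism from Section~\ref{se:A-B-properties}) and then compose with the total-sum map $A^{B/N}\to A$, $g\mapsto\sum_{x\in\supp(g)}g(x)$. The key observation is that this composite equals the total-sum map $A^B\to A$, $g\mapsto\sum_{x\in\supp(g)}g(x)$: collapsing cosets of $N$ only regroups the values of $g$ and adds them in batches, so the grand total is unchanged. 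Equally, the total-sum map is unaffected by the shift operation $g\mapsto g^\delta$, since $g^\delta$ simply permutes the support of $g$ without altering the multiset of nonzero values.

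Applying the total-sum map $A^{B/N}\to A$ to the equation $\sum_{i=1}^m f_i^{\delta_i}=_{A^{B/N}}0$ and using that this map is a homomorphism, I get
$$
0 = \sum_{i=1}^m \Big(\text{total sum of } f_i^{\delta_i}\Big) = \sum_{i=1}^m \Big(\text{total sum of } f_i\Big) = \sum_{i=1}^m \sum_{x\in\supp(f_i)} f_i(x),
$$
which is exactly the claimed identity. The two equalities in the middle use, respectively, shift-invariance of the total sum and the fact that passing to $A^{B/N}$ does not change the total sum.

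This is essentially a "take the abelianization of the target" argument, and there is no real obstacle — the only thing to be careful about is checking that the total-sum map is genuinely well-defined and additive on $A^{B/N}$ and that it factors the total-sum map on $A^B$ through $\varphi^\ast$; both are immediate from the definition $\varphi^\ast(f)(x)=\sum_{\varphi(y)=x}f(y)$. I would state these two facts as one or two sentences and then conclude in a single display. No induction, and no appeal to the lattice or partition machinery, is needed here.
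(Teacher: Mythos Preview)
Your proposal is correct and is essentially the same argument as the paper's: the paper simply replaces $N$ by $B$ (using that $=_{A^{B/N}}0$ implies $=_{A^{B/B}}0$), and since $A^{B/B}\simeq A$ via the total-sum map, this is exactly your ``apply the total-sum homomorphism'' step together with shift-invariance.
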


\begin{proof}\belowdisplayskip=-12pt
\begin{align*}
\mbox{$I$ is a positive instance of $\QSP$}
&\ \Rightarrow\ 
\sum_{i=1}^m f_i^{\delta_i}=_{A^{B/N}}0
&&\mbox{(for some } N\le B \mbox{ and } \ovdelta\in B\mbox{)}\\
&\ \Rightarrow\ 
\sum_{i=1}^m f_i^{\delta_i}=_{A^{B/B}}0 \\
&\ \Rightarrow\ 
\sum_{i=1}^m \sum_{x\in\supp(f_{i})} f_{i}(x)= 0.
\end{align*}
\end{proof}

\subsection{Complexity lower bound}

Let $a\in A$ and $b\in B$.
For an instance $T=\{t_1,\ldots,t_{3k}\}$ of $\TPART$,
let $L = \frac{1}{k}\sum t_i$ be the target sum for subsets, where $L/4<t_i<L/2$ for each $i$.
For $y\in \MN$ define elements
$$
c_y= \prod_{i=0}^{y-1} a^{-i\cdot b}, \ \ \ \  
c=\prod_{i=0}^{k-1} 
c_L^{-(L+1)i\cdot b}.
$$
Let $\ovc=(c_{t_1},\ldots,c_{t_{3k}},c^{-1})$, where, slightly abusing notation, $c_{t_i}$ are interpreted as the function components of the corresponding elements and $c$ is interpreted as the function component of $c$. The proof of the following proposition is similar in structure to the proof of \cite[Proposition 3.1]{ushakov2023quadratic}.

\begin{figure}[!h]
\centering
\scalebox{0.85}{
\begin{tikzpicture}
	\draw[<-,ultra thick] (-1.5,0)--(1.5,0);
        \draw[->,ultra thick] (2.5,0)--(4.5,0);
	\draw[-, thick] (-1,-0.2)--(-1,0.2);
	\draw[-, thick] (0,-0.2)--(0,0.2);
	\draw[-, thick] (4,-0.2)--(4,0.2);
	\draw[-, thick] (3,-0.2)--(3,0.2);
	\draw[-, thick] (1,-0.2)--(1,0.2);
 
	\node[draw=none] at (0,-0.5) {$0$};
        \node[draw=none] at (1,-0.5) {$1$};
        \node[draw=none] at (2,0) {$\cdots$};
        \node[draw=none] at (3,-0.5) {$y-1$};
        \node[draw=none] at (4,-0.5) {$y$};
        \node[draw=none] at (-1,-0.5) {$-1$};
	
	\filldraw[draw=black,fill=black] (1,0) circle (3pt);
        \filldraw[draw=black,fill=white] (-1,0) circle (3pt);
	\filldraw[draw=black,fill=black] (0,0) circle (3pt);
	\filldraw[draw=black,fill=black] (3,0) circle (3pt);
	\filldraw[draw=black,fill=white] (4,0) circle (3pt);
        \node[draw=none] at (1.5,0.5) {\footnotesize{$y$ lamps}};

        \draw [-] (-0.4,0) to (-0.4,1);
        \draw [-] (3.4,0) to (3.4,1);
        \draw [-] (-0.4,1) to (3.4,1);
        
	\end{tikzpicture}}
\caption{The function component of the element $c_y$ in $\MZ_2\wr\MZ$.}\label{fig:cylamps}
\end{figure}
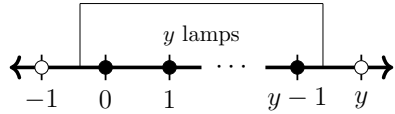

\begin{figure}[!h]
\centering
\scalebox{0.85}{
\begin{tikzpicture}
	\draw[<-,ultra thick] (-1.5,0)--(1.5,0);
        \draw[-,ultra thick] (2.5,0)--(6.5,0);
        \draw[-,ultra thick] (7.5,0)--(9.5,0);
        \draw[-,ultra thick] (10.5,0)--(13.5,0);
        \draw[->,ultra thick] (14.5,0)--(16.5,0);
	\draw[-, thick] (-1,-0.2)--(-1,0.2);
	\draw[-, thick] (0,-0.2)--(0,0.2);
	\draw[-, thick] (5,-0.2)--(5,0.2);
	\draw[-, thick] (4,-0.2)--(4,0.2);
	\draw[-, thick] (3,-0.2)--(3,0.2);
	\draw[-, thick] (1,-0.2)--(1,0.2);
        \draw[-, thick] (6,-0.2)--(6,0.2);
        \draw[-, thick] (8,-0.2)--(8,0.2);
        \draw[-, thick] (9,-0.2)--(9,0.2);
        \draw[-, thick] (11,-0.2)--(11,0.2);
        \draw[-, thick] (12,-0.2)--(12,0.2);
        \draw[-, thick] (13,-0.2)--(13,0.2);
        \draw[-, thick] (16,-0.2)--(16,0.2);

        \node[draw=none, rotate=45, anchor=east] at (-1,-0.25) {\footnotesize$-1$};
	\node[draw=none, rotate=45, anchor=east] at (0,-0.25) {\footnotesize$0$};
        \node[draw=none, rotate=45, anchor=east] at (1,-0.25) {\footnotesize$1$};
        \node[draw=none] at (1.5,0.5) {\footnotesize{$L$ lamps}};
        \node[draw=none] at (2,0) {$\cdots$};
        \node[draw=none, rotate=45, anchor=east] at (3,-0.25) {\footnotesize$L-1$};

        \node[draw=none, rotate=45, anchor=east] at (4,-0.25) {\footnotesize$L$};
        \node[draw=none, rotate=45, anchor=east] at (5,-0.25) {\footnotesize$L+1$};
        \node[draw=none, rotate=45, anchor=east] at (6,-0.25) {\footnotesize$L+2$};
        \node[draw=none] at (6.5,0.5) {\footnotesize{$L$ lamps}};
        \node[draw=none] at (7,0) {$\cdots$};
        \node[draw=none, rotate=45, anchor=east] at (8,-0.25) {\footnotesize$2L$};
        \node[draw=none, rotate=45, anchor=east] at (9,-0.25) {\footnotesize$2L+1$};

        \node[draw=none] at (10,0) {$\cdots$};
        \node[draw=none, rotate=45, anchor=east] at (11,-0.25) {\footnotesize$(L+1)(k-1)-1$};
        \node[draw=none, rotate=45, anchor=east] at (12,-0.25) {\footnotesize$(L+1)(k-1)$};
        \node[draw=none, rotate=45, anchor=east] at (13,-0.25) {\footnotesize$(L+1)(k-1)+1$};
        \node[draw=none] at (14,0) {$\cdots$};
        \node[draw=none] at (13.5,0.5) {\footnotesize{$L$ lamps}};
        \node[draw=none, rotate=45, anchor=east] at (15,-0.25) {\footnotesize$(L+1)(k-1)+(L-1)$};
        \node[draw=none, rotate=45, anchor=east] at (16,-0.25) {\footnotesize$(L+1)(k-1)+L$};

	\filldraw[draw=black,fill=black] (1,0) circle (3pt);
	\filldraw[draw=black,fill=black] (3,0) circle (3pt);
        \filldraw[draw=black,fill=white] (-1,0) circle (3pt);
	\filldraw[draw=black,fill=black] (0,0) circle (3pt);
	\filldraw[draw=black,fill=white] (4,0) circle (3pt);
	\filldraw[draw=black,fill=black] (5,0) circle (3pt);
        \filldraw[draw=black,fill=black] (6,0) circle (3pt);
        \filldraw[draw=black,fill=black] (8,0) circle (3pt);
        \filldraw[draw=black,fill=white] (9,0) circle (3pt);
        \filldraw[draw=black,fill=white] (11,0) circle (3pt);
        \filldraw[draw=black,fill=black] (12,0) circle (3pt);
        \filldraw[draw=black,fill=black] (13,0) circle (3pt);
        \filldraw[draw=black,fill=black] (15,0) circle (3pt);
        \filldraw[draw=black,fill=white] (16,0) circle (3pt);

        \draw [-] (-0.4,0) to (-0.4,1);
        \draw [-] (3.4,0) to (3.4,1);
        \draw [-] (-0.4,1) to (3.4,1);

        \draw [-] (4.6,0) to (4.6,1);
        \draw [-] (8.4,0) to (8.4,1);
        \draw [-] (4.6,1) to (8.4,1);

        \draw [-] (11.6,0) to (11.6,1);
        \draw [-] (15.4,0) to (15.4,1);
        \draw [-] (11.6,1) to (15.4,1);
	\end{tikzpicture}
}
\caption{
The function component for the element $c$ in $\MZ_2\wr\MZ$ consists of $k$ clusters of $L$ lit lamps, each cluster separated by a single unlit lamp.}\label{fig:clamps}
\end{figure}
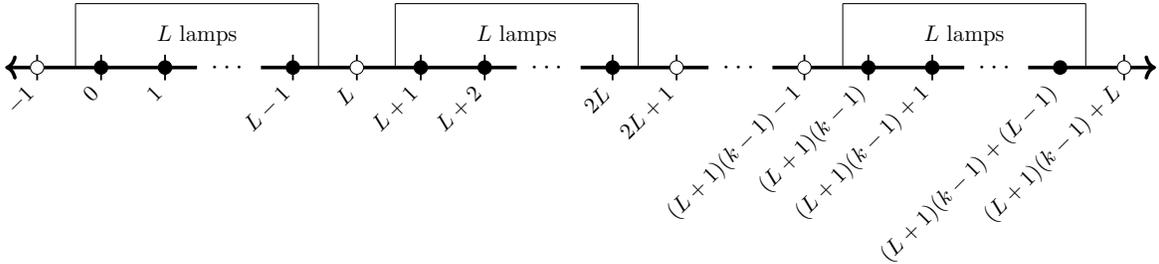

\begin{proposition}\label{pr:qsp_tpart}
Assume $|b|=\infty$ and $a$ is nontrivial. Then $T$ is a positive instance of $\TPART$ if and only if $(A,B,\ovc,0)$ is a positive 
instance of $\QSP$.
\end{proposition}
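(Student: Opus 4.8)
The plan is to turn both implications into a combinatorial statement about tiling a ``$b$-line'' by intervals, so I begin with bookkeeping. Identify $A^B$ with the group ring $A[B]$. Since $a$ is nontrivial and $|b|=\infty$, the positions $0,b,2b,\dots$ are pairwise distinct, so the function component $f_{c_y}$ of $c_y$ takes value $a$ on the $y$ consecutive positions $0,b,\dots,(y-1)b$ and $0$ elsewhere; call such a function a \emph{$b$-block of length $y$}, and note $|\supp(f_{c_y})|=y$. Likewise the function component $f_c$ of $c$ takes value $a$ on a disjoint union of $k$ $b$-blocks of length $L$, consecutive blocks being separated by exactly one position where $f_c$ vanishes, so $|\supp(f_c)|=kL$; also $\sum_{i=1}^{3k}t_i=kL$. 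Finally, since $h=0$, any $N\le B$ with $\rank(N)\le 0$ is trivial, hence $A^{B/N}=A^B$, and so $(A,B,\ovc,0)$ is a positive instance of $\QSP$ if and only if there exist $\delta_1,\dots,\delta_{3k+1}\in B$ with
\[
\sum_{i=1}^{3k} f_{c_{t_i}}^{\,\delta_i}\ =\ f_c^{\,\delta_{3k+1}}\qquad\text{in }A^B,
\]
using that $f\mapsto f^{\delta}$ is an automorphism of $(A^B,+)$, so $(-f_c)^{\delta_{3k+1}}=-f_c^{\,\delta_{3k+1}}$.

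For the forward direction, assume $T$ splits into $k$ triples each summing to $L$. To each of the $k$ length-$L$ $b$-blocks of $f_c$ assign one triple $\{t',t'',t'''\}$, and partition that block --- a set of $L$ consecutive positions $q,q+b,\dots,q+(L-1)b$ --- into three consecutive $b$-blocks of lengths $t',t'',t'''$. These are translates of $f_{c_{t'}},f_{c_{t''}},f_{c_{t'''}}$ by elements of $\gp{b}\le B$, hence equal $f_{c_{t'}}^{\,\delta},f_{c_{t''}}^{\,\delta'},f_{c_{t'''}}^{\,\delta''}$ for suitable $\delta,\delta',\delta''\in B$. Ranging over all $k$ blocks of $f_c$, this produces shifts $\delta_1,\dots,\delta_{3k}$ for which the $f_{c_{t_i}}^{\,\delta_i}$ have pairwise disjoint supports with union $\supp(f_c)$, so $\sum_{i=1}^{3k} f_{c_{t_i}}^{\,\delta_i}=f_c$; taking $\delta_{3k+1}=0$ gives the required identity.

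For the converse, suppose the identity holds. Each $f_{c_{t_i}}^{\,\delta_i}$ has support of size $t_i$, and $\supp\bigl(\sum_i f_{c_{t_i}}^{\,\delta_i}\bigr)\subseteq\bigcup_i\supp\bigl(f_{c_{t_i}}^{\,\delta_i}\bigr)$, so
\[
kL\ =\ \bigl|\supp\bigl(f_c^{\,\delta_{3k+1}}\bigr)\bigr|\ \le\ \Bigl|\bigcup_{i=1}^{3k}\supp\bigl(f_{c_{t_i}}^{\,\delta_i}\bigr)\Bigr|\ \le\ \sum_{i=1}^{3k}\bigl|\supp\bigl(f_{c_{t_i}}^{\,\delta_i}\bigr)\bigr|\ =\ \sum_{i=1}^{3k} t_i\ =\ kL.
\]
Equality throughout forces the $3k$ supports to be pairwise disjoint and to partition $\supp(f_c^{\,\delta_{3k+1}})$. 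Since $f_c^{\,\delta_{3k+1}}$ is a translate of $f_c$, its support lies in a single coset of $\gp{b}$; identifying that coset with $\MZ$ turns $\supp(f_c^{\,\delta_{3k+1}})$ into a disjoint union of $k$ integer intervals of length $L$ (consecutive ones separated by a gap) and each $\supp(f_{c_{t_i}}^{\,\delta_i})$ into an interval of length $t_i<L$. An interval of length $<L$ contained in that union and meeting one of the length-$L$ intervals cannot reach the adjacent gap, so it lies inside that interval; hence the blocks falling in a given cluster partition it into consecutive intervals, and their lengths sum to $L$. Because $L/4<t_i<L/2$, each cluster is cut into exactly three pieces (at most two would total less than $L$, at least four more than $L$), and since there are $3k$ blocks and $k$ clusters this is a partition of $\{t_1,\dots,t_{3k}\}$ into $k$ triples summing to $L$, i.e.\ $T$ is a positive instance of $\TPART$.

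The only step requiring real care is this last geometric argument: locating the coset of $\gp{b}$ carrying $\supp(f_c^{\,\delta_{3k+1}})$, transporting the ``blocks separated by single gaps'' picture to $\MZ$, and using the single-position gaps together with $t_i<L/2$ to confine each translated $b$-block to one cluster. The support-count inequality and the ``exactly three per cluster'' count are then routine, following the structure of the proof in \cite{ushakov2023quadratic}.
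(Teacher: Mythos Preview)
Your proof is correct and follows essentially the same route as the paper's: the forward direction places the three blocks of each triple consecutively inside one of the $k$ length-$L$ clusters of $c$, and the backward direction uses the support-count $\sum_i |\supp(f_{c_{t_i}}^{\delta_i})|=\sum t_i=kL=|\supp(f_c^{\delta_{3k+1}})|$ to conclude that the shifted blocks tile $\supp(f_c^{\delta_{3k+1}})$ without overlap, after which the $L/4<t_i<L/2$ hypothesis forces exactly three blocks per cluster. The only differences are cosmetic: the paper silently normalizes $\delta_{3k+1}=0$ rather than carrying it, and is terser about why the counting equality yields a genuine tiling, whereas you spell out the inclusion $\supp(f_c^{\delta_{3k+1}})\subseteq\bigcup_i\supp(f_{c_{t_i}}^{\delta_i})$ and the squeeze that forces both pairwise disjointness and equality of the union with $\supp(f_c^{\delta_{3k+1}})$.
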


\begin{proof}
``$\Rightarrow$''
If $T$ is a positive instance of $\TPART$, then, without loss of generality, 
we may assume that
$$
\sum_{i=1}^3 t_{3j+i}=L \ \ \text{for} \ \ j=0,1,\ldots,k-1.
$$
Let $L_i=(L+1)i$ for $i=1,\dots,k-1$.
Then clearly
\begin{align*}
\delta_1&=0 &\delta_2&=-t_1 &\delta_3&=-t_1-t_2\\
\delta_4&=-L_1 &\delta_5&=-L_1-t_4 &\delta_6&=-L_1-t_4-t_5\\
\delta_7&=-L_2 &\delta_8&=-L_2-t_7 &\delta_9&=-L_2-t_7-t_8\\
&\cdots &&\cdots &&\cdots 
\end{align*}
satisfies $\left(\sum_{i=1}^{3k} c_{t_i}^{\delta_i}\right)-c=_{A^{B/\gp{0}}}0$
with $\rank(\gp{0})=0$.
Hence, $(A,B,\ovc,0)$ is a positive instance of $\QSP$.

``$\Leftarrow$''
If $(A,B,\ovc,0)$ is a positive instance of $\QSP$, 
then there is a sequence $\ovdelta=(\delta_1,\ldots,\delta_{3k})$ 
satisfying
$$\rb{\sum_{i=1}^{3k} c_{t_i}^{\delta_i}}-c=_{A^B}0 \ \ \Rightarrow \ \ \sum_{i=1}^{3k} c_{t_i}^{\delta_i}=_{A^{B}}c.$$
By definition of $c_{t_i}$ and $c$ we have
\begin{align*}
\supp(c_{t_i}) &= \{0\cdot b,\ldots,(t_i-1)\cdot b\}\\
\supp(c)&=\{0\cdot b,\ldots,(L-1)\cdot b\}\cup \{(L+1)\cdot b,\ldots,2L\cdot b\}\cup\cdots
\end{align*}
(see Figures \ref{fig:cylamps} and \ref{fig:clamps}) and $\sum |\supp(c_{t_i})|=|\supp(c)|$.
Hence, $\ovdelta$ establishes a matching between elements of
$\cup_i \supp(c_{t_i})$ and $\supp(c)$
by shifting the $3k$ sequences of $a$-lamps
of lengths $t_1,\dots,t_{3k}$ into $k$ groupings of $a$-lamps each of length $L$
(can be viewed as a non-overlapping tiling of $\supp(c)$
with sets $\supp(c_{t_i})$).
Now it is obvious that if the grouping of $L$ $a$-lamps is tiled with
$c_{t_{i_1}},\dots,c_{t_{i_l}}$, then $l=3$ and $t_{i_1}+t_{i_2}+t_{i_3}=L$.
Thus, $T = \left\{t_1,\ldots,t_{3k}\right\}$ is a positive instance of $\TPART$.
\end{proof}

\begin{theorem}\label{th:QSP-infinite-B}
Let $A$ and $B$ be finitely generated abelian groups.
If $A$ is nontrivial and $B$ is infinite,
then $\QSP(A,B,\cdot,0)$ is $\NP$-hard.
\end{theorem}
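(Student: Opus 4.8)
The plan is to give a polynomial-time many-one reduction from the $3$-partition problem $\TPART$, which is strongly $\NP$-complete, to $\QSP(A,B,\cdot,0)$, using exactly the gadget constructed in the paragraph preceding Proposition~\ref{pr:qsp_tpart}. Since $A$ and $B$ are fixed with $A\not\simeq\mathbbm{1}$ and $B$ infinite, I would first fix once and for all a nontrivial element $a\in A$ and an element $b\in B$ of infinite order; such a $b$ exists because a finitely generated infinite abelian group has positive free rank, and it can be exhibited explicitly by putting $B$ in the form $\MZ_{\alpha_1}\times\cdots$ via its Smith normal form (see Section~\ref{se:rank}). With $a$ and $b$ fixed we have $|b|=\infty$, so Proposition~\ref{pr:qsp_tpart} applies verbatim: an instance $T=\{t_1,\ldots,t_{3k}\}$ of $\TPART$ is positive if and only if $(A,B,\ovc,0)$ is a positive instance of $\QSP$, where $\ovc=(c_{t_1},\ldots,c_{t_{3k}},c^{-1})$ is built from $T$, $a$, $b$ as there. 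Because $A$ and $B$ are fixed, the map $T\mapsto(A,B,\ovc,0)$ is an honest reduction to $\QSP(A,B,\cdot,0)$, and it remains only to check that it runs in time polynomial in $\size(T)$.

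This is the one point requiring care, and it is where the \emph{strong} $\NP$-completeness of $\TPART$ is essential. Since $\TPART$ remains $\NP$-complete when the $t_i$ are bounded above by a polynomial in $m=3k$, we may assume $t_i\le q(3k)$ for a fixed polynomial $q$; then $L=\frac1k\sum_i t_i\le 3q(3k)$ and $(L+1)k$ are polynomially bounded too. Consequently $c_{t_i}$ has support of size $t_i$ and $c$ has support of size $kL$, both polynomial in $\size(T)$, and every exponent occurring in the $a^{j\cdot b}$ that make up these functions is an integer multiple $j\cdot b$ with $0\le j\le (L+1)k$ of the fixed element $b=\beta_1 b_1+\dots+\beta_n b_n$. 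Writing $j\cdot b=(j\beta_1)b_1+\dots+(j\beta_n)b_n$ in unary has size $O(j\cdot\size(b))=O(j)$, again polynomial; summing over the polynomially many support points gives $\size(\ovc)$, and hence $\size((A,B,\ovc,0))$, polynomial in $\size(T)$, with the construction itself plainly carried out in polynomial time.

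Putting the two paragraphs together, $T\mapsto(A,B,\ovc,0)$ is a polynomial-time many-one reduction from $\TPART$ to $\QSP(A,B,\cdot,0)$, so the latter is $\NP$-hard. I expect the size estimate of the second paragraph to be the only real obstacle: without the unary-encoding convention together with the strong $\NP$-completeness of $\TPART$, the exponents $j\cdot b$ could a priori be exponentially large and the map would fail to be a polynomial-time reduction. Note also that the hypothesis that $B$ is infinite is used twice — to produce the infinite-order element $b$ so that the gadget makes sense and Proposition~\ref{pr:qsp_tpart} is applicable, and, inside that proposition, to guarantee that the exponents $0\cdot b,1\cdot b,2\cdot b,\ldots$ are pairwise distinct, which is exactly what makes the tiling/matching argument there work.
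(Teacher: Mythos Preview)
Your proof is correct and follows exactly the paper's approach: fix a nontrivial $a\in A$ and an infinite-order $b\in B$, then invoke Proposition~\ref{pr:qsp_tpart} and note that the reduction is polynomial-time. You have simply made explicit the size analysis (using strong $\NP$-completeness of $\TPART$) that the paper compresses into the single phrase ``the reduction given therein is polynomial-time computable.''
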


\begin{proof}
Fix any nontrivial $a\in A$ and any element $b\in B$ of infinite order
(every infinite finitely generated abelian group contains an element of infinite order).
The statement follows immediately from Proposition \ref{pr:qsp_tpart}; the reduction given therein is polynomial-time computable.
\end{proof}

\subsection{Finite group $B$}

In this section we analyze $\QSP$ when the group $B$ is finite.

\begin{theorem}\label{th:QSP-finite-B}
If $|B|<\infty$, then $\QSP(A,B,\cdot,\cdot)$ is in $\P$
for any fixed finitely generated abelian group $A$.
\end{theorem}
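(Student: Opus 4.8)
The plan is to exploit that $B$ is fixed and finite — so it has only $O(1)$ subgroups — and to reduce, for each subgroup, the existence of a suitable $\ovdelta$ to a pseudopolynomial dynamic program that is genuinely polynomial because $A$ is also fixed and all coefficients are encoded in unary. Concretely, given an instance $(A,B,\ovf,h)$ with $\ovf=(f_1,\dots,f_m)$, the algorithm enumerates all subgroups $N\le B$ (there are at most $2^{|B|}=O(1)$ of them), discards those with $\rank(N)>h$ (a constant-time test, since $\rank(N)\le\rank(B)=O(1)$), and for each remaining $N$ decides whether there is a tuple $\ovdelta=(\delta_1,\dots,\delta_m)\in B^m$ with $\sum_{i=1}^m f_i^{\delta_i}=_{A^{B/N}}0$. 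By the definition of $\QSP$, the instance is positive if and only if this subtest succeeds for at least one such $N$.

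The first step is the observation that, for a fixed $N$, the element $\varphi^\ast(f^\delta)\in A^{B/N}$ depends only on the coset $\delta+N$: writing $\varphi\colon B\to B/N$ for the canonical map, one has $\varphi^\ast(f^\delta)(\bar x)=\sum_{\varphi(y)=\bar x}f(\delta+y)$, and translating $\delta$ by an element of $N$ merely reindexes the sum. So choosing $\delta_i$ is the same as choosing an element $u_i$ of the fixed finite set $Q:=B/N$, with $|Q|\le|B|$, and for each $i$ and each $u\in Q$ one computes, in time polynomial in $\size(f_i)$, the element $g_i(u):=\varphi^\ast(f_i^{\delta})\in A^{Q}$ for any representative $\delta$ of $u$. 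The subtest thus becomes: can one pick $u_i\in Q$ for $i=1,\dots,m$ so that $\sum_{i=1}^m g_i(u_i)=0$ in $A^{Q}$?

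For the last step I would write $A\simeq\MZ^{s}\times F_A$ with $F_A$ a fixed finite group, so that $A^{Q}\simeq\MZ^{s|Q|}\times F_A^{|Q|}$, where the free part has $O(1)$ coordinates and the torsion part has $O(1)$ size. The quantitative heart of the proof is an a priori bound on partial sums. Since $A$ is fixed, each of its generators has free part of bounded norm, say at most $C_A$; consequently each free coordinate of $g_i(u)$ — a fixed $\MZ$-linear functional of norm $\le C_A$ applied to a sum of some of the (unary-encoded) values $f_i(b)$, $b\in\supp(f_i)$ — has absolute value at most $C_A\cdot\size(f_i)$, and so every partial sum $\sum_{i\le j}g_i(u_i)$ has all free coordinates bounded by $P:=C_A\cdot\size(\ovf)$ in absolute value. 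Therefore the reachable partial sums all lie in a set of size at most $(2P+1)^{s|Q|}\cdot|F_A|^{|Q|}$, which is polynomial in $\size(\ovf)$ because $s$, $|Q|$, $|F_A|$ are constants. One then runs the obvious dynamic program: $R_0=\{0\}$, $R_j=\{\,v+g_j(u):v\in R_{j-1},\ u\in Q\,\}\subseteq A^{Q}$ for $j=1,\dots,m$, and declares the subtest positive iff $0\in R_m$. Each of the $m$ rounds processes at most $|Q|$ transitions out of each of polynomially many states, so — together with the outer loop over the constantly many subgroups $N$ — the whole algorithm runs in polynomial time, and its correctness is immediate from the two observations above and the definition of $\QSP$. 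The one delicate point, which I would flag as the crux rather than a genuine obstacle, is exactly this a priori bound on partial sums: it uses in an essential way that $A$ and $B$ are fixed and that coefficients are written in unary, which is precisely what fails for infinite $B$ and is what makes $\QSP(A,B,\cdot,0)$ $\NP$-hard by Theorem \ref{th:QSP-infinite-B}.
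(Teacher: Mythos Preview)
Your proof is correct and follows essentially the same strategy as the paper's: a dynamic program over partial sums $R_j=\{\sum_{i\le j} g_i(u_i)\}$, with the polynomial bound on the number of reachable states coming from the fact that $A^B$ (or $A^{B/N}$) is a fixed finitely generated abelian group and hence has polynomial growth. The only cosmetic differences are that the paper phrases the bound via the growth function of $A^B$ rather than your explicit coordinate decomposition $\MZ^{s|Q|}\times F_A^{|Q|}$, and that it computes the set $V_m=\{\sum_i f_i^{\delta_i}:\delta_i\in B\}\subseteq A^B$ once and then tests it against each subgroup $N$, whereas you pass to $A^{B/N}$ first and run a separate program for each $N$.
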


\begin{proof}
Fix an instance $I=(A,B,\ovf,h)$ of $\QSP$.
Let $\{a_1,\dots,a_n\}$ be the given generating set for $A$.
Then $X=\Set{a_{i}^{-b}\in A^B}{i=1,\dots,n,\ b\in B}$, where
$$
a_{i}^{-b}(x)=
\begin{cases}
a_i & x=b\\
0 & x\ne b,
\end{cases}
$$
is a finite generating set for the abelian group $A^B$, i.e. $A^B$ is an $n|B|$-generated abelian group. Now consider the following subsets of $A^B$:
\begin{align*}
V_0&=\{0\},&V_{i}&
=V_{i-1}+\Set{f_i^{\delta}}{\delta\in B},& (i&=1,\dots,m),
\end{align*}
where $+$ here is the Minkowski sum. It is straightforward to construct $V_0,\dots,V_m$ starting from $V_0$.
We claim that $V_m$ can be constructed in polynomial time in terms
of $\size(I)$.

As usual, for $g\in A^B$, by $|g|_X$ we denote the length of $g$ in
generators $X$.
Also, notice that, by definition of $X$, $|g^{\delta_i}|_X = |g|_X$
for every $\delta_i\in B$ and $g\in A^B$.
Then
\begin{align*}
f\in V_j
&\ \Rightarrow\ 
f=\sum_{i=1}^j f_i^{\delta_i},
\mbox{ for some } \delta_1,\dots,\delta_j\in B\\
&\ \Rightarrow\ 
|f|_X 
\le \sum_{i=1}^j |f_i^{\delta_i}|_X
\le \sum_{i=1}^j |f_i|_X
\le \sum_{i=1}^m |f_i|_X \le \size(I),
\end{align*}
and hence $f\in \mathcal{B}_{\size(I)}$. 
Recall from Section \ref{se:cayley} that $\gr_X(\size(I))\le p(\size(I))$
for some polynomial $p$. Therefore, $|V_j|\le \gr_X(\size(I))\le p(\size(I))$
for every $j=1,\dots,m$. 
It then follows that $V_m$ can be constructed in polynomial time.

Finally, since $B$ is fixed and finite, 
it has finitely many subgroups $N$ that can be precomputed 
(together with their ranks).
Hence, we can test if condition \eqref{eq:qsp_shift}
holds for some $f\in V_m$ and some $N\le B$ in polynomial time.
\end{proof}

In fact, a stronger statement holds.

\begin{corollary}
$\QSP(\cdot_{\rank(A)\le K},\cdot_{|B|\le m},\cdot,\cdot)$ is in $\P$.
\end{corollary}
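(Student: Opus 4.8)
The plan is to mimic the proof of Theorem \ref{th:QSP-finite-B}, replacing ``$A$ fixed'' by ``$\rank(A)\le K$ fixed'' and ``$B$ fixed finite'' by ``$|B|\le m$ fixed''. So fix an instance $I=(A,B,\ovf,h)$ with $\rank(A)\le K$ and $|B|\le m$. By the fundamental theorem of finitely generated abelian groups, $A$ decomposes as a product of at most $K$ cyclic groups, so $A$ is generated by at most $K$ elements $a_1,\dots,a_K$ (these can be extracted in polynomial time from the given presentation via Smith normal form, as in Section \ref{se:rank}). Then, exactly as in the proof of Theorem \ref{th:QSP-finite-B}, the set $X=\Set{a_i^{-b}\in A^B}{i=1,\dots,K,\ b\in B}$ generates $A^B$, and $|X|\le Km$ is bounded by a constant. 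Hence $A^B$ is a $(Km)$-generated abelian group, and its growth function $\gr_X(n)$ is bounded by a fixed polynomial $p$ (depending only on $Km$, since it is at most the growth function of $\MZ^{Km}$, namely $O(n^{Km})$).

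Next I would run the identical Minkowski-sum argument. Define $V_0=\{0\}$ and $V_i=V_{i-1}+\Set{f_i^\delta}{\delta\in B}$; note $|f_i^\delta|_X=|f_i|_X$ because shifting by $\delta\in B$ permutes the generators in $X$. As before, every $f\in V_j$ satisfies $|f|_X\le\sum_{i=1}^m|f_i|_X\le\size(I)$, so $V_j\subseteq\mathcal B_{\size(I)}$ and therefore $|V_j|\le\gr_X(\size(I))\le p(\size(I))$. Since each step adds at most $|B|\le m$ new shifted copies and takes each existing element to at most $m$ sums, $V_m$ is computable in time polynomial in $\size(I)$. Finally, $B$ has at most $2^m$ subgroups, all of which (together with their ranks) can be enumerated in constant time since $m$ is fixed; so one checks, for each $f\in V_m$ and each subgroup $N\le B$, whether $f=_{A^{B/N}}0$ and $\rank(N)\le h$, in polynomial time. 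This settles membership in $\P$.

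The one point requiring slightly more care than in Theorem \ref{th:QSP-finite-B} is the claim that the growth bound $\gr_X(n)\le p(n)$ holds with a polynomial $p$ that does not depend on the particular group $A$ in the instance --- only on the constants $K$ and $m$. This is where the hypothesis $\rank(A)\le K$ is essential: $A^B$ is $(Km)$-generated, so it is a quotient of $\MZ^{Km}$, and the ball of radius $n$ in any $(Km)$-generated abelian group with respect to a generating set of size $\le Km$ has at most $\binom{2Km+n}{n}=O(n^{Km})$ elements, a bound uniform over all such groups. (By contrast, if $\rank(A)$ were unbounded, $A^B$ could have arbitrarily many generators and the construction of $V_m$ would no longer be polynomial.) I expect this uniformity observation to be the only genuine obstacle; the rest is a routine adaptation of the previous proof.

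\begin{proof}
By the fundamental theorem of finitely generated abelian groups, $A$ is a direct product of at most $K$ cyclic groups, hence $A$ is generated by $K'\le K$ elements $a_1,\dots,a_{K'}$, obtainable in polynomial time via the Smith normal form of the given presentation matrix (see Section \ref{se:rank}). As in the proof of Theorem \ref{th:QSP-finite-B}, the set
$$
X=\Set{a_i^{-b}\in A^B}{i=1,\dots,K',\ b\in B}
$$
generates $A^B$, and $|X|\le K|B|\le Km$. Thus $A^B$ is a quotient of $\MZ^{Km}$, so its growth function with respect to $X$ satisfies $\gr_X(n)\le\binom{2Km+n}{n}=O(n^{Km})$; call this bound $p(n)$, a polynomial depending only on the constants $K$ and $m$.

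Fix an instance $I=(A,B,\ovf,h)$ of $\QSP$ with $\ovf=(f_1,\dots,f_\ell)$. Define subsets of $A^B$ by
$$
V_0=\{0\},\qquad V_i=V_{i-1}+\Set{f_i^\delta}{\delta\in B}\quad(i=1,\dots,\ell),
$$
where $+$ denotes Minkowski sum. Since shifting by $\delta\in B$ permutes the generators in $X$, we have $|f_i^\delta|_X=|f_i|_X$, and therefore every $f\in V_j$ satisfies
$$
|f|_X\le\sum_{i=1}^j|f_i^{\delta_i}|_X=\sum_{i=1}^j|f_i|_X\le\sum_{i=1}^\ell|f_i|_X\le\size(I),
$$
so $f\in\mathcal B_{\size(I)}$ and hence $|V_j|\le\gr_X(\size(I))\le p(\size(I))$. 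Each $V_i$ is obtained from $V_{i-1}$ by adding at most $|B|\le m$ shifts of $f_i$, so $V_\ell$ is computable in time polynomial in $\size(I)$.

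Finally, $\ovdelta\in B$ satisfies $\sum_{i=1}^\ell f_i^{\delta_i}=_{A^{B/N}}0$ for some $N\le B$ with $\rank(N)\le h$ if and only if there exists $f\in V_\ell$ and a subgroup $N\le B$ with $f=_{A^{B/N}}0$ and $\rank(N)\le h$. Since $|B|\le m$ is fixed, $B$ has at most $2^m$ subgroups, which (together with their ranks) can be enumerated in constant time. Testing $f=_{A^{B/N}}0$ for each of the at most $p(\size(I))$ elements $f\in V_\ell$ and each such subgroup takes polynomial time. Hence $\QSP(\cdot_{\rank(A)\le K},\cdot_{|B|\le m},\cdot,\cdot)\in\P$.
\end{proof}
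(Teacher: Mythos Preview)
Your approach mirrors the paper's one-line proof: both argue that Theorem~\ref{th:QSP-finite-B} carries over because the growth of $A^B$ can be bounded by a polynomial depending only on $K$ and $m$. You make this explicit by passing (via the Smith normal form) to a generating set of $A$ with at most $K$ elements, so that $|X|\le Km$ and the growth bound $\gr_X(n)=O(n^{Km})$ is uniform over all admissible $A,B$.

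There is, however, a gap exactly at the point you flag as ``requiring slightly more care'': the inequality $\sum_i |f_i|_X \le \size(I)$. Your generating set $X$ is built from the \emph{new} generators $a_1',\dots,a_{K'}'$ of $A$, whereas the values of the $f_i$ are given in the input in terms of the \emph{original} generators $a_1,\dots,a_n$, and $n$ is not assumed bounded by $K$. Rewriting elements in the new basis via the Smith-normal-form change-of-basis matrices can blow up word lengths exponentially. Concretely, take $A=\MZ$ presented with generators $a_1,\dots,a_n$ and relations $a_{i+1}-2a_i=0$ (so $\rank(A)=1$ and the presentation has size $O(n)$); then the element $a_n$ has input size $1$ but word length $2^{n-1}$ with respect to the minimal generating set $\{a_1\}$. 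Hence $|f_i|_X$ need not be bounded by $\size(f_i)$, the inclusion $V_j\subseteq\mathcal B_{\size(I)}$ is unjustified, and the polynomial bound on $|V_j|$ is not established. The paper's terse proof does not confront this issue either, so your write-up is faithful to the original; but the step as written does not go through without further argument or an additional hypothesis such as ``$A$ is presented on at most $K$ generators.''
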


\begin{proof}
The proof of Theorem \ref{th:QSP-finite-B} works in this case because
the values of the growth function for $A^B$ can be bounded by the same polynomial
for any groups $A,B$ satisfying $\rank(A)\le K$ and $|B|\le m$.
\end{proof}

At the same time, Theorem \ref{th:QSP-finite-B}
does not generalize to the case when $A$ is allowed to be 
any finitely generated abelian group -- the problem
becomes $\NP$-hard.
Ultimately, the source of hardness for the problem 
is an unbounded rank of $A$.

\begin{theorem}\label{th:B-Z2}
$\QSP(\cdot,\MZ_2,\cdot,0)$ is $\NP$-hard.
\end{theorem}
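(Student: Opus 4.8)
The plan is to reduce the zero-one equation problem $\ZOE$ to $\QSP(\cdot,\MZ_2,\cdot,0)$. First I would unwind what a positive instance $(A,\MZ_2,\ovf,0)$ with $\ovf=(f_1,\dots,f_m)$ asserts. Since $h=0$, the only subgroup $N\le\MZ_2$ with $\rank(N)\le 0$ is the trivial one $N=\gp{0}$, so $B/N=\MZ_2$ and the instance is positive iff there exist $\delta_1,\dots,\delta_m\in\MZ_2$ with $\sum_{i=1}^m f_i^{\delta_i}=_{A^{\MZ_2}}0$. Identifying $f_i\in A^{\MZ_2}$ with the pair $(a_i,b_i):=(f_i(0),f_i(1))\in A^2$, the shift $f_i^1$ is the swap $(b_i,a_i)$, so, writing $\delta_i\in\{0,1\}$, the equation $\sum f_i^{\delta_i}=_{A^{\MZ_2}}0$ is exactly the pair of conditions
$$
\sum_{\delta_i=0}a_i+\sum_{\delta_i=1}b_i=0 \qquad\text{and}\qquad \sum_{\delta_i=0}b_i+\sum_{\delta_i=1}a_i=0
$$
in $A$; in other words, $\QSP(\cdot,\MZ_2,\cdot,0)$ is a subset-sum-type problem over the (input) finitely generated abelian group $A$.

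Next, given a $\ZOE$ instance $M\in\Mat(n,\MZ)$ with columns $M_1,\dots,M_n\in\{0,1\}^n$, I would output $A=\MZ^n$, $B=\MZ_2$, $h=0$, and the sequence $\ovf=(f_0,f_1,\dots,f_n)$ defined by
$$
f_0=\bigl(-1_n,\ 1_n-(M_1+\cdots+M_n)\bigr), \qquad f_i=(0,\ M_i)\ \ (1\le i\le n),
$$
where $1_n=(1,\dots,1)$. Every integer occurring here has absolute value at most $n$, so the instance is computable in polynomial time and has polynomial size even with integers written in unary. The auxiliary constant $f_0$ is what pins the target of the resulting subset-sum to $1_n$; arranging that $\sum_i(a_i+b_i)=0$, which by Lemma \ref{le:trivial-sum} holds for every positive instance, is the one point that needs a little care. (One could just as well take $A=\MZ_p^n$ for a prime $p>n$ to keep $A$ finite, since each entry of a sum of at most $n$ of the $M_i$ lies in $\{0,\dots,n\}$, so congruence to $1_n$ modulo $p$ is equivalent to equality with $1_n$.)

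I would then verify the equivalence directly from the two displayed conditions. For the backward direction, a $\ZOE$ solution $x\in\{0,1\}^n$ with $\sum_{x_j=1}M_j=1_n$ gives the choice $\delta_0=0$, $\delta_i=x_i$, and a one-line substitution (using $a_0=-1_n$, $b_0=1_n-(M_1+\cdots+M_n)$, $a_i=0$, $b_i=M_i$) checks both conditions. For the forward direction, given any $\delta_0,\dots,\delta_n\in\MZ_2$ with $\sum f_i^{\delta_i}=_{A^{\MZ_2}}0$, the first condition reads $-1_n+\sum_{i\ge 1,\ \delta_i=1}M_i=0$ when $\delta_0=0$ and $1_n-\sum_{i\ge 1,\ \delta_i=0}M_i=0$ when $\delta_0=1$, so in either case the set $\{i\ge 1:\delta_i=1\}$ (respectively its complement in $\{1,\dots,n\}$) indexes a collection of columns of $M$ summing to $1_n$, hence a $\ZOE$ solution. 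Since $\ZOE$ is $\NP$-complete and the reduction is polynomial-time, $\QSP(\cdot,\MZ_2,\cdot,0)$ is $\NP$-hard.

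The main obstacle here is conceptual rather than computational. With $B$ finite there is no room for the ``slide a sequence of lamps along $\MZ$'' argument used for $\TPART$ in Proposition \ref{pr:qsp_tpart}, so the hardness must instead come from the unbounded rank of $A$. The point is to recognize that $\ZOE$ is the right source problem: its $0/1$ entries keep it $\NP$-complete even under unary encoding, and its question ``does some subset of these $0/1$-vectors sum to $1_n$'' matches the additive structure of $A^{\MZ_2}$ precisely, once one adjoins the single constant $f_0$ to set the target.
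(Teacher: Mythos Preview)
Your proof is correct and follows essentially the same approach as the paper: both reduce from $\ZOE$ by taking $A=\MZ^n$ and encoding the columns of $M$ as functions in $A^{\MZ_2}$ together with one auxiliary function $(-1_n,\,1_n-\sum M_i)$ that fixes the target. The only differences are cosmetic (you place $M_i$ at position $1$ rather than $0$ and index the auxiliary function as $f_0$ rather than last), and you spell out the case analysis that the paper leaves as ``easy to see''.
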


\begin{proof}
Let $A=\MZ^n$. Consider an instance $M \in \Mat(n,\MZ)$ of $\ZOE$. Denote zero-one column vectors of $M$ by $c_1,\dots,c_n$.
Define functions $f_1,\dots,f_n,f\colon\MZ_2\to A$ by
$$
f_i(x)=
\begin{cases}
c_i & x=0\\
0_n & x=1\\
\end{cases}
\ \ \ \ \mbox{ and }\ \ \ \ 
f(x)=
\begin{cases}
-1_n & x=0\\
1_n-\sum c_i & x=1.\\
\end{cases}
$$
and $\ovf=(f_1,\dots,f_n,f)$.
It is easy to see that $M$ is a positive instance of $\ZOE$ if and only if $\QSP(A,\MZ_2,\ovf,0)=1$.
\end{proof}

Finally, we can make an observation that the choice of the (nontrivial) 
group $A$ is irrelevant for $\NP$-hardness of $\QSP$
in the following sense.

\begin{corollary}\label{co:irrelevant-A}
For any nontrivial finitely generated abelian groups $A_1,A_2$, and $B$
the problems
$\QSP(A_1,B,\cdot,\cdot)$
and
$\QSP(A_2,B,\cdot,\cdot)$
are either both $\NP$-hard or both polynomial-time decidable.
\end{corollary}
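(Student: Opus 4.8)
The plan is to dispose of this corollary by a single case split on the cardinality of $B$, since the two preceding theorems already pin down the complexity of $\QSP(A,B,\cdot,\cdot)$ for \emph{every} nontrivial $A$ as soon as we know whether $B$ is finite. In other words, the whole content of the statement is the observation that neither Theorem \ref{th:QSP-finite-B} nor Theorem \ref{th:QSP-infinite-B} depends on \emph{which} nontrivial group $A$ is chosen.

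First I would treat the case $|B|<\infty$. Here Theorem \ref{th:QSP-finite-B} applies verbatim to each of the fixed groups $A_1$ and $A_2$, so $\QSP(A_1,B,\cdot,\cdot)$ and $\QSP(A_2,B,\cdot,\cdot)$ both lie in $\P$; this is the ``both polynomial-time decidable'' alternative.

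Next I would treat the case $|B|=\infty$. Since $A_1$ and $A_2$ are nontrivial, Theorem \ref{th:QSP-infinite-B} gives that $\QSP(A_1,B,\cdot,0)$ and $\QSP(A_2,B,\cdot,0)$ are $\NP$-hard. To pass from this to the unrestricted problems $\QSP(A_i,B,\cdot,\cdot)$, I would note that every instance of $\QSP(A,B,\cdot,0)$ is also an instance of $\QSP(A,B,\cdot,\cdot)$ (namely the one with $h=0$) on which the two problems agree, so the identity map is a polynomial-time many-one reduction from the former to the latter; hardness therefore transfers, placing us in the ``both $\NP$-hard'' alternative.

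Since the two cases are exhaustive, this establishes the dichotomy. There is essentially no obstacle: the only step worth stating explicitly is the last one, transferring $\NP$-hardness from the $h=0$ restriction to the general problem, and it is immediate because fixing a parameter can only remove instances. (If one also wants $\NP$-completeness rather than mere $\NP$-hardness, one would additionally invoke membership of $\QSP$ in $\NP$, which is established elsewhere in the paper, but that is not needed for the statement as phrased.)
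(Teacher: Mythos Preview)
Your proposal is correct and matches the paper's own proof essentially verbatim: the paper simply says that by Theorems \ref{th:QSP-infinite-B} and \ref{th:QSP-finite-B}, $\NP$-hardness depends on $B$ only. Your explicit remark about transferring hardness from $\QSP(A,B,\cdot,0)$ to $\QSP(A,B,\cdot,\cdot)$ via the identity reduction is a small extra justification that the paper leaves implicit.
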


\begin{proof}
By Theorems \ref{th:QSP-infinite-B} and
\ref{th:QSP-finite-B}, $\NP$-hardness depends on $B$ only.
\end{proof}

\subsection{Clusters in $A^B$ and $A^{B/N}$}

Let $f_1,\ldots,f_m\in A^B$ and let $I=\{1,\ldots,m\}$ be the set of indices. 
Without loss of generality we may assume that $\supp(f_i)\neq\varnothing$
(otherwise, $f_i$ does not affect the validity of \eqref{eq:qsp_shift}).
Define an equivalence relation $R(\ovf)$ as the transitive closure of the following binary relation on $I$:
$$
J(\ovf)=\Set{(i,j)\in I\times I}{\supp(f_i)\cap\supp(f_j)\neq\varnothing}.
$$
A \emph{cluster} for $\ovf$ is an equivalence class for the relation $R(\ovf)$. 
The set of all clusters for $\ovf$ forms a partition of $I$
and is denoted by $\CC(\ovf)$. 
The graph $G(\ovf)=(I,J(\ovf))$ is called the \emph{cluster-graph} for $\ovf$; clusters are connected components of $G(\ovf)$. For a sequence $\ovf$ of functions $f_1,\ldots,f_m$ and $C\subseteq \{1,\ldots,m\}$
define the \emph{diameter} of $C$ as
$$
\diam(C,\ovf)=
\diam\rb{\bigcup_{i\in C} \supp(f_i)}.
$$

Let $\varphi\colon B\to B/N$ be a canonical epimorphism and
$N=\ker(\varphi)\unlhd B$.
We can define clusters modulo a subgroup $N$ using a parallel construction. Define the equivalence relation $R_\varphi(\ovf)$ as the transitive closure of
$$
J_{\varphi}(\ovf)=\Set{(i,j)\in I\times I}{\varphi(\supp(f_i)) \cap \varphi(\supp(f_j))\ne\varnothing}.
$$
Then a \textit{cluster modulo $N$} for $\ovf$ is an equivalence class for $R_\varphi(\ovf)$, and the set of all clusters modulo $N$ for $\ovf$ is denoted by $\CC_{\varphi}(\ovf)$. The graph $G_\varphi(\ovf)=(I, J_\varphi(\ovf))$ is called the \textit{cluster-graph modulo $N$} for $\ovf$.

\begin{lemma}\label{le:diameterbounded}
$\diam(C,\ovf) \le \sum_{i\in C} \diam(f_i)$
for every $C\in\CC(\ovf)$.
\end{lemma}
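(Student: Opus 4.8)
The plan is to take two arbitrary points $g,h$ of the union $\bigcup_{i\in C}\supp(f_i)$, bound $d(g,h)$ by $\sum_{i\in C}\diam(f_i)$, and then pass to the supremum. First I would fix indices $i_0,i_1\in C$ with $g\in\supp(f_{i_0})$ and $h\in\supp(f_{i_1})$; such indices exist by definition of the union and of a cluster. Since $C$ is a connected component of the cluster-graph $G(\ovf)=(I,J(\ovf))$, there is a \emph{simple} path $i_0=j_0,j_1,\ldots,j_\ell=i_1$ in $G(\ovf)$ whose vertices all lie in $C$ and are pairwise distinct.

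Next, for each edge $(j_t,j_{t+1})$ of this path, the defining condition of $J(\ovf)$ yields a point $p_t\in\supp(f_{j_t})\cap\supp(f_{j_{t+1}})$. By the triangle inequality for the metric $d$,
$$
d(g,h)\ \le\ d(g,p_0)+\sum_{t=0}^{\ell-2} d(p_t,p_{t+1})+d(p_{\ell-1},h),
$$
where for $\ell=0$ this is read as the single inequality $d(g,h)\le\diam(f_{j_0})$. Now $g$ and $p_0$ both lie in $\supp(f_{j_0})$, so $d(g,p_0)\le\diam(f_{j_0})$; likewise $p_t$ and $p_{t+1}$ both lie in $\supp(f_{j_{t+1}})$, so $d(p_t,p_{t+1})\le\diam(f_{j_{t+1}})$; and $p_{\ell-1},h$ both lie in $\supp(f_{j_\ell})$, so $d(p_{\ell-1},h)\le\diam(f_{j_\ell})$. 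Adding these bounds gives $d(g,h)\le\sum_{t=0}^{\ell}\diam(f_{j_t})$.

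Finally, because the chosen path is simple, $j_0,\ldots,j_\ell$ are distinct elements of $C$, and since every $\diam(f_i)\ge 0$ we conclude $\sum_{t=0}^{\ell}\diam(f_{j_t})\le\sum_{i\in C}\diam(f_i)$. Taking the supremum over all $g,h\in\bigcup_{i\in C}\supp(f_i)$ yields $\diam(C,\ovf)\le\sum_{i\in C}\diam(f_i)$. I do not expect a genuine obstacle here: the one point that needs care is insisting that the connecting path in $G(\ovf)$ be simple (vertex-non-repeating), as otherwise the telescoping bound could over-count some $\diam(f_i)$ and exceed $\sum_{i\in C}\diam(f_i)$. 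Finiteness of all the diameters involved, hence well-definedness of the right-hand sum, is automatic since every $f_i\in A^B$ has finite support.
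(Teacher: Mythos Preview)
Your argument is correct. Both your proof and the paper's rest on the same two ingredients---connectivity of the cluster-graph $G(\ovf)$ restricted to $C$, and the triangle inequality---but they are organized differently. The paper proceeds by induction on $|C|$: it removes a non-cut-vertex from the connected component, applies the inductive bound to the remaining $n$ vertices, and then reattaches the deleted vertex through a single shared support point. You instead fix the two points $g,h$ from the outset, pick a simple path in $G(\ovf)$ between their indices, and telescope along the intermediate intersection points in one shot. Your version is a little more direct and avoids the auxiliary fact that every connected graph on at least two vertices has a non-cut-vertex; the paper's inductive packaging, on the other hand, makes the ``peel off one function at a time'' structure explicit. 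Neither buys anything the other lacks for this lemma.
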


\begin{proof}
We proceed with induction on $|C|$. If $|C|=1$, then 
$C=\{i\}$ for some $i\in I$ and
$$
\diam(C,\ovf) = \diam(f_i)
$$
and the statement holds. 
Assume that the statement holds when $|C|=n$ and consider a cluster $C$
of $n+1$ functions. Without loss of generality, $C=\{1,\ldots,n+1\}$
and the cluster-graph $G(\ovf)$ 
has a connected component on vertices $\{1,\ldots,n+1\}$.
That component must have at least one vertex that is not a cut-vertex.
Without loss of generality, $n+1$ is not a cut-vertex. 
Deleting $n+1$ from $G(\ovf)$ creates a connected component $C'=\{1,\ldots,n\}$
and, by the induction assumption, 
$$
\diam(C',\ovf) \le \sum_{i=1}^n \diam(f_i).
$$
Since $n+1\in C$ we have $(i,n+1)\in J(\ovf)$ for some $i$
and $\supp(f_i)\cap\supp(f_{n+1})$ contains some point $x\in B$. 
Hence, by the triangle inequality,
$$
d(y,z)\le d(y,x)+d(x,z)
\le\diam(C',\ovf)+\diam(f_{n+1}) = \sum_{i=1}^{n+1} \diam(f_i)
$$
for any $y\in \bigcup_{j=1}^n \supp(f_j)$ and $z\in\supp(f_{n+1})$.
Therefore, the claimed inequality holds.
\end{proof}

\begin{lemma}
$\CC(\ovf)$ is a non-cruder partition of $\{1,\ldots,m\}$ than
$\CC_\varphi(\ovf)$.
\end{lemma}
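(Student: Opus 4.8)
The plan is to show directly that the partition $\CC(\ovf)$ refines $\CC_\varphi(\ovf)$, i.e., every cluster of $\CC(\ovf)$ is contained in some cluster of $\CC_\varphi(\ovf)$. The natural route is to compare the two generating relations $J(\ovf)$ and $J_\varphi(\ovf)$ at the level of edges before passing to transitive closures.

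First I would observe that $J(\ovf)\subseteq J_\varphi(\ovf)$. Indeed, if $(i,j)\in J(\ovf)$, then there is a point $x\in\supp(f_i)\cap\supp(f_j)$, and then $\varphi(x)$ lies in both $\varphi(\supp(f_i))$ and $\varphi(\supp(f_j))$, so $\varphi(\supp(f_i))\cap\varphi(\supp(f_j))\neq\varnothing$ and $(i,j)\in J_\varphi(\ovf)$. Thus the cluster-graph $G(\ovf)=(I,J(\ovf))$ is a subgraph (on the same vertex set) of the cluster-graph modulo $N$, $G_\varphi(\ovf)=(I,J_\varphi(\ovf))$.

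Next I would pass to transitive closures: since $J(\ovf)\subseteq J_\varphi(\ovf)$, any path in $G(\ovf)$ is a path in $G_\varphi(\ovf)$, hence $R(\ovf)\subseteq R_\varphi(\ovf)$ as relations on $I$. Equivalently, every connected component of $G(\ovf)$ is contained in a connected component of $G_\varphi(\ovf)$. Since clusters are exactly the connected components (equivalence classes) of these graphs, every $C\in\CC(\ovf)$ is contained in a unique $C'\in\CC_\varphi(\ovf)$, which is precisely the statement that $\CC(\ovf)$ is a non-cruder (finer-or-equal) partition than $\CC_\varphi(\ovf)$.

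There is essentially no obstacle here; the only thing to be careful about is the direction of the inclusion of edge sets (applying $\varphi$ can only merge supports, never separate them), and making sure the definition of ``non-cruder'' is read as ``refines''. The argument is a one-line edge-set containment followed by the standard fact that a subgraph relation between two graphs on the same vertex set forces the connected-component partition of the subgraph to refine that of the supergraph.
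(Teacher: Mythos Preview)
Your proposal is correct and takes exactly the same approach as the paper: the paper's proof is the one-line remark that the statement follows immediately from $J(\ovf)\subseteq J_\varphi(\ovf)$, which is precisely the edge-set containment you establish and then unwind.
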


\begin{proof}
Follows immediately from the fact that $J(\ovf) \subseteq J_\varphi(\ovf)$.
\end{proof}

\begin{lemma}
$\varphi(\supp(f_i)) \cap \varphi(\supp(f_j))=\varnothing$
$\ \Rightarrow\ $
$\supp(\varphi^\ast(f_i)) \cap \supp(\varphi^\ast(f_j))=\varnothing$.
\end{lemma}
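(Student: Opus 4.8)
The plan is to prove the single containment $\supp(\varphi^\ast(f)) \subseteq \varphi(\supp(f))$ for an arbitrary $f \in A^B$, after which the claimed implication is immediate: if $\varphi(\supp(f_i)) \cap \varphi(\supp(f_j)) = \varnothing$, then the two supports $\supp(\varphi^\ast(f_i))$ and $\supp(\varphi^\ast(f_j))$ sit inside disjoint sets and hence are themselves disjoint.

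First I would recall the definition $\varphi^\ast(f)(x) = \sum_{\varphi(y)=x} f(y)$. To establish $\supp(\varphi^\ast(f)) \subseteq \varphi(\supp(f))$, I would argue by contraposition: fix $x \in B/N$ with $x \notin \varphi(\supp(f))$. Then every $y \in B$ with $\varphi(y) = x$ satisfies $y \notin \supp(f)$, so $f(y) = 0$; summing over all such $y$ gives $\varphi^\ast(f)(x) = 0$, i.e.\ $x \notin \supp(\varphi^\ast(f))$. This proves the containment.

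Finally, applying this to $f_i$ and $f_j$ yields $\supp(\varphi^\ast(f_i)) \subseteq \varphi(\supp(f_i))$ and $\supp(\varphi^\ast(f_j)) \subseteq \varphi(\supp(f_j))$, and intersecting disjoint sets keeps them disjoint, so $\supp(\varphi^\ast(f_i)) \cap \supp(\varphi^\ast(f_j)) = \varnothing$. There is no real obstacle here — the only mild subtlety is making sure the finiteness of $\supp(f)$ is used only implicitly (so that the sum defining $\varphi^\ast(f)(x)$ makes sense), which it already is by the standing assumption that $f \in A^B$ has finite support.

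\begin{proof}
We first observe that $\supp(\varphi^\ast(f)) \subseteq \varphi(\supp(f))$ for every $f \in A^B$. Indeed, suppose $x \in B/N$ satisfies $x \notin \varphi(\supp(f))$. Then for every $y \in B$ with $\varphi(y) = x$ we have $y \notin \supp(f)$, hence $f(y) = 0$, and therefore
$$
\varphi^\ast(f)(x) = \sum_{\varphi(y)=x} f(y) = 0,
$$
so $x \notin \supp(\varphi^\ast(f))$. This proves the inclusion.

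Now assume $\varphi(\supp(f_i)) \cap \varphi(\supp(f_j)) = \varnothing$. By the inclusion just established, $\supp(\varphi^\ast(f_i)) \subseteq \varphi(\supp(f_i))$ and $\supp(\varphi^\ast(f_j)) \subseteq \varphi(\supp(f_j))$, so
$$
\supp(\varphi^\ast(f_i)) \cap \supp(\varphi^\ast(f_j)) \subseteq \varphi(\supp(f_i)) \cap \varphi(\supp(f_j)) = \varnothing,
$$
as claimed.
\end{proof}
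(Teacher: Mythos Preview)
Your proof is correct and follows essentially the same approach as the paper: the paper's proof consists of the single line ``Because $\supp(\varphi^\ast(f))\subseteq \varphi(\supp(f))$ for every $f\in A^B$,'' which is precisely the containment you establish and then apply. Your version simply spells out the (straightforward) verification of that containment.
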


\begin{proof}
Because 
$\supp(\varphi^\ast(f_i))\subseteq \varphi(\supp(f))$
for every $f\in A^B$.
\end{proof}

\subsection{Clusters associated with a solution for \eqref{eq:qsp_shift}}

Consider a solution $\ovdelta,N$ for \eqref{eq:qsp_shift}. 
The clusters $\CC_\varphi(\ovfdelta)$ corresponding to $\ovdelta,N$
are independent parts of the solution in the sense of the following lemma.
 
\begin{lemma}\label{le:clustersum}
$\varphi^\ast\rb{\sum_{i\in C} f_i^{\delta_i}}=_{A^{B/N}}0$ for every $C\in \CC_\varphi(\ovfdelta)$.
\end{lemma}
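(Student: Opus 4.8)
The plan is to combine the combinatorial meaning of the clusters modulo $N$ with the support lemma proved just above: indices lying in distinct clusters of $\CC_\varphi(\ovfdelta)$ push forward to functions with disjoint supports in $A^{B/N}$, and a family of functions with pairwise disjoint supports can sum to $0$ only if every one of them is already $0$.

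First I would fix a solution $\ovdelta,N$ of \eqref{eq:qsp_shift}, write $\varphi\colon B\to B/N$ for the canonical epimorphism, and set
$g_C \;:=\; \varphi^\ast\!\Bigl(\textstyle\sum_{i\in C} f_i^{\delta_i}\Bigr) \;=\; \sum_{i\in C}\varphi^\ast\!\bigl(f_i^{\delta_i}\bigr)\in A^{B/N}$
for each $C\in\CC_\varphi(\ovfdelta)$, using that $\varphi^\ast$ is a group homomorphism. Since $\CC_\varphi(\ovfdelta)$ is a partition of $\{1,\dots,m\}$ and $\varphi^\ast$ is a homomorphism, the hypothesis \eqref{eq:qsp_shift} rewrites as $\sum_{C\in\CC_\varphi(\ovfdelta)} g_C =_{A^{B/N}} 0$.

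Second, I would check that the $g_C$ have pairwise disjoint supports. If $C\ne C'$ are distinct clusters and $i\in C$, $j\in C'$, then $(i,j)\notin R_\varphi(\ovfdelta)$; since $J_\varphi(\ovfdelta)\subseteq R_\varphi(\ovfdelta)$, also $(i,j)\notin J_\varphi(\ovfdelta)$, i.e. $\varphi(\supp(f_i^{\delta_i}))\cap\varphi(\supp(f_j^{\delta_j}))=\varnothing$. By the last of the support lemmas above this yields $\supp(\varphi^\ast(f_i^{\delta_i}))\cap\supp(\varphi^\ast(f_j^{\delta_j}))=\varnothing$. As $\supp(g_C)\subseteq\bigcup_{i\in C}\supp(\varphi^\ast(f_i^{\delta_i}))$ and similarly for $g_{C'}$, we conclude $\supp(g_C)\cap\supp(g_{C'})=\varnothing$.

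Finally, I would evaluate $\sum_C g_C$ at an arbitrary point $x\in B/N$: by pairwise disjointness of supports at most one summand is nonzero at $x$, so $\bigl(\sum_C g_C\bigr)(x)=0$ forces $g_C(x)=0$ for every cluster $C$. Hence each $g_C$ is the zero function of $A^{B/N}$, which is exactly the assertion $\varphi^\ast\!\bigl(\sum_{i\in C} f_i^{\delta_i}\bigr)=_{A^{B/N}}0$. I do not expect a genuine obstacle here; the only care needed is bookkeeping — tracking the shifted sequence $\ovfdelta$ through the relation $J_\varphi$ and through one application each of the two elementary support lemmas, so that disjointness of supports is preserved all the way down to $A^{B/N}$.
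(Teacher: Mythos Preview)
Your proposal is correct and follows essentially the same route as the paper: rewrite the total sum as a sum of the per-cluster pieces $g_C$, observe that distinct clusters yield pieces with pairwise disjoint supports in $A^{B/N}$, and conclude that each $g_C$ vanishes. The paper argues the disjointness a touch more tersely (``by definition of a cluster''), whereas you explicitly invoke the support lemma $\varphi(\supp(f_i))\cap\varphi(\supp(f_j))=\varnothing \Rightarrow \supp(\varphi^\ast(f_i))\cap\supp(\varphi^\ast(f_j))=\varnothing$, but this is only a difference in level of detail, not in strategy.
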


\begin{proof}
Since $\ovdelta, N$ is a solution for 
\eqref{eq:qsp_shift}, we have
$$
0 =
\varphi^\ast \left(\sum_{i=1}^m f_i^{\delta_i}\right) =
\sum_{i=1}^m \varphi^\ast(f_i^{\delta_i}) =
\sum_{C\in \CC_\varphi(\ovfdelta)}
\left(
\sum_{i\in C} \varphi^\ast(f_i^{\delta_i})
\right)
\mbox{ in } A^{B/N}.
$$
Clearly, for every $C\in \CC_\varphi(\ovfdelta)$,
$$
\supp\bigg(\sum_{i\in C} \varphi^\ast(f_i^{\delta_i})\bigg)
\subseteq
\bigcup_{i\in C} \supp( \varphi^\ast(f_i^{\delta_i})).
$$
Furthermore, by definition of a cluster, for 
distinct $C,D \in \CC_\varphi(\ovfdelta)$
we have
$$
\bigcup_{i\in C} \supp( \varphi^\ast(f_i^{\delta_i}))
\ \ \cap \ \ 
\bigcup_{i\in D} \supp( \varphi^\ast(f_i^{\delta_i}))=
\varnothing.
$$
Therefore, 
$\sum_{i\in C} \varphi^\ast(f_i^{\delta_i})=_{A^{B/N}}0$
for every 
$C \in \CC_\varphi(\ovfdelta)$.
\end{proof}

Below we consider $\ovdelta$ as an element of the abelian group $B^m$.
For $\Delta\in B$ and $C\subseteq \{1,\dots,m\}$ define
$\Delta^{(C)}\in B^m$ as $(b_1,\dots,b_m)$, where
$$
b_i=
\begin{cases}
\Delta& \mbox{if } i\in C\\
0& \mbox{if } i\notin C.
\end{cases}
$$
The next lemma claims
that shifting all functions from the same cluster simultaneously by $\Delta$ produces a new solution for \eqref{eq:qsp_shift}.

\begin{lemma}\label{le:cluster-shift}
If $(\ovdelta,N)$ satisfies \eqref{eq:qsp_shift}, 
then $(\ovdelta+\Delta^{(C)},N)$ satisfies \eqref{eq:qsp_shift}
for any $\Delta\in B$ and $C \in \CC_\varphi(\ovfdelta)$.
\end{lemma}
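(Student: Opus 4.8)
The plan is to show that replacing $\ovdelta$ by $\ovdelta + \Delta^{(C)}$ changes neither the subgroup $N$ (which stays fixed) nor the validity of the equation $\sum_i f_i^{\delta_i} =_{A^{B/N}} 0$, and to argue this cluster by cluster. The key point is that shifting all functions in a single cluster $C$ by a common element $\Delta$ only translates the ``footprint'' of that cluster in $B/N$, without changing how it sums internally, and leaves every other cluster untouched.

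First I would record that, by Lemma~\ref{le:clustersum}, the hypothesis that $(\ovdelta,N)$ satisfies \eqref{eq:qsp_shift} is equivalent to the family of equations $\varphi^\ast\bigl(\sum_{i\in D} f_i^{\delta_i}\bigr) =_{A^{B/N}} 0$ for every $D\in\CC_\varphi(\ovfdelta)$. So it suffices to verify the analogous cluster-wise equations for the new shift $\ovdelta' = \ovdelta + \Delta^{(C)}$. For a cluster $D \ne C$ nothing changes: $\delta_i' = \delta_i$ for all $i\in D$, and moreover $D$ is still a cluster modulo $N$ for the new shifted sequence $\ovf{}^{\ovdelta'}$, since the supports $\varphi(\supp(f_i^{\delta_i}))$ for $i\in D$ are unchanged. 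For the cluster $C$ itself, I would use the identity $f_i^{\delta_i + \Delta} = (f_i^{\delta_i})^{\Delta} = f_i^{\delta_i}\cdot 1^{\Delta}$ (identity~1 from the ``useful identities'' list), so that
$$
\sum_{i\in C} f_i^{\delta_i'} = \sum_{i\in C} f_i^{\delta_i}\cdot 1^{\Delta} = \Bigl(\sum_{i\in C} f_i^{\delta_i}\Bigr)\cdot 1^{\Delta}.
$$
Applying $\varphi^\ast$ and using that $\varphi^\ast$ is a ring homomorphism with $\varphi^\ast(1^\Delta) = 1^{\varphi(\Delta)}$, we get $\varphi^\ast\bigl(\sum_{i\in C} f_i^{\delta_i'}\bigr) = \varphi^\ast\bigl(\sum_{i\in C} f_i^{\delta_i}\bigr)\cdot 1^{\varphi(\Delta)} = 0\cdot 1^{\varphi(\Delta)} = 0$ in $A^{B/N}$, using Lemma~\ref{le:clustersum} for the cluster $C$.

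The one thing that needs a little care — and what I expect to be the main (though minor) obstacle — is the bookkeeping of clusters: after shifting, the partition $\CC_\varphi(\ovf{}^{\ovdelta'})$ could in principle differ from $\CC_\varphi(\ovfdelta)$, because the shifted cluster $C$ might now overlap (modulo $N$) with some other cluster $D$. This does not hurt us, because the cluster-wise equations I have just verified, one for each block of the old partition $\CC_\varphi(\ovfdelta)$, already sum to $\varphi^\ast\bigl(\sum_{i=1}^m f_i^{\delta_i'}\bigr) = 0$; so $(\ovdelta+\Delta^{(C)},N)$ satisfies the first condition of \eqref{eq:qsp_shift}. The rank condition $\rank(N)\le h$ is inherited verbatim since $N$ is unchanged. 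Hence $(\ovdelta+\Delta^{(C)},N)$ satisfies \eqref{eq:qsp_shift}, completing the proof.
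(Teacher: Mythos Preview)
Your proof is correct and follows essentially the same approach as the paper: split the sum into the piece indexed by $C$ and its complement, invoke Lemma~\ref{le:clustersum} to see that the $C$-part is already zero in $A^{B/N}$, and observe that shifting a zero function by $\Delta$ (equivalently, multiplying by $1^\Delta$) keeps it zero. The paper's write-up is terser---it simply displays the three-line chain of equalities---while you spell out the ring-homomorphism step and pre-empt the (harmless) possibility that the cluster partition changes after shifting; but the substance is the same.
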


\begin{proof}\belowdisplayskip=-12pt
Since $(\ovdelta, N)$ satisfies \eqref{eq:qsp_shift}, 
it follows from Lemma \ref{le:clustersum} that
\begin{align*}
0 & =
\varphi^\ast \bigg(\sum_{i=1}^m f_i^{\delta_i}\bigg) \\
& = \varphi^\ast\left(\sum_{i\not\in C} f_i^{\delta_i}\right) + \varphi^\ast\left(\sum_{i\in C} f_i^{\delta_i}\right) \\
& \stackrel{\ref{le:clustersum}}{=}
\varphi^\ast\left(\sum_{i\not\in C} f_i^{\delta_i}\right) + \varphi^\ast\left(\sum_{i\in C} f_i^{\delta_i+\Delta^{(C)}}\right).
\end{align*}
\end{proof}

Our next goal is to prove Corollary \ref{cor:same_clusters}, which claims that every solution $(\ovdelta, N)$ for \eqref{eq:qsp_shift}
can be modified into a solution  $(\ovdelta', N)$
satisfying the additional condition
$\CC(\ovfdelta') = \CC_\varphi(\ovfdelta')$.

\begin{lemma}\label{le:cluster_equal}
Suppose that 
$\CC_\varphi(\ovfdelta) = \{C\}$, i.e. $C$ is a single cluster for functions $\varphi^\ast(\ovfdelta) \in A^{B/N}$.
Then there exists $\ovdelta' = (\delta_1',\ldots,\delta_m')$
satisfying the following:
\begin{itemize}
\item[(a)]
$\ovdelta'\equiv_N \ovdelta$;
\item[(b)]
$\sum_{i=1}^m f_i^{\delta_i}=_{A^{B/N}}\sum_{i=1}^m f_i^{\delta_i'}$;
\item[(c)]
$\CC(\ovf{}^{\ovdelta'})=\{C\}$.
\end{itemize}
\end{lemma}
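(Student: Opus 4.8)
The plan is to start from the given solution $(\ovdelta, N)$, note that modulo $N$ everything lives inside one cluster $C$, and then ``pull apart'' the functions in $B$ (not just in $B/N$) so that their supports actually overlap in $B$, thereby merging the clusters $\CC(\ovf^{\ovdelta})$ into a single cluster while not changing the image in $A^{B/N}$. The key tool is Lemma \ref{le:cluster-shift}: shifting all functions of a cluster of $\ovf^{\ovdelta}$ simultaneously by some $\Delta \in N$ keeps a solution a solution, and — crucially — if $\Delta \in N$, then the image in $A^{B/N}$ is literally unchanged (not merely still a solution), since $\varphi^\ast(f_i^{\delta_i + \Delta}) = \varphi^\ast(f_i^{\delta_i})$ for $\Delta \in N$; this gives property (b), and property (a) is immediate because every shift used lies in $N$.

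The main work is property (c). First I would consider the clusters $D_1, \dots, D_r$ of $\ovf^{\ovdelta}$ (i.e. the connected components of $G(\ovf^{\ovdelta})$ in $B$). Since $\CC_\varphi(\ovf^{\ovdelta}) = \{C\}$ is a single cluster, the images $\varphi(\supp(f_i^{\delta_i}))$ for $i \in C$ are ``connected'' modulo $N$; this means that for any two of the $B$-clusters $D_s, D_t$ there is a chain of indices $i_0 \in D_s, i_1, \dots, i_\ell \in D_t$ with $\varphi(\supp(f_{i_{j-1}}^{\delta_{i_{j-1}}})) \cap \varphi(\supp(f_{i_j}^{\delta_{i_j}})) \neq \varnothing$ for each $j$. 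An overlap modulo $N$ means there are points $x \in \supp(f_{i_{j-1}}^{\delta_{i_{j-1}}})$, $y \in \supp(f_{i_j}^{\delta_{i_j}})$ with $x - y \in N$. I would then process the $B$-clusters one at a time: having already glued $D_1, \dots, D_{s}$ into a single connected piece in $B$, pick a chain as above witnessing the $B/N$-connection between that piece and $D_{s+1}$, and shift every function in $D_{s+1}$ (and inductively the functions in later $D_t$'s as they get absorbed) by an appropriate element $\Delta \in N$ so that the relevant supports in $B$ actually meet. Concretely, if $x \in \supp(f_{i_{j-1}}^{\delta_{i_{j-1}}})$ and $y \in \supp(f_{i_j}^{\delta_{i_j}})$ with $x - y = \Delta \in N$, then replacing $\delta_i$ by $\delta_i + \Delta$ for all $i$ in the cluster of $i_j$ translates $\supp(f_{i_j}^{\delta_{i_j}})$ by $-\Delta$ (using the formula $f^{\delta}$ shifts support, cf. the Lemma after the $\tau$-isomorphism), moving $y$ to $y + \Delta = x$, so the supports now intersect in $B$; by Lemma \ref{le:cluster-shift} we still have a solution, and since $\Delta \in N$ the $A^{B/N}$-image is unchanged. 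Iterating over the chain glues $D_{i_0}$'s piece to $D_{i_j}$'s piece, and iterating over $s$ collapses everything to one $B$-cluster.

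The subtle point — and the step I expect to be the main obstacle — is verifying that the shifts performed to merge $D_{s+1}$ onto the growing piece do not destroy overlaps created at earlier stages: each shift moves an entire current $B$-cluster rigidly, so intersections internal to that cluster are preserved, and intersections between already-merged pieces are preserved because they are shifted together; one has to be careful to always shift the piece being newly attached (by an element of $N$) rather than the already-assembled part, and to re-choose the witnessing point of overlap modulo $N$ after each translation, since the absolute positions change but membership in $N$-cosets does not. Once all of $\ovf^{\ovdelta'}$ lies in a single connected component of $G(\ovf^{\ovdelta'})$, we have $\CC(\ovf^{\ovdelta'}) = \{C'\}$ for a single cluster $C'$; since $C' = \{1,\dots,m\} = C$ (we assumed $\supp(f_i) \neq \varnothing$ for all $i$ and $C$ was the single cluster mod $N$, which contains all indices), this is exactly (c). This completes the construction of $\ovdelta'$.
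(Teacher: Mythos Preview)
Your argument is correct and rests on the same mechanism as the paper's: a $B/N$-overlap between two functions yields points in their supports differing by an element of $N$, and shifting one side by that element (which preserves everything modulo $N$, giving (a) and hence (b)) produces a genuine overlap in $B$. The paper organizes the iteration differently: rather than merging the $B$-clusters $D_1,\dots,D_r$ one at a time and following chains through $G_\varphi$, it inducts on $m$, at each step removing a non-cut-vertex $n+1$ from $G_\varphi(\ovf^{\ovdelta})$, applying the inductive hypothesis to collapse $f_1,\dots,f_n$ into a single $B$-cluster, and then shifting $f_{n+1}$ alone by a suitable element of $N$ to attach it. This packaging sidesteps the issue you correctly flag as the delicate point (that gluing along a chain passing through several $D_t$'s might disturb earlier overlaps), since at every stage exactly one function is being attached to an already-completed single $B$-cluster.

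One minor slip: you write that replacing $\delta_i$ by $\delta_i+\Delta$ translates the support by $-\Delta$ and then that $y$ moves to $y+\Delta$; these are inconsistent. With $\supp(f^{\delta+\Delta})=\supp(f^\delta)-\Delta$ the point $y$ moves to $y-\Delta$, so to land on $x$ you want $\Delta=y-x\in N$ rather than $x-y$. This is harmless since $N$ is a subgroup.
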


\begin{proof}
We prove existence of such a sequence using induction on $m$. First, observe that (a) immediately implies (b). If $m=1$, then $C=\{1\}$ and all conditions are trivially satisfied for $\ovdelta'=\ovdelta$.
Assume that the statement holds for $m=n$.

We now address the case where $m=n+1$. Consider $n+1$ functions 
$f_1^{\delta_1},\ldots,f_{n+1}^{\delta_{n+1}}\in A^B$ where the functions $\{\varphi^\ast(f_i^{\delta_i})\}_{i=1}^{n+1}$ form a single cluster in $A^{B/N}$. 
Then the cluster-graph modulo $N$ for $\ovf$, $G_\varphi(\ovf)$, is connected
and has at least two vertices and, hence, contains at least one non-cut-vertex.
Without loss of generality, we may assume that the vertex $n+1$ is not a cut-vertex. 
Deleting $n+1$ from $G_\varphi(\ovf)$ results in a connected graph.
That means that the functions $\{\varphi^\ast(f_i^{\delta_i})\}_{i=1}^{n}$ form a single cluster in $A^{B/N}$ and the induction hypothesis applies
to $\{f_i^{\delta_i}\}_{i=1}^n$. Thus, there exist $\delta_1',\ldots,\delta_n'\in B$ satisfying (a), (b), and (c), which means that $\{f_i^{\delta_i'}\}_{i=1}^n$ are all in one cluster $C'$ in $A^B$ and $|\CC(\ovf{}^{\ovdelta'})|\leq 2$, where $\ovdelta'=(\delta_1',\ldots,\delta_n',0)$.

If $|\CC(\ovf{}^{\ovdelta'})| = 1$, then we are done.
Consider the case when $|\CC(\ovf{}^{\ovdelta'})| = 2$, i.e., when $n+1\notin C'$.
\begin{align*}
C=\{1,\ldots,n,n+1\}
&\ \ \Rightarrow\ \ 
(i,n+1)\in J_\varphi(\ovf{}^{\ovdelta'}) \mbox{ for some } 1\leq i\le n\\
&\ \ \Rightarrow\ \ 
\varphi(\supp\bigl(f_i^{\delta_i'})\bigr)\cap\varphi(\supp\bigl(f_{n+1}^{\delta_{n+1}})\bigr)
\ne\varnothing\\
&\ \ \Rightarrow\ \ 
\exists \ p\in \varphi(\supp\bigl(f_i^{\delta_i'})\bigr)\cap\varphi(\supp\bigl(f_{n+1}^{\delta_{n+1}})\bigr)\\
&\ \ \Rightarrow\ \ 
p=\varphi(p_i)=\varphi(p_{n+1}) \mbox{ for some } 
p_{n+1}\in\supp(f_{n+1}^{\delta_{n+1}}), \ p_i\in\supp(f_i^{\delta_i'}).
\end{align*}
Then for $\delta_{n+1}'=\delta_{n+1}+(p_i-p_{n+1})$
the following holds:
$$
\delta_{n+1}'\equiv_{N} \delta_{n+1}
\ \ \mbox{ and }\ \ 
p_i\in\supp(f_i^{\delta_i'})\cap\supp(f_{n+1}^{{\delta_{n+1}'}}).
$$
The former implies condition (a) and the latter
implies that $n+1$ belongs to the same cluster
as $i$ in $A^B$, i.e., 
condition (c) is satisfied.
\end{proof}

\begin{lemma}\label{le:same_clusters}
Let $\CC_\varphi(\ovfdelta)=\{C_1,\ldots,C_n\}$ for $n\geq 2$.
Then there exists $\ovdelta'$ satisfying
\begin{itemize}
\item[(a)]
$\ovdelta'\equiv_N \ovdelta$;
\item[(b)]
$\sum_{i=1}^m f_i^{\delta_i}=_{A^{B/N}}\sum_{i=1}^m f_i^{\delta_i'}$;
\item[(c)]
$\CC_\varphi(\ovf{}^{\ovdelta'})=\CC(\ovf{}^{\ovdelta'})$.
\end{itemize}
\end{lemma}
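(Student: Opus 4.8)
The plan is to reduce this lemma to Lemma \ref{le:cluster_equal} by processing the clusters modulo $N$ one at a time. Write $\CC_\varphi(\ovfdelta)=\{C_1,\ldots,C_n\}$, and for each $k$ let $\ovf_{C_k}$ denote the subsequence of $\ovf$ consisting of the functions $f_i$ with $i\in C_k$, carrying the shifts $(\delta_i)_{i\in C_k}$. Since a cluster modulo $N$ is a connected component of the cluster-graph $G_\varphi$, the subgraph of $G_\varphi(\ovf)$ induced on $C_k$ is connected; that is, $\ovf_{C_k}$ with its shifts forms a single cluster modulo $N$. Hence Lemma \ref{le:cluster_equal} applies to $\ovf_{C_k}$ (it is stated for the index set $\{1,\ldots,m\}$ but holds verbatim for any finite index set) and supplies shifts $(\delta_i')_{i\in C_k}$ with: $\delta_i'\equiv_N\delta_i$ for all $i\in C_k$; $\sum_{i\in C_k}f_i^{\delta_i}=_{A^{B/N}}\sum_{i\in C_k}f_i^{\delta_i'}$; and the functions $(f_i^{\delta_i'})_{i\in C_k}$ forming a single cluster in $A^B$. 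Because each index $i\in\{1,\ldots,m\}$ belongs to exactly one $C_k$, gluing these $n$ families of shifts together gives a single sequence $\ovdelta'=(\delta_1',\ldots,\delta_m')$, the intended candidate.

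Next I would verify the three conclusions. Condition (a) is immediate, since $\delta_i'\equiv_N\delta_i$ was arranged on each cluster and the clusters exhaust all indices. Condition (b) then follows from (a) exactly as in the proof of Lemma \ref{le:cluster_equal}, since shifting a function by an element of $N=\ker\varphi$ leaves its image in $A^{B/N}$ unchanged; alternatively, sum the cluster-wise equalities over $k$. For (c), I would combine three observations. First, $\ovdelta'\equiv_N\ovdelta$ gives $\varphi(\supp(f_i^{\delta_i'}))=\varphi(\supp(f_i^{\delta_i}))$ for every $i$, whence $G_\varphi(\ovf{}^{\ovdelta'})=G_\varphi(\ovf{}^{\ovdelta})$ and $\CC_\varphi(\ovf{}^{\ovdelta'})=\{C_1,\ldots,C_n\}$. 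Second, a point shared by two supports maps to a point shared by their images modulo $N$, so $J(\ovf{}^{\ovdelta'})\subseteq J_\varphi(\ovf{}^{\ovdelta'})$ and $\CC(\ovf{}^{\ovdelta'})$ is a non-cruder partition than $\CC_\varphi(\ovf{}^{\ovdelta'})$; hence every element of $\CC(\ovf{}^{\ovdelta'})$ is contained in some $C_k$. Third, by construction $C_k$ is connected in the $A^B$-cluster-graph $G(\ovf{}^{\ovdelta'})$, because an adjacency among the functions $(f_i^{\delta_i'})_{i\in C_k}$ is, by definition, also an adjacency in $G(\ovf{}^{\ovdelta'})$; hence $C_k$ lies in a single element of $\CC(\ovf{}^{\ovdelta'})$. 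The last two observations force each $C_k$ to be precisely one element of $\CC(\ovf{}^{\ovdelta'})$, i.e. $\CC(\ovf{}^{\ovdelta'})=\CC_\varphi(\ovf{}^{\ovdelta'})$, which is (c).

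The part requiring the most care — the routine estimates being trivial — is the interface between the per-cluster uses of Lemma \ref{le:cluster_equal} and the global reassembly, and three small points should be made explicit. One: the restriction of $\ovf$ to $C_k$ is really a single cluster modulo $N$, which holds because clusters are connected components and induced subgraphs on connected components are connected. Two: retuning the shifts inside one $C_k$ does not disturb any other cluster modulo $N$, nor the partition $\CC_\varphi$ itself, because the new shifts stay in the same $N$-cosets, so all supports modulo $N$ — hence all of $G_\varphi$ — are untouched; in particular the $n$ applications of Lemma \ref{le:cluster_equal} do not interfere. Three: clusters handled separately cannot accidentally fuse into one $A^B$-cluster, since $\CC(\cdot)$ always refines $\CC_\varphi(\cdot)$ and the $C_k$ are already distinct modulo $N$. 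Once these are recorded, the rest is bookkeeping.
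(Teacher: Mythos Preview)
Your proposal is correct and follows essentially the same approach as the paper: apply Lemma~\ref{le:cluster_equal} to each cluster $C_k\in\CC_\varphi(\ovfdelta)$ separately, noting that adjusting the shifts within one cluster leaves the other clusters (and the partition $\CC_\varphi$) unchanged, and then assemble the results. In fact your write-up is more careful than the paper's, which leaves the non-interference and non-fusion points implicit.
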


\begin{proof}
The transformation of $\delta_i$ from the proof of Lemma \ref{le:cluster_equal} 
applied to a single cluster does not change other clusters.
Consider the cluster $C_1\in\CC_\varphi(\ovfdelta)$. By \ref{le:cluster_equal}, 
there exists a sequence $\ovdelta_1'$ satisfying (a) and (b) such that $C_1\in\CC(\ovf{}^{\ovdelta_1'})$. 
We can proceed thus, applying \ref{le:cluster_equal} individually to each cluster $C_1,\ldots,C_n$. After termination of this process, we are guaranteed a sequence $\ovdelta_n'$ satisfying (a) and (b) such that $\CC(\ovf{}^{\ovdelta_n'})=\CC_\varphi(\ovf{}^{\ovdelta_n'})$, hence satisfying condition (c).
\end{proof}

\begin{cor}\label{cor:same_clusters}
For every solution $(\ovdelta, N)$ for \eqref{eq:qsp_shift},
there exists $\ovdelta'$ such that
\begin{itemize}
\item[(a)]
$(\ovdelta', N)$  satisfy \eqref{eq:qsp_shift};
\item[(b)]
$\CC(\ovf{}^{\ovdelta'})=\CC_\varphi(\ovf{}^{\ovdelta'})$.
\end{itemize}
\end{cor}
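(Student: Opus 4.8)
The plan is to deduce Corollary~\ref{cor:same_clusters} directly from Lemma~\ref{le:same_clusters} together with the fact that modifications coming from that lemma preserve the property of being a solution. First I would observe that a solution $(\ovdelta,N)$ for \eqref{eq:qsp_shift} consists of two pieces of data: the congruence class $\ovdelta \bmod N$ (which determines $\sum_i \varphi^\ast(f_i^{\delta_i})$) and the rank bound $\rank(N)\le h$, and that the latter depends only on $N$, not on $\ovdelta$. Since Lemma~\ref{le:same_clusters} produces a $\ovdelta'$ with the \emph{same} $N$ and with $\ovdelta'\equiv_N\ovdelta$, the rank condition is untouched; and condition (b) of that lemma says $\sum_{i=1}^m f_i^{\delta_i}=_{A^{B/N}}\sum_{i=1}^m f_i^{\delta_i'}$, so the first half of \eqref{eq:qsp_shift} is also preserved. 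Hence $(\ovdelta',N)$ again satisfies \eqref{eq:qsp_shift}, giving clause (a), while clause (c) of Lemma~\ref{le:same_clusters} is exactly clause (b) of the corollary.

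The only gap is the degenerate case: Lemma~\ref{le:same_clusters} is stated for $n\ge 2$ clusters, so I would handle $n\le 1$ separately. If $\CC_\varphi(\ovfdelta)$ is empty the statement is vacuous (or one may take $\ovdelta'=\ovdelta$), and if $\CC_\varphi(\ovfdelta)=\{C\}$ is a single cluster, Lemma~\ref{le:cluster_equal} directly supplies a $\ovdelta'\equiv_N\ovdelta$ with $\sum_i f_i^{\delta_i}=_{A^{B/N}}\sum_i f_i^{\delta_i'}$ and $\CC(\ovf{}^{\ovdelta'})=\{C\}=\CC_\varphi(\ovf{}^{\ovdelta'})$; the same bookkeeping as above shows $(\ovdelta',N)$ solves \eqref{eq:qsp_shift}. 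So in every case we obtain the desired $\ovdelta'$.

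I do not expect a genuine obstacle here — the corollary is essentially a repackaging of Lemmas~\ref{le:cluster_equal} and~\ref{le:same_clusters}. The one point that requires a little care (and is the closest thing to a ``main step'') is verifying that the transformations in those lemmas really do preserve membership in the solution set of \eqref{eq:qsp_shift}: one must check that $\ovdelta'\equiv_N\ovdelta$ suffices to keep the $A^{B/N}$-equation valid (immediate, since $\varphi^\ast$ factors through $B/N$) and that $N$ itself is never altered by the construction (which is the case, as Lemma~\ref{le:cluster_equal} only adjusts the $\delta_i$'s). Once these are noted, the corollary follows in a couple of lines.
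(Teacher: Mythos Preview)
Your proposal is correct and follows the same approach as the paper, which simply observes that the corollary is a special case of Lemma~\ref{le:same_clusters} with $\sum_i f_i^{\delta_i}=_{A^{B/N}}0$. You are in fact slightly more careful than the paper in treating the degenerate case $n\le 1$ separately via Lemma~\ref{le:cluster_equal}, since Lemma~\ref{le:same_clusters} is stated only for $n\ge 2$.
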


\begin{proof}
This is a special case of Lemma \ref{le:same_clusters},
where $\sum_{i=1}^m f_i^{\delta_i}=\sum_{i=1}^m f_i^{\delta_i'}=_{A^{B/N}}0$.
\end{proof}

\begin{lemma} \label{le:diam_size}
For any $f\in A^B$, $\diam(f)\le \size(f)$.
\end{lemma}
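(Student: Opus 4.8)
The plan is to unwind the definitions, bounding $\diam(f)$ by a sum of geodesic lengths of the support points of $f$ in $B$, and then bounding each such geodesic length by the corresponding part of $\size(f)$.

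First I would set $\supp(f)=\{b_1,\dots,b_m\}$ with $f(b_i)=a_i$, so that $\diam(f)=\diam(\{b_1,\dots,b_m\})$ by definition. If $m\le 1$, then $\diam(f)=0\le\size(f)$ and there is nothing to prove, so assume $m\ge 2$. Since $\{b_1,\dots,b_m\}$ is finite, the supremum defining the diameter is attained, say $\diam(f)=d(b_i,b_j)$ for some indices $i,j$. By translation-invariance of the word metric on $B$ together with the triangle inequality (and the fact that geodesic length is unchanged under inversion), $d(b_i,b_j)=|b_j-b_i|\le|b_i|+|b_j|$, where $|\cdot|$ denotes geodesic length in $B$ with respect to its fixed generating set.

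Next I would relate $|b_l|$ to $\size(b_l)$. Recall that an element $b\in B$ is given by an integer vector whose entries are written in unary, with $\size(b)$ equal to the sum of the absolute values of those entries; this vector encodes $b$ as a product of the generators of $B$ and their inverses taken with the prescribed multiplicities, i.e.\ as a word of length $\size(b)$ over the generators of $B$ representing $b$. Hence $|b|\le\size(b)$ for every $b\in B$. Combining this with the previous paragraph and the nonnegativity of every $\size(a_l)$ and $\size(b_l)$ yields
$$
\diam(f)=d(b_i,b_j)\le|b_i|+|b_j|\le\size(b_i)+\size(b_j)\le\sum_{l=1}^m\bigl(\size(a_l)+\size(b_l)\bigr)=\size(f),
$$
as required.

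I do not expect any genuine obstacle here; the only step deserving a word of explanation is the elementary observation that the unary integer-vector representation of an element of $B$ directly exhibits a spelling of that element over the generating set of $B$, so that geodesic length is bounded above by representation size. Everything else reduces to the triangle inequality and nonnegativity of $\size$.
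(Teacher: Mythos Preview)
Your proof is correct and is exactly what the paper has in mind: the paper's proof is the one-liner ``Follows directly from the definition of $\size(f)$,'' and your argument is precisely the routine unwinding of that definition via the triangle inequality and the observation that $|b|\le\size(b)$ for $b\in B$.
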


\begin{proof}
Follows directly from the definition of $\size(f)$.
\end{proof}

\begin{lemma}\label{le:bounded_delta_size}
For any $(\ovdelta, N)$ satisfying \eqref{eq:qsp_shift}, 
there exists $\ovdelta'$ such that
\begin{itemize}
\item[(a)] 
$(\ovdelta', N)$ satisfies \eqref{eq:qsp_shift};
\item[(b)] 
$|\delta_i'|\le \sum_{i=1}^m \size(f_i)$.
\end{itemize}
\end{lemma}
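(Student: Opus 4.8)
The plan is to translate each cluster of the given solution independently so that it straddles the identity of $B$, and then to read the bound off from Lemma~\ref{le:diameterbounded}. First I would normalize the input: we may assume $\supp(f_i)\neq\varnothing$ for every $i$, since if $f_i=0$ it plays no role in \eqref{eq:qsp_shift} and we may simply set $\delta_i'=0$. Then, by Corollary~\ref{cor:same_clusters}, I would replace the given $(\ovdelta,N)$ by a solution of \eqref{eq:qsp_shift} with the same $N$ for which $\CC(\ovfdelta)=\CC_\varphi(\ovfdelta)=\{C_1,\dots,C_r\}$. This reduction is the point where the cluster machinery earns its keep: it guarantees that functions lying in a common cluster modulo $N$ actually have $B$-supports forming a connected cluster-graph in $A^B$, which is exactly what makes their combined diameter controllable.

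Next I would construct $\ovdelta'$. For each cluster $C_t$ fix an index $i_t\in C_t$ and a point $p_t\in\supp(f_{i_t})$, put $\Delta_t:=p_t-\delta_{i_t}\in B$, and set $\delta_i':=\delta_i+\Delta_t$ for every $i\in C_t$. Then $\delta_{i_t}'=p_t$, so $0\in\supp(f_{i_t}^{\delta_{i_t}'})$. For part (a): since $\sum_{i\in C_t}f_i^{\delta_i'}=\bigl(\sum_{i\in C_t}f_i^{\delta_i}\bigr)\cdot 1^{\Delta_t}$ and $\varphi^\ast\bigl(\sum_{i\in C_t}f_i^{\delta_i}\bigr)=_{A^{B/N}}0$ by Lemma~\ref{le:clustersum}, applying the ring homomorphism $\varphi^\ast$ (exactly as in the proof of Lemma~\ref{le:cluster-shift}) shows that each cluster-block still vanishes in $A^{B/N}$; summing over $t$ gives $\sum_{i=1}^m f_i^{\delta_i'}=_{A^{B/N}}0$, while $N$, hence $\rank(N)\le h$, is unchanged. (This is just Lemma~\ref{le:cluster-shift} applied simultaneously to all clusters.)

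It remains to prove the bound (b). Fix $i\in C_t$. Because $\{f_j^{\delta_j'}:j\in C_t\}$ is obtained from $\{f_j^{\delta_j}:j\in C_t\}$ by the single shift $\Delta_t$, the pairwise support intersections are merely translated, so these functions still form a connected cluster-graph in $A^B$. Pick a spanning tree of it, let $i_t=j_0,j_1,\dots,j_l=i$ be the path from $i_t$ to $i$, choose $y_s\in\supp(f_{j_s}^{\delta_{j_s}'})\cap\supp(f_{j_{s+1}}^{\delta_{j_{s+1}}'})$ for $0\le s\le l-1$, and set $y_{-1}:=0\in\supp(f_{j_0}^{\delta_{j_0}'})$. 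For each $s\in\{0,\dots,l-1\}$ both $y_{s-1}$ and $y_s$ lie in $\supp(f_{j_s}^{\delta_{j_s}'})$, hence $d(y_{s-1},y_s)\le\diam(f_{j_s}^{\delta_{j_s}'})=\diam(f_{j_s})$. Telescoping, using that $j_0,\dots,j_{l-1}$ are distinct and all different from $i$, and invoking Lemma~\ref{le:diam_size}, I get
\[
|y_{l-1}|=d(0,y_{l-1})\le\sum_{s=0}^{l-1}\diam(f_{j_s})\le\sum_{j\in C_t\setminus\{i\}}\size(f_j).
\]
Since $y_{l-1}\in\supp(f_i^{\delta_i'})=\supp(f_i)-\delta_i'$, the element $y_{l-1}+\delta_i'$ lies in $\supp(f_i)$, so $|y_{l-1}+\delta_i'|\le\size(f_i)$; therefore $|\delta_i'|\le|y_{l-1}+\delta_i'|+|y_{l-1}|\le\size(f_i)+\sum_{j\in C_t\setminus\{i\}}\size(f_j)=\sum_{j\in C_t}\size(f_j)\le\sum_{j=1}^m\size(f_j)$. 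The case $i=i_t$ (empty path) is the base step: $|\delta_{i_t}'|=|p_t|\le\size(f_{i_t})$.

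The step I expect to be the main obstacle is squeezing out the exact constant, with no spurious factor of $2$. Two choices are what make it work: translating each cluster so that $0$ genuinely lands in $\supp(f_{i_t}^{\delta_{i_t}'})$ (not merely near the cluster), and, in the telescoping estimate for a given $i$, stopping the path at the point $y_{l-1}$ where it first meets $\supp(f_i^{\delta_i'})$ rather than crossing that support — so that $\diam(f_i)$ is not counted, and the last leg is instead charged to $\size(f_i)$ via $y_{l-1}+\delta_i'\in\supp(f_i)$. Working over an arbitrary finitely generated abelian group $B$, which carries no linear order, is handled throughout by phrasing everything via balls, diameters, and Lemma~\ref{le:diameterbounded} rather than via intervals.
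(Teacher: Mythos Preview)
Your proof is correct and follows the same strategy as the paper: normalize via Corollary~\ref{cor:same_clusters}, shift each cluster so that $0$ lies in its combined support (Lemma~\ref{le:cluster-shift}), and then bound $|\delta_i'|$ using cluster diameters together with Lemma~\ref{le:diam_size}. Your explicit spanning-tree path argument for part~(b) unpacks what the paper compresses into the single chain $|\delta_j'|\le\diam(C_i,\ovf^{\,\ovdelta'})\le\sum_{i\in C}\diam(f_i)\le\sum_{i\in C}\size(f_i)$, and is in fact a touch more careful about justifying the first of those inequalities.
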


\begin{proof}
By Corollary \ref{cor:same_clusters}, 
we may assume that 
$\CC(\ovfdelta)=\CC_\varphi(\ovfdelta) = \{C_1,\dots,C_n\}$. 
For each $i=1,\dots,n$ choose an arbitrary 
$$
\Delta_i\in\bigcup_{j\in C_i}\supp\bigl(f_j^{\delta_{j}}\bigr)
$$
and define
$\ovdelta'=\ovdelta - \sum_{i=1}^k \Delta_i^{(C_i)}.$ 
By Lemma \ref{le:cluster-shift}, $(\ovdelta',N)$ satisfies
\eqref{eq:qsp_shift} and by construction
\begin{equation}\label{eq:zero-in-cluster}
0\in\bigcup_{j\in C_i}\supp\bigl(f_j^{\delta_{j}'}\bigr)
\end{equation}
for every $i=1,\dots,n$.
Therefore, for every $j\in C_i$ we have
$$\belowdisplayskip=-12pt
|\delta_j'| 
\stackrel{\eqref{eq:zero-in-cluster}}{\le}
\diam\bigl(C_i,\ovf{}^{\ovdelta'}\bigr) 
\stackrel{\ref{le:diameterbounded}}{\le} 
\sum_{i\in C}\diam(f_i^{\delta_i}) =
\sum_{i\in C}\diam(f_i) 
\stackrel{\ref{le:diam_size}}{\le}\sum_{i\in C}\size(f_i).
$$
\end{proof}

In the next lemma we use two subgroups $N,N'\unlhd B$ and
the corresponding functions
$\varphi_N^\ast: A^B\to A^{B/N}$ and
$\varphi_{N'}^\ast: A^B\to A^{B/N'}$.

\begin{lemma}\label{le:good_subgroup}
Let $f\in A^B$, $N\le B$, $S=\Set{p_i-p_j}{p_i,p_j\in\supp(f)}\subseteq B$, 
and $N'=\gp{N\cap S}\le N$. Then
$$
f=_{A^{B/N}}0 \ \ \Leftrightarrow \ \ f=_{A^{B/N'}}0.
$$
\end{lemma}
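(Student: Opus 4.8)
The plan is to reduce the statement to a combinatorial fact about how $N$ and $N'$ partition the finite set $\supp(f)$. Write $\varphi_N\colon B\to B/N$ and $\varphi_{N'}\colon B\to B/N'$ for the canonical epimorphisms. The implication $f=_{A^{B/N'}}0\Rightarrow f=_{A^{B/N}}0$ is immediate, since $N'\le N$ forces $\varphi_N$ to factor through $\varphi_{N'}$, so $\varphi_N^\ast(f)$ is the image of $\varphi_{N'}^\ast(f)$ under the induced epimorphism $A^{B/N'}\to A^{B/N}$, and hence is $0$ whenever $\varphi_{N'}^\ast(f)=0$. The content is the reverse implication.

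For a subgroup $M\le B$, let $\mathcal P_M$ denote the partition of $\supp(f)$ whose blocks are the nonempty sets of the form $c\cap\supp(f)$ with $c$ a coset of $M$. Unwinding the definition $\varphi_M^\ast(f)(x)=\sum_{\varphi_M(y)=x}f(y)$ and using that $f$ vanishes off $\supp(f)$, one gets the routine equivalence
$$
f=_{A^{B/M}}0
\quad\Longleftrightarrow\quad
\sum_{p\in P}f(p)=0\ \text{ for every block }P\in\mathcal P_M .
$$
Applying this with $M=N$ and with $M=N'$, it suffices to prove $\mathcal P_N=\mathcal P_{N'}$.

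The key step is to show that the equivalence relations ``lie in a common $N$-coset'' and ``lie in a common $N'$-coset'' agree on $\supp(f)$. One direction is trivial from $N'\le N$. Conversely, if $p,q\in\supp(f)$ satisfy $p-q\in N$, then by the very definition of $S$ we also have $p-q\in S$, hence $p-q\in N\cap S\subseteq\gp{N\cap S}=N'$, so $p$ and $q$ lie in a common $N'$-coset. Thus the two relations coincide, giving $\mathcal P_N=\mathcal P_{N'}$ and completing the argument. There is no serious obstacle here: the only point requiring any care is the observation that the coordinates of $\varphi_M^\ast(f)$ are sums taken only over support elements, which is exactly what licenses passing from the condition $\varphi_M^\ast(f)=0$ to a condition about the partition $\mathcal P_M$; once $\mathcal P_N=\mathcal P_{N'}$ is in hand the equivalence is formal.
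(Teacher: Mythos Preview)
Your proof is correct and follows essentially the same approach as the paper: both arguments hinge on the observation that for $p,q\in\supp(f)$ one has $p-q\in N\Leftrightarrow p-q\in N'$ (since $p-q\in S$ automatically), so the partitions of $\supp(f)$ by $N$-cosets and by $N'$-cosets coincide. The paper phrases this directly as $\varphi_N^\ast(f)(p+N)=\varphi_{N'}^\ast(f)(p+N')$ for $p\in\supp(f)$, whereas you spell out the partition-and-block-sums reformulation explicitly, but the content is identical.
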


\begin{proof}
Observe that for any $p_i,p_j\in\supp(f)$,
\begin{align*}
p_i=_{B/N} p_j \ \ & \Leftrightarrow \ \ p_i-p_j=_{B/N} 0 \\
& \Leftrightarrow \ \ p_i-p_j\in N \\
& \Leftrightarrow \ \ p_i-p_j\in N' && \mbox{(since $p_i-p_j\in S$)}\\
& \Leftrightarrow \ \ p_i=_{B/N'}p_j.
\end{align*}
Therefore,
$\varphi_N^\ast(f)(p+N)=\varphi_{N'}^\ast(f)(p+N')$ for any $p\in \supp(f)$. 
Since $\varphi_N^\ast(f)$ is trivial, $\varphi_{N'}^\ast(f)$ must be trivial too.
\end{proof}

\begin{theorem}\label{th:qsp_np}
$\QSP(\cdot,\cdot,\cdot,\cdot)$ is in $\NP$.
\end{theorem}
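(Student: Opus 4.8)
The plan is to exhibit, for every positive instance $I=(A,B,\ovf,h)$ with $\ovf=(f_1,\dots,f_m)$, a certificate of size polynomial in $\size(I)$ that is checkable in polynomial time; it will consist of a shift sequence $\ovdelta$ together with a finite subset $T\subseteq B$, and the subgroup witnessing \eqref{eq:qsp_shift} will be $N=\gp{T}$.

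First I would bound the certificate. By Lemma~\ref{le:bounded_delta_size}, if \eqref{eq:qsp_shift} has a solution then it has one $(\ovdelta,N)$ with $|\delta_i|\le M$, where $M:=\sum_{i=1}^m\size(f_i)\le\size(I)$. Put $f=\sum_{i=1}^m f_i^{\delta_i}$ and $S=\Set{p-q}{p,q\in\supp(f)}$. Since $\supp(f)\subseteq\bigcup_i\supp(f_i^{\delta_i})$ and shifting is a bijection of $B$, we have $|\supp(f)|\le\sum_i|\supp(f_i)|\le M$, and every point of $\supp(f)$ has length at most $\size(f_i)+|\delta_i|\le 2M$; hence $|S|\le M^2$ and every element of $S$ has length at most $4M$. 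Applying Lemma~\ref{le:good_subgroup} to $f$, we may replace $N$ by $N'=\gp{N\cap S}$: this preserves $f=_{A^{B/N'}}0$, and $\rank(N')\le\rank(N)\le h$ since $N'\le N$. As $N'$ is then generated by $T:=N'\cap S\subseteq S$, the pair $(\ovdelta,T)$ is a certificate whose size is polynomial in $\size(I)$, and every positive instance possesses one.

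The verifier, given $(\ovdelta,T)$, would: (i) compute $f=\sum_{i=1}^m f_i^{\delta_i}$ and $S$, and check $|\delta_i|\le M$ and $T\subseteq S$; (ii) form $N=\gp{T}\le B$ and compute $\rank(N)$ by the Smith-normal-form procedure recalled in Section~\ref{se:rank} (see \cite{Storjohann:1996}), checking $\rank(N)\le h$; (iii) verify $f=_{A^{B/N}}0$ by partitioning $\supp(f)$ into $N$-cosets and checking that, over each coset, the values of $f$ sum to $0$ in $A$. Deciding $p-q\in N$ for $p,q\in\supp(f)$ amounts to membership in the subgroup of $\MZ^n$ generated by $T$ together with the defining relations of $B$, which is decidable in polynomial time by integer linear algebra (Hermite normal form). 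Since $|\supp(f)|$, $|S|$, $|T|$ and all integer entries that arise are polynomially bounded, the whole check runs in polynomial time.

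Correctness is then immediate: if the verifier accepts, $\ovdelta$ and $N=\gp{T}$ satisfy \eqref{eq:qsp_shift} with $\rank(N)\le h$, so $I$ is positive; conversely the reductions above convert any solution into an accepting certificate. The \emph{main obstacle} is the size bookkeeping — verifying that imposing the Lemma~\ref{le:bounded_delta_size} bound on $\ovdelta$ really does keep $\supp(f)$, hence $S$ and $T$, polynomially bounded, and that rank computation and coset-membership in the finitely presented abelian group $B$ are genuinely polynomial-time; both of the latter reduce to integer linear algebra (Smith, respectively Hermite, normal form).
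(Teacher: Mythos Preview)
Your proof is correct and follows essentially the same approach as the paper: bound the shifts via Lemma~\ref{le:bounded_delta_size}, then use Lemma~\ref{le:good_subgroup} to replace $N$ by the subgroup generated by $N\cap S$, where $S$ is the difference set of $\supp(f)$, yielding a polynomially bounded certificate $(\ovdelta,T)$. You supply somewhat more explicit bounds and more detail on the verification steps (Smith/Hermite normal form for rank and coset membership) than the paper, which simply refers to Section~\ref{se:rank}; the one minor redundancy is defining $T:=N'\cap S$ rather than directly $T:=N\cap S$, but since $\gp{N'\cap S}=N'$ this makes no difference.
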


\begin{proof}
Let $I=(A,B,\ovf,h)$ be a positive instance of $\QSP$. 
Then \eqref{eq:qsp_shift} has a solution $(\ovdelta,N)$
for some $\ovdelta=(\delta_1,\dots,\delta_m)$.
By Lemma \ref{le:bounded_delta_size}, we may assume that
$|\delta_i|\le \sum_{i=1}^m \size(f_i)$
for every $i=1,\dots,m$.
For the function $f=\sum_{i=1}^m f_i^{\delta_i}\in A^B$ 
define a set
$$
S=\Set{p_i-p_j}{p_i, p_j\in\supp(f)} \subseteq B,
$$
and a subgroup $N'=\gp{S\cap N}$.
Notice that $\rank(N')\le\rank(N)\le h$ because $N'\le N$ and
\begin{align*}
(\ovdelta,N) \mbox{ satisfies \eqref{eq:qsp_shift}} 
\ \Rightarrow\ 
f=_{A^{B/N}}0
\stackrel{\ref{le:good_subgroup}}{\ \Rightarrow\ }
f=_{A^{B/N'}}0
\ \Rightarrow\ 
(\ovdelta,N') \mbox{ satisfies \eqref{eq:qsp_shift}}.
\end{align*}
Furthermore, $|p|\le\size(f)$ for every $p\in S$, since $p=p_i-p_j$ 
for some $p_i,p_j\in\supp(f)$. 
Thus, $N'$ has a generating set $S\cap N$ that consists of short elements of $B$.
Now it is easy to see that
$(\ovdelta,N')$ is an $\NP$-certificate for $I$
because both parts of \eqref{eq:qsp_shift}, namely
$f=_{A^{B/N'}}0$ and $\rank(N')\le h$
can be checked in polynomial time
(see Section \ref{se:rank}).
\end{proof}

\subsection{Quotient-rank bound}
\label{se:quotient-rank-bounds}

By Theorem \ref{th:QSP-infinite-B}, 
$\QSP(A,B,\cdot,h=0)$ is $\NP$-hard when $\MZ\le B$.
In this section we investigate hardness of $\QSP$
when the quotient-rank bound $h$ is greater than $0$.
Recall that by $\rank(B)$ we denote the minimum number of generators for $B$.
First, we show that the problem is efficiently decidable
when the value of the quotient-rank bound $h$ is high.

\begin{theorem}\label{th:big-h}
$\QSP(\cdot,B,\cdot,\cdot_{h\ge \rank(B)}) \in\P$.
\end{theorem}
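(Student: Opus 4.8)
$\QSP(\cdot,B,\cdot,\cdot_{h\ge \rank(B)}) \in\P$.

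\textbf{Proof proposal.} The plan is to observe that the quotient-rank budget $h \ge \rank(B)$ is large enough to allow the choice $N = B$, which collapses condition \eqref{eq:qsp_shift} to the trivial necessary condition of Lemma \ref{le:trivial-sum}. Concretely, I claim that an instance $I = (A,B,\ovf,h)$ with $h \ge \rank(B)$ and $\ovf = (f_1,\dots,f_m)$ is positive if and only if
\[
\sum_{i=1}^m \sum_{x\in\supp(f_i)} f_i(x) = 0 \quad \text{in } A.
\]
The forward implication is exactly Lemma \ref{le:trivial-sum}. For the converse, take $N = B$ and $\ovdelta = (0,\dots,0)$ (any $\ovdelta$ works, since the left-hand side of \eqref{eq:qsp_shift} then does not depend on $\ovdelta$). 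Then $\rank(N) = \rank(B) \le h$, the quotient $B/N$ is trivial, and $A^{B/N}\simeq A$ with the canonical map $\varphi^\ast\colon A^B \to A^{B/B}\simeq A$ sending $g \mapsto \sum_{x\in\supp(g)} g(x)$. Since shifting does not change the sum of a function's values, $\varphi^\ast\big(\sum_i f_i^{\delta_i}\big) = \sum_i\sum_{x\in\supp(f_i)} f_i(x) = 0$, so $(\ovdelta, N)$ satisfies \eqref{eq:qsp_shift} and $I$ is positive.

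It remains to note that this criterion is checkable in polynomial time in $\size(I)$: each $f_i$ is given by its finite support together with values $f_i(x)\in A$ written in unary, so the total sum $\sum_i\sum_{x} f_i(x)$ is a single element of $A$ whose size is at most $\size(\ovf)$ and which can be computed (and compared to $0$ in the finitely generated abelian group $A$, using e.g. the Smith normal form machinery of Section \ref{se:rank}) in polynomial time. This establishes membership in $\P$.

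The argument has essentially no obstacle; the only point worth isolating is the observation that $h \ge \rank(B)$ permits $N = B$, after which everything reduces to a single addition in $A$. (One could equally phrase this as: under the stated bound on $h$, the quotient $A^{B/N}$ can be made as small as $A$, so the only surviving constraint is the one detected by the homomorphism $A^B \to A$ summing all values.)
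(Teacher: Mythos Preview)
Your proof is correct and follows essentially the same approach as the paper: both reduce the question to the criterion $\sum_i\sum_x f_i(x)=0$ in $A$, using Lemma~\ref{le:trivial-sum} for the forward direction and the choice $N=B$ (allowed since $h\ge\rank(B)$) for the converse. Your added remark that equality to $0$ in $A$ is testable via Smith normal form is a slight elaboration of the paper's ``clearly checkable in polynomial time,'' but the arguments are otherwise identical.
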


\begin{proof}
Consider an instance of $I=(A,B,\ovf,h)$ of $\QSP$, where 
$\ovf=(f_1,\dots,f_m)$ and $h\ge \rank(B)$.
We claim that
$$
I \mbox{ has a solution }
\ \ \Leftrightarrow\ \ 
\sum_{i=1}^m \sum_{x\in\supp(f_{i})} f_{i}(x)=0.
$$
Clearly, the latter condition can be checked in polynomial time.

``$\Rightarrow$'' 
Follows from Lemma \ref{le:trivial-sum}.

``$\Leftarrow$'' 
Since $h\ge \rank(B)$ we can choose the quotient $N=B$.
In that case the domain of $\sum_{i=1}^m f_i^{\delta_i}$ is 
$B/N=\{1\}$ and its single value is exactly
$\sum_{i=1}^m \sum_{x\in\supp(f_{i})} f_{i}(x)=0$.
\end{proof}

Next we show that the bound $h\ge\rank(B)$ cannot be improved, 
i.e., the problem is $\NP$-hard when $h=\rank(B)-1$
for some groups $B$. 
To do so we generalize the proof of Proposition \ref{pr:qsp_tpart}
and relate instances of $\TPART$ to instances of $\QSP$.
Fix $A=\MZ$ and $B=\MZ^{h}$. 
Let $T=\{t_1,\ldots,t_{3k}\}$ be an instance of $\TPART$
and $L = \frac{1}{k}\sum_{i=1}^{3k} t_i$ be the target sum for subsets.
Let $M=k(L+1)$, which is clearly greater than $\sum_{i=1}^{3k} t_i$.
Fix the standard basis $b_1,\dots,b_{h}$ for $B=\MZ^{h}$.
Let $a=1\in A=\MZ$. Define
\begin{equation}\label{eq:mid-h}
c_y = \sum_{i=0}^{y-1} a^{-i\cdot b_{h}} \mbox{ (for $y\in\MN$)} \ \ 
\mbox{ and }\ \ 
c=\sum_{i=0}^{k-1} 
c_L^{-i(L+1)\cdot b_h}
+
\sum_{i=1}^{h-1} 2^{i}M(a^{b_{i}} - a^{2\cdot b_{i}})
\end{equation}
Let $\ovc=(c_{t_1},\ldots,c_{t_{3k}},-c)$.
\begin{figure}[!h]
\centering
\begin{tikzpicture}
    \coordinate (O) at (0,0,0);
    \draw[thick,-] (0,0,0) -- (1.25,0,0);
  \draw[thick,->] (1.75,0,0) -- (5,0,0) node[left=-0.75]{$b_3$};
  \draw[thick,->] (0,0,0) -- (0,2,0) node[right=0.25]{$b_1$};
  \draw[thick,->] (0,0,0) -- (0,0,3) node[above=-0.75]{$b_2$};

  \draw[-] (0.5,-0.1)--(0.5,0.1);
 \draw[-] (1,-0.1)--(1,0.1);
 \draw[-] (2,-0.1)--(2,0.1);
 \draw[-] (2.5,-0.1)--(2.5,0.1);
 \draw[-] (3,-0.1)--(3,0.1);
 \draw[-] (3.5,-0.1)--(3.5,0.1);
 \draw[-] (4,-0.1)--(4,0.1);
 \draw[-] (4.5,-0.1)--(4.5,0.1);

 \draw[-] (-0.1,0.5,0)--(0.1,0.5,0);
 \draw[-] (-0.1,1,0)--(0.1,1,0);
 \draw[-] (-0.1,1.5,0)--(0.1,1.5,0);

 \draw[-] (-0.25,-0.1,0.25)--(-0.05,-0.25,0.25);
 \draw[-] (-0.25,-0.1,1)--(-0.05,-0.25,1);
 \draw[-] (-0.25,-0.1,1.75)--(-0.05,-0.25,1.75);

 \filldraw[draw=black,fill=black] (0,0,0) circle (2pt);
 \filldraw[draw=black,fill=black] (0.5,0,0) circle (2pt);
 \filldraw[draw=black,fill=black] (1,0,0) circle (2pt);
 \filldraw[draw=black,fill=black] (2,0,0) circle (2pt);
 \filldraw[draw=black,fill=black] (2.5,0,0) circle (2pt);
 \filldraw[draw=black,fill=white] (3,0,0) circle (2pt);
 \filldraw[draw=black,fill=white] (3.5,0,0) circle (2pt);
 \filldraw[draw=black,fill=white] (4,0,0) circle (2pt);
 \filldraw[draw=black,fill=white] (4.5,0,0) circle (2pt);

 \filldraw[draw=black,fill=white] (0,0.5,0) circle (2pt);
 \filldraw[draw=black,fill=white] (0,1,0) circle (2pt);
 \filldraw[draw=black,fill=white] (0,1.5,0) circle (2pt);

 \filldraw[draw=black,fill=white] (-0.15,-0.175,0.25) circle (2pt);
 \filldraw[draw=black,fill=white] (-0.15,-0.175,1) circle (2pt);
 \filldraw[draw=black,fill=white] (-0.15,-0.175,1.75) circle (2pt);

        \draw [-] (-0.25,0,0) to (-0.25,0.25,0);
        \draw [-] (2.75,0,0) to (2.75,0.25,0);
        \draw [-] (-0.25,0.25,0) to (2.75,0.25,0);

        \node[draw=none] at (1.5,0) {\tiny$\cdots$};
        \node[draw=none] at (1.25,0.5,0) {\footnotesize{$y$ lamps}};
\end{tikzpicture}
\caption{Schematic picture for the lamp configuration in $c_y$ defined by \eqref{eq:mid-h}, where $h=3$.}
\end{figure}

\begin{figure}[!h]
\centering
\scalebox{0.85}{
\begin{tikzpicture}
    \coordinate (O) at (0,0,0);
    \draw[thick,-] (0,0,0) -- (1.25,0,0);
    \draw[thick,-] (1.75,0,0) -- (4.75,0,0);
    \draw[thick,-] (5.25,0,0) -- (6.75,0,0);
    \draw[thick,-] (7.25,0,0) -- (9.25,0,0);
  \draw[thick,->] (9.75,0,0) -- (11.5,0,0) node[left=-0.75]{$b_3$};
  \draw[thick,->] (0,0,0) -- (0,2,0) node[right=0.25]{$b_1$};
  \draw[thick,->] (0,0,0) -- (0,0,3) node[above=-0.75]{$b_2$};

  \draw[-] (0.5,-0.1)--(0.5,0.1);
 \draw[-] (1,-0.1)--(1,0.1);
 \draw[-] (2,-0.1)--(2,0.1);
 \draw[-] (2.5,-0.1)--(2.5,0.1);
 \draw[-] (3,-0.1)--(3,0.1);
 \draw[-] (3.5,-0.1)--(3.5,0.1);
 \draw[-] (4,-0.1)--(4,0.1);
 \draw[-] (4.5,-0.1)--(4.5,0.1);
 \draw[-] (5.5,-0.1)--(5.5,0.1);
 \draw[-] (6,-0.1)--(6,0.1);
 \draw[-] (6.5,-0.1)--(6.5,0.1);
 \draw[-] (7.5,-0.1)--(7.5,0.1);
 \draw[-] (8,-0.1)--(8,0.1);
 \draw[-] (8.5,-0.1)--(8.5,0.1);
 \draw[-] (9,-0.1)--(9,0.1);
 \draw[-] (10,-0.1)--(10,0.1);
 \draw[-] (10.5,-0.1)--(10.5,0.1);
 \draw[-] (11,-0.1)--(11,0.1);

 \draw[-] (-0.1,0.5,0)--(0.1,0.5,0);
 \draw[-] (-0.1,1,0)--(0.1,1,0);
 \draw[-] (-0.1,1.5,0)--(0.1,1.5,0);

 \draw[-] (-0.25,-0.1,0.25)--(-0.05,-0.25,0.25);
 \draw[-] (-0.25,-0.1,1)--(-0.05,-0.25,1);
 \draw[-] (-0.25,-0.1,1.75)--(-0.05,-0.25,1.75);

 \filldraw[draw=black,fill=black] (0,0,0) circle (2pt);
 \filldraw[draw=black,fill=black] (0.5,0,0) circle (2pt);
 \filldraw[draw=black,fill=black] (1,0,0) circle (2pt);
 \filldraw[draw=black,fill=black] (2,0,0) circle (2pt);
 \filldraw[draw=black,fill=black] (2.5,0,0) circle (2pt);
 \filldraw[draw=black,fill=white] (3,0,0) circle (2pt);
 \filldraw[draw=black,fill=black] (3.5,0,0) circle (2pt);
 \filldraw[draw=black,fill=black] (4,0,0) circle (2pt);
 \filldraw[draw=black,fill=black] (4.5,0,0) circle (2pt);
 \filldraw[draw=black,fill=black] (5.5,0,0) circle (2pt);
 \filldraw[draw=black,fill=black] (6,0,0) circle (2pt);
 \filldraw[draw=black,fill=white] (6.5,0,0) circle (2pt);
 \filldraw[draw=black,fill=white] (7.5,0,0) circle (2pt);
 \filldraw[draw=black,fill=black] (8,0,0) circle (2pt);
 \filldraw[draw=black,fill=black] (8.5,0,0) circle (2pt);
 \filldraw[draw=black,fill=black] (9,0,0) circle (2pt);
 \filldraw[draw=black,fill=black] (10,0,0) circle (2pt);
 \filldraw[draw=black,fill=black] (10.5,0,0) circle (2pt);
 \filldraw[draw=black,fill=white] (11,0,0) circle (2pt);

 \filldraw[draw=black,fill=black] (0,0.5,0) circle (2pt);
 \filldraw[draw=black,fill=black] (0,1,0) circle (2pt);
 \filldraw[draw=black,fill=white] (0,1.5,0) circle (2pt);

 \filldraw[draw=black,fill=black] (-0.15,-0.175,0.25) circle (2pt);
 \filldraw[draw=black,fill=black] (-0.15,-0.175,1) circle (2pt);
 \filldraw[draw=black,fill=white] (-0.15,-0.175,1.75) circle (2pt);

        \draw [-] (-0.25,0,0) to (-0.25,0.25,0);
        \draw [-] (2.75,0,0) to (2.75,0.25,0);
        \draw [-] (-0.25,0.25,0) to (2.75,0.25,0);

        \draw [-] (-0.25+3.5,0,0) to (-0.25+3.5,0.25,0);
        \draw [-] (2.75+3.5,0,0) to (2.75+3.5,0.25,0);
        \draw [-] (-0.25+3.5,0.25,0) to (2.75+3.5,0.25,0);

        \draw [-] (-0.25+8,0,0) to (-0.25+8,0.25,0);
        \draw [-] (2.75+8,0,0) to (2.75+8,0.25,0);
        \draw [-] (-0.25+8,0.25,0) to (2.75+8,0.25,0);

        \node[draw=none] at (1.5,0) {\tiny$\cdots$};
        \node[draw=none] at (5,0) {\tiny$\cdots$};
        \node[draw=none] at (7,0) {\tiny$\cdots$};
        \node[draw=none] at (9.5,0) {\tiny$\cdots$};
        \node[draw=none] at (1.25,0.5,0) {\footnotesize{$L$ lamps}};
        \node[draw=none] at (1.25+3.5,0.5,0) {\footnotesize{$L$ lamps}};
        \node[draw=none] at (1.25+8,0.5,0) {\footnotesize{$L$ lamps}};

        \node[draw=none] at (-0.85,0.5,0) {\footnotesize{$2M$}};
        \node[draw=none] at (-1,1,0) {\footnotesize{$-2M$}};
        \draw[dashed] (-0.5,1,0)-- (0,1,0);
        \draw[dashed] (-0.5,0.5,0)-- (0,0.5,0);

        \node[draw=none] at (-0.15+0.5,-0.175-0.15,0.25) {\footnotesize{$4M$}};
        \node[draw=none] at (-0.15+0.5,-0.175-0.3,1) {\footnotesize{$-4M$}};
        \draw [decorate,decoration={brace,amplitude=10pt,raise=5ex}]
  (1.5,0,0) -- (9.5,0,0) node[midway,yshift=4em]{\footnotesize{$k$ groups}};
	\end{tikzpicture}
}
\caption{Schematic picture for the lamp configuration 
in $c$ defined by \eqref{eq:mid-h}, where $h=3$.}
\label{fi:element-c}
\end{figure}

\begin{proposition}\label{pr:mid-h}
$T$ is a positive instance of $\TPART$ if and only if 
$(\MZ,\MZ^h,\ovc,h-1)$ is a positive instance of $\QSP$.
\end{proposition}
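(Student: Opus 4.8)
The plan is to adapt the tiling reduction of Proposition~\ref{pr:qsp_tpart}, treating the summands $\sum_{i=1}^{h-1}2^iM(a^{b_i}-a^{2b_i})$ of $c$ from \eqref{eq:mid-h} as a gadget that forces any admissible quotient $N$ to contain $b_1,\dots,b_{h-1}$; once this is known, $B/N$ is infinite cyclic and generated by the image of $b_h$, and the problem collapses to the one-dimensional tiling along the $b_h$-axis, just as in the lamplighter case.

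\textbf{Forward direction.} Suppose $T=T_1\sqcup\cdots\sqcup T_k$ is a $3$-partition with $\sum_{t\in T_j}t=L$ for each $j$. I would take $N=\gp{b_1,\dots,b_{h-1}}$, which has rank exactly $h-1$, and $\delta_{3k+1}=0$. Since $b_i\in N$ we have $\varphi(b_i)=\varphi(2b_i)$, so the high-coefficient terms of $c$ vanish in $A^{B/N}$; under the identification $B/N\simeq\MZ$ via the image of $b_h$, $\varphi^\ast(-c)$ is minus the indicator of $k$ pairwise disjoint blocks of $L$ consecutive points separated by single gaps (see Figure~\ref{fi:element-c}). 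As in Proposition~\ref{pr:qsp_tpart}, I choose $\delta_1,\dots,\delta_{3k}$ to be the multiples of $b_h$ that slide the $t_i$-blocks into these $k$ blocks according to the partition; this gives $\sum_{i=1}^{3k}c_{t_i}^{\delta_i}+(-c)=_{A^{B/N}}0$ with $\rank(N)\le h-1$, so $(\MZ,\MZ^h,\ovc,h-1)$ is a positive instance of $\QSP$.

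\textbf{Converse, Step 1 (the gadget pins down $N$).} Suppose $(\ovdelta,N)$ satisfies \eqref{eq:qsp_shift} for $(\MZ,\MZ^h,\ovc,h-1)$, with $\varphi\colon B\to B/N$ the canonical epimorphism, and rewrite the equation as $\sum_{i=1}^{3k}\varphi^\ast(c_{t_i}^{\delta_i})=\varphi^\ast(c^{\delta_{3k+1}})$. Each $c_{t_i}$ is $\{0,1\}$-valued with total mass $t_i$, so the left-hand side is a nonnegative function of total mass $\sum_i t_i=kL$, whence every value of $\varphi^\ast(c^{\delta_{3k+1}})$ lies in $[0,kL]$. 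On the other hand, at each $p\in B/N$ one has $\varphi^\ast(c^{\delta_{3k+1}})(p)=s(p)+\sum_{i=1}^{h-1}2^iM\,\epsilon_i(p)$, where $s(p)\in[0,kL]$ is the contribution of the $c_L$-part of $c$ and $\epsilon_i(p)\in\{-1,0,1\}$, with $\epsilon_i\equiv 0$ whenever $b_i\in N$. If $\epsilon_i(p)\neq 0$ for some $i$, the standard ``largest power of two dominates'' estimate gives $\bigl|\sum_i 2^iM\epsilon_i(p)\bigr|\ge 2M>kL$, so $\varphi^\ast(c^{\delta_{3k+1}})(p)\notin[0,kL]$, a contradiction. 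Hence every $\epsilon_i$ vanishes identically, which forces $\varphi(b_i)=0$, i.e.\ $b_i\in N$, for all $i=1,\dots,h-1$. Since $\gp{b_1,\dots,b_{h-1}}\simeq\MZ^{h-1}$ has rank $h-1\ge\rank(N)$ and $N/\gp{b_1,\dots,b_{h-1}}$ embeds into $\MZ^h/\gp{b_1,\dots,b_{h-1}}\simeq\MZ$, the latter quotient must be trivial, so $N=\gp{b_1,\dots,b_{h-1}}$ and $B/N\simeq\MZ$.

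\textbf{Converse, Step 2 (tiling).} With $N$ so determined, the equation reads in $A^{\MZ}$ (via $b_h$) as: a sum of $3k$ indicators of intervals of lengths $t_1,\dots,t_{3k}$ equals the indicator of $k$ pairwise disjoint length-$L$ blocks separated by single gaps. As the right-hand side is $\{0,1\}$-valued, the $t_i$-intervals must be pairwise disjoint and together tile the $k$ blocks, and since $L/4<t_i<L/2$, each length-$L$ block is covered by exactly three of them (one or two together span less than $L$; four together span more than $L$). The resulting partition of $\{t_1,\dots,t_{3k}\}$ into triples summing to $L$ witnesses that $T$ is a positive instance of $\TPART$. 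The main obstacle is Step~1: one must verify that the exponentially separated coefficients $2^iM$ cannot be neutralized by the bounded tiling contribution $s(p)\le kL$, and that $N$ cannot be a proper overgroup of $\gp{b_1,\dots,b_{h-1}}$ inside $\MZ^h$ — both of which rely on the specific choices $M=k(L+1)$ and $\rank(N)\le h-1$.
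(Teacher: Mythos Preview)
Your proof is correct and follows essentially the same approach as the paper: the forward direction is identical, and in the converse you argue, as the paper does, that the geometrically separated coefficients $2^{i}M$ force $b_1,\dots,b_{h-1}\in N$, after which the problem reduces to the one-dimensional tiling of Proposition~\ref{pr:qsp_tpart}. The only cosmetic difference is that the paper peels off the gadget terms inductively from the largest coefficient $2^{h-1}M$ downward, whereas you dispatch them all at once via the pointwise bound and binary-uniqueness estimate; your version is in fact slightly more careful, since you explicitly rule out proper rank-$(h-1)$ overgroups of $\langle b_1,\dots,b_{h-1}\rangle$ and keep track of the shift $\delta_{3k+1}$ on $-c$, both of which the paper leaves implicit.
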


\begin{proof}
``$\Rightarrow$''
If $T$ is a positive instance of $\TPART$, then, without loss of generality, 
we may assume that
$$
\sum_{i=1}^3 t_{3j+i}=L \ \ \text{for} \ \ j=0,1,\ldots,k-1.
$$
Let $L_i=(L+1)i$ for $i=0,\dots,k-1$.
Then clearly $N=\gp{b_1,\dots,b_{h-1}}$ and
\begin{align*}
\delta_1&=0 &\delta_2&=-t_1 &\delta_3&=-t_1-t_2\\
\delta_4&=-L_1 &\delta_5&=-L_1-t_4 &\delta_6&=-L_1-t_4-t_5\\
\delta_7&=-L_2 &\delta_8&=-L_2-t_7 &\delta_9&=-L_2-t_7-t_8\\
&\cdots &&\cdots &&\cdots 
\end{align*}
satisfy \eqref{eq:qsp_shift}.

``$\Leftarrow$''
Suppose that
$(\MZ,\MZ^h,\ovc,h-1)$ is a positive instance of $\QSP$, and
$N\le \MZ^h$ and shifts $\delta_1,\dots,\delta_{3k}$
satisfy \eqref{eq:qsp_shift}.
We claim that in the quotient space $\MZ^h/N$
each term $2^{i}M(a^{b_{i}} - a^{2\cdot b_{i}})$ in $c$ must vanish.
Indeed, every such term ``lights up'' two lamps 
\begin{itemize}
\item 
the lamp $2^{i}M a\in \MZ$ at $b_{i}\in \MZ^h$,
\item 
the lamp $-2^{i}M a\in \MZ$ at $2\cdot b_{i}\in \MZ^h$.
\end{itemize}
It is easy to see that $b_{h-1}$ and $2\cdot b_{h-1}$ must collapse in $\MZ^h/N$
(otherwise the lamp-values $2^{h-1} M$ and $-2^{h-1} M$ will not vanish
-- the rest of the lamp-values are too small to make $2^{h-1} M$ and $-2^{h-1} M$
disappear and satisfy \eqref{eq:qsp_shift}).
That implies that $b_{h-1}\in N$.
Then the statement follows by induction (from $h-1$ to $1$).
Thus, $N=\gp{b_1,\dots,b_{h-1}}$ and, modulo $N$, we have the situation 
discussed in the proof of Proposition \ref{pr:qsp_tpart}.
\end{proof}

\begin{theorem}\label{th:mid-h}
Fix $h\in \MN$.
$\QSP(\MZ,\MZ^{h},\cdot,h-1)$ is $\NP$-complete.
\end{theorem}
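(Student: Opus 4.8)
The plan is to establish the two halves of $\NP$-completeness separately: membership in $\NP$, and $\NP$-hardness. Membership is essentially free: the instance $(\MZ,\MZ^h,\ovf,h-1)$ is a particular instance of the fully general problem $\QSP(\cdot,\cdot,\cdot,\cdot)$, which was shown to lie in $\NP$ in Theorem \ref{th:qsp_np}. So the remaining content is hardness.

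For hardness, I would invoke Proposition \ref{pr:mid-h}, which says that an instance $T=\{t_1,\dots,t_{3k}\}$ of $\TPART$ is positive if and only if $(\MZ,\MZ^h,\ovc,h-1)$ is a positive instance of $\QSP$, where $\ovc=(c_{t_1},\dots,c_{t_{3k}},-c)$ is built from $T$ as in \eqref{eq:mid-h}. Since $\TPART$ is $\NP$-complete, it then only remains to check that the map $T\mapsto(\MZ,\MZ^h,\ovc,h-1)$ is computable in polynomial time. This is where one must be a little careful, and it is the main (though modest) obstacle.

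The size check goes as follows. Because $\TPART$ is \emph{strongly} $\NP$-complete, we may assume the integers $t_i$ are bounded by a polynomial in $3k$, so that $L=\tfrac1k\sum t_i$ and $M=k(L+1)$ are polynomial in the input size. Each $c_{t_i}$ has support of size $t_i$, hence polynomial size, and $c$ has support of size $kL+2(h-1)$ together with coefficients of absolute value at most $2^{h-1}M$ (recall $A=\MZ$, so these coefficients are written in unary). Here it is essential that $h$ is \emph{fixed}: then $2^{h-1}$ is a constant and every coefficient $2^iM$ appearing in $c$ is polynomially bounded. Thus $\size(\ovc)$, and hence the whole output $(\MZ,\MZ^h,\ovc,h-1)$, is polynomial in $\size(T)$, and all of it is clearly constructible in polynomial time. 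Combining this polynomial-time many-one reduction with membership in $\NP$ yields that $\QSP(\MZ,\MZ^{h},\cdot,h-1)$ is $\NP$-complete.
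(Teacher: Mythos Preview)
Your proposal is correct and follows exactly the same route as the paper's proof, which simply cites Proposition~\ref{pr:mid-h} for hardness and Theorem~\ref{th:qsp_np} for membership in $\NP$. Your explicit verification that the reduction $T\mapsto(\MZ,\MZ^h,\ovc,h-1)$ is polynomial-time (using strong $\NP$-completeness of $\TPART$ and the fact that $h$ is fixed so that the unary coefficients $2^iM$ stay polynomial) is a detail the paper leaves implicit, but it is exactly the right check to make.
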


\begin{proof}
Follows from Proposition \ref{pr:mid-h} and Theorem \ref{th:qsp_np}.
\end{proof}

Imposing a strict condition $|\ovf|=1$ on the number of functions in $\ovf$
allows to get a uniform result over the choice of $A$ and $B$.

\begin{theorem}\label{th:malcev}
Fix $h\in \MN$.
$\QSP(\cdot,\cdot,\cdot_{|\ovf|=1},h)$ is in $\P$.
\end{theorem}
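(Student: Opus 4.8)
The plan is to turn the decision problem into a polynomially bounded search over subgroups $N\le B$.

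\emph{Step 1 (the shift is irrelevant).} A direct computation shows that for the canonical epimorphism $\varphi\colon B\to B/N$ one has $\varphi^\ast(f_1^{\delta_1})(x+N)=\varphi^\ast(f_1)(\delta_1+x+N)$, i.e. $\varphi^\ast(f_1^{\delta_1})$ is just a translate of $\varphi^\ast(f_1)$; hence $f_1^{\delta_1}=_{A^{B/N}}0$ if and only if $f_1=_{A^{B/N}}0$. Thus $(A,B,(f_1),h)$ is positive iff there is $N\le B$ with $\rank(N)\le h$ such that, grouping $\supp(f_1)$ into $N$-cosets, the $f_1$-values inside each coset sum to $0$ in $A$; call such a partition of $\supp(f_1)$ \emph{balanced}. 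By Lemma \ref{le:good_subgroup} we may in addition assume $N=\gp{N\cap S}$, where $S=\Set{p-q}{p,q\in\supp(f_1)}$ is a set of polynomial size consisting of short elements of $B$; membership in, and the rank of, subgroups generated by such short elements are computable in polynomial time by the methods of Section \ref{se:rank}.

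\emph{Step 2 (torsion-free $B$).} Suppose first $B=\MZ^{s}$. If $N$ is a valid witness, set $V=\spn_\MQ(N)\subseteq\MQ^{s}$: then $\dim V=\rank(N)\le h$, and the partition of $\supp(f_1)$ into $V$-cosets ($p\sim q\iff p-q\in V$) is a coarsening of the $N$-coset partition, hence still balanced. Conversely, if $V\le\MQ^{s}$ has $\dim V\le h$ and its $V$-coset partition of $\supp(f_1)$ is balanced, then $N_V:=V\cap\MZ^{s}$ realises exactly that partition and $\rank(N_V)=\dim V\le h$, so the instance is positive. Finally one may take $V$ to be spanned by at most $h$ elements of $S$: replacing $V$ by $\spn_\MQ(V\cap S)\subseteq V$ does not change the induced partition on $\supp(f_1)$ (since $p-q\in S$ always, $p-q\in V\iff p-q\in V\cap S$), and this smaller space has dimension $\le h$, so a basis of it taken from $V\cap S$ has size $\le h$. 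Hence, for torsion-free $B$: enumerate all $T\subseteq S$ with $|T|\le h$ (there are $O(|S|^{h})$ of them, polynomial since $h$ is fixed), form $V=\spn_\MQ(T)$, compute the $V$-coset partition of $\supp(f_1)$ by rational linear algebra, and accept iff some $T$ produces a balanced partition.

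\emph{Step 3 (general $B$, and the main obstacle).} For $B\cong\MZ^{s}\times F$ with $F$ finite, one runs the analogous search inside a presentation $B=\MZ^{n}/\Lambda$ (obtained via Smith normal form, cf. Section \ref{se:rank}), lifting $\supp(f_1)$ and $S$ to short integer vectors and using, in place of $V$, subspaces of the form $\Lambda\otimes\MQ+\spn_\MQ(\widetilde T)$ with $\widetilde T\subseteq\widetilde S$ and $|\widetilde T|\le h$ to pin down the ``free part'' of the collapse. Since $\pi(N)\le\MZ^{s}$ is free, the extension $0\to N\cap F\to N\to\pi(N)\to 0$ splits, so $\rank(N)=\rank(\pi(N))+\rank(N\cap F)$; the rank budget $h$ therefore splits between a free part and a torsion part, each of rank $\le h$. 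The delicate point — and the step I expect to be the main obstacle — is exactly this torsion bookkeeping: unlike in Step 2, a rank-$\le h$ subgroup $N=\gp{N\cap S}$ need not be generated by $\le h$ elements of $S$, and one cannot simply saturate a small candidate inside $B$, because that can pull in all of $F$ and push the rank past $h$. The work is to show that the required torsion collapse (together with the ``shear'' between the free and torsion parts of $N$) is captured by only polynomially many choices — e.g. by a parallel ``at most $h$ torsion-differences from $S$'' argument inside $F$, combined with splitting $h$ between the two parts — so that a \emph{polynomial} and \emph{complete} family of candidate subgroups $N$ suffices. Granting this, each candidate $N$ is tested in polynomial time: compute $\rank(N)$ and check whether its coset partition of $\supp(f_1)$ is balanced, both via Section \ref{se:rank}. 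Accepting iff some candidate passes gives the claimed polynomial-time algorithm.
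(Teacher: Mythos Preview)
Your Steps 1 and 2 are essentially the paper's proof: the paper, too, takes $\delta_1=0$, restricts ``for simplicity'' to $B=\MZ^n$, introduces the difference set $S$, passes from $N$ to $N'=\gp{S\cap N}$ via Lemma~\ref{le:good_subgroup}, and then saturates $N'$ to $N''=\Span_{\MR}(N')\cap\MZ^n$, observing that $N''$ is captured by at most $h$ elements of $S$ and enumerating those $O(|S|^h)$ tuples. On the torsion-free case you and the paper agree line for line; your use of $\spn_\MQ$ in place of the paper's $\spn_\MR$ is immaterial.

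Where you diverge is Step 3. The paper does not treat general $B$ at all; it writes ``For simplicity we assume that $B=\MZ^n$'' and stops. You go further: you correctly flag torsion as the obstacle, give the right rank formula $\rank(N)=\rank(\pi(N))+\rank(N\cap F)$ (valid because $\pi(N)$ is free, so the extension splits), and sketch a free/torsion split of the search. But your Step 3 is explicitly incomplete (``Granting this \ldots''), and the worry you raise is genuine: take $B=\MZ_{30}$, $h=1$, $\supp(f_1)=\{0,6,10,15\}$; then $S$ consists of $\{0,4,5,6,9,10,15,20,21,24,25,26\}$, every nonzero element of $S$ shares a common factor with $30$, so no single $s\in S$ generates $\MZ_{30}$, yet the unique rank-$1$ subgroup collapsing all four points is $N=\MZ_{30}$. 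Thus the naive ``$\le h$ elements of $S$'' enumeration fails in the torsion part, and some additional argument bounding the torsion search polynomially is indeed required. In short: your proposal matches the paper on everything the paper actually proves, and it goes beyond the paper in honestly identifying (though not closing) a gap the paper elides.
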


\begin{proof}
Suppose that a given instance $I=(A,B,\ovf,h)$ of $\QSP$ has a solution, 
where $\ovf=(f_1)$.
Then $f_1^{\delta_1}=_{A^{B/N}} 0$ for some $N\le B$ satisfying 
$\rank(N)\le h$. 
For simplicity we assume that $B=\MZ^n$.
Clearly, we may assume that  $\delta_1=0$.
Below we prove that the description of a suitable
subgroup $N$ can be found in polynomial time.

As in the proof of Theorem \ref{th:qsp_np}, we can define the set 
$$
S=\Set{p_i-p_j}{p_i, p_j\in\supp(f_1)} \subseteq B,
$$
and the subgroup $N'=\gp{S\cap N} \le N$, which is generated by 
some elements of $S$
(in general, by more than $h$ elements of $S$)
and that works as $N$, i.e.,
$$
f_1=_{A^{B/{N'}}} 0
\ \ \mbox{ and }\ \ 
\rank(N') \le \rank(N) \le h.
$$
Fix any basis for $N'=\gp{\ovv_1,\dots,\ovv_h}$
(notice that in general $\ovv_i\notin S$) and define a new subgroup
\begin{equation}\label{eq:no-hole}
N''=N''(\ovv_1,\dots,\ovv_h)=
\Set{\alpha_1\ovv_1+\dots+\alpha_h\ovv_h \in B}{\alpha_i\in\MR}.
\end{equation}
Observe that $N''$ is the maximum subgroup of $B$
containing $N'$ and satisfying
$\rank(N'')=\rank(N')$, i.e., $N''$ works as $N'$.
Notice that \eqref{eq:no-hole} defines $N''$
as an intersection of the vector space
$\Span(\ovv_1,\dots,\ovv_h)$ and $B=\MZ^n$.
Furthermore, $\ovv_1,\dots,\ovv_h \in N'=\gp{S\cap N}$.
Hence, $N''$ can be captured by choosing up to $h$
elements $\ovs_1,\ovs_2,\dots$ from $S$ as 
\begin{equation}\label{eq:no-hole2}
N''=
\Set{\alpha_1 \ovs_1+\alpha_2\ovs_2+\dots \in B}{\alpha_i\in\MR},
\end{equation}
because for that purpose it is sufficient to capture the underlying vector space.
Therefore, to find $N''$ one can enumerate all tuples of at most $h$
elements $(\ovs_1,\ovs_2,\dots)$ from $S$, 
and for each tuple check if $f_1 =_{A^{B/{N''}}} 0$.
To check the latter condition, it is not even necessary
to find a generating set for $N''$, one can work modulo 
$\Span(\ovs_1,\ovs_2,\dots)$, which can be done in polynomial time.

Finally, the number of tuples of length $h$ is ${|S| \choose h}=O(|S|^h)$ 
and $|S|=O(|\supp(f_1)|^2)$.
Hence, the number of all tuples
of length at most $h$ is $O(|\supp(f_1)|^{2h})$.
Thus, the described procedure can be performed in polynomial time.
\end{proof}

\subsection{Bounded number of functions}

Here we consider a version of $\QSP$ in which 
the number of given functions in $\ovf=(f_1,\dots,f_m)$
is bounded by a fixed constant $M$.

\begin{theorem}\label{th:bounded-m}
$\QSP(\cdot,B,\cdot_{|\ovf|<M},\cdot)$ is in $\P$
for any finitely generated abelian group $B$ and $M\in \MN$.
\end{theorem}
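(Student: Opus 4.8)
The plan is to reduce, in polynomial time, an instance of $\QSP$ with up to $M$ functions to polynomially many instances with a single function, and then invoke Theorem~\ref{th:malcev}. First I would split on the quotient-rank bound $h$. If $h\ge\rank(B)$, the instance is decided directly by Theorem~\ref{th:big-h}, so it remains to handle $h<\rank(B)$; since $B$ is fixed, in this range $h$ is bounded by the constant $\rank(B)-1$, which is what makes Theorem~\ref{th:malcev} applicable with a running time bounded by a \emph{fixed} polynomial (its bound $O(|S|^h)$ becomes $O(|S|^{\rank(B)-1})$).

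The key step is the following equivalence. Given $I=(A,B,\ovf,h)$ with $\ovf=(f_1,\dots,f_m)$, $m<M$, put $R=\sum_{j=1}^m\size(f_j)$ and, for a tuple $\ovdelta=(\delta_1,\dots,\delta_m)\in B^m$, set $f_{\ovdelta}=\sum_{i=1}^m f_i^{\delta_i}\in A^B$. Then $I$ is a positive instance of $\QSP$ if and only if there is a tuple $\ovdelta$ with $|\delta_i|\le R$ for all $i$ for which $(A,B,(f_{\ovdelta}),h)$ is a positive instance of $\QSP$. For ``$\Leftarrow$'': a positive single-function instance yields $N\le B$ with $\rank(N)\le h$ and $f_{\ovdelta}=_{A^{B/N}}0$ — shifting a single function does not change which support points collapse modulo $N$, so the shift of that single function may be taken trivial — and then $(\ovdelta,N)$ satisfies \eqref{eq:qsp_shift} for $I$. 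For ``$\Rightarrow$'': take any solution $(\ovdelta,N)$ of \eqref{eq:qsp_shift} for $I$; by Lemma~\ref{le:bounded_delta_size} we may assume $|\delta_i|\le R$ for every $i$ while keeping the same $N$, and then $f_{\ovdelta}=_{A^{B/N}}0$ with $\rank(N)\le h$.

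The algorithm then enumerates all tuples $\ovdelta=(\delta_1,\dots,\delta_m)$ with $|\delta_i|\le R$; since $B$ is fixed and abelian its growth function is bounded by a fixed polynomial $p$, so there are at most $p(R)$ choices for each $\delta_i$ and at most $p(R)^m\le p(R)^M$ tuples, which is polynomial in $\size(I)$ because $R\le\size(I)$ and $M$ is a constant. For each tuple it computes $f_{\ovdelta}$ (a routine shift-and-add, polynomial time, with $\size(f_{\ovdelta})=O(\size(I))$) and tests whether $(A,B,(f_{\ovdelta}),h)$ is a positive instance of $\QSP$ via the polynomial-time algorithm of Theorem~\ref{th:malcev}; it accepts iff some tuple passes. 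Correctness is the equivalence above, and the total running time is polynomial.

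The step requiring the most care is the interface with Theorem~\ref{th:malcev}, which is polynomial only for a fixed value of $h$: one must first peel off the range $h\ge\rank(B)$ using Theorem~\ref{th:big-h} and then exploit that $B$, hence $\rank(B)$, is fixed in order to bound $h$ in the remaining range. By contrast, the enumeration of shift tuples is harmless precisely because $B$ is fixed (so it has polynomial growth) and $M$ is a constant.
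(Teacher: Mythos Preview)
Your proof is correct and shares the paper's overall structure: split off the case $h\ge\rank(B)$ via Theorem~\ref{th:big-h}, then enumerate all bounded shift tuples $\ovdelta$ using Lemma~\ref{le:bounded_delta_size} and the polynomial growth of the fixed group $B$. The genuine difference lies in how the subgroup $N$ is found once $\ovdelta$ is fixed. The paper works directly: it passes from $N$ to $N'=\gp{S\cap N}$ via Lemma~\ref{le:good_subgroup} and then invokes the LLL-based Corollary~\ref{cor:bounded_gen_set} to obtain a generating set for $N'$ of length at most $\rank(B)$ with entries bounded by $2^{\rank(B)/2}\size(I)$, finally enumerating all such tuples. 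You instead package $c=\sum f_i^{\delta_i}$ into a single-function $\QSP$ instance and delegate to Theorem~\ref{th:malcev}, whose proof enumerates tuples of at most $h$ elements drawn from the support-difference set $S$. Your route is more modular and entirely avoids the LLL machinery (Section~2 lattice material), at the cost of reaching into the \emph{proof} of Theorem~\ref{th:malcev} to extract the uniform bound $O(|S|^{\rank(B)-1})$ over all $h<\rank(B)$; the paper's route is self-contained at this point but needs the lattice preliminaries. Both yield the same polynomial-time conclusion.
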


\begin{proof}
By Theorem \ref{th:big-h}, $\QSP(\cdot,B,\cdot,\cdot_{h\ge \rank(B)}) \in\P$.
Hence, we may assume that 
$$
h < \rank(B).
$$
Consider an instance $I=(A,B,\ovf,h)$ of $\QSP$, where
$\ovf=(f_1,\dots,f_m)$ satisfies $m\le M$.
By definition, $I$ is a positive instance if and only if there exists
$(\ovdelta,N)$ for $I$ satisfying \eqref{eq:qsp_shift}.
Below we describe an enumeration procedure
that finds a pairs $(\ovdelta,N)$ satisfying \eqref{eq:qsp_shift}
for every positive instances $I$ 
in time polynomial in terms of $\size(I)$.

By Lemma \ref{le:bounded_delta_size}, we may assume that 
$|\delta_{i}|\le \sum_{i=1}^m \size(f_{i}) \le \size(I)$. 
Since $B$ is an abelian group, its growth function is bounded by a
polynomial function $p(n)$.
Hence, the number of tuples $(\delta_1,\dots,\delta_m)$
satisfying $|\delta_{i}|\le \size(I)$
is bounded by $(p(\size(I)))^M$, which is a polynomial
in terms of $\size(I)$.
Therefore, we can enumerate all bounded tuples $\ovdelta=(\delta_1,\dots,\delta_m)$
one by one in polynomial time.

For a bounded sequence $\ovdelta$ define
\begin{equation}\label{eq:c-condition}
c=\sum_{i=1}^m f_{c_i}^{\delta_{i}}\in A^B.
\end{equation}
Assume that for $\ovdelta$ there exists a subgroup $N\le B$ such that
$(\ovdelta,N)$ satisfies \eqref{eq:qsp_shift}, i.e.,
$$
c=_{A^{B/N}}0 \mbox{ and } \rank(N)\le h.
$$
We claim that we can find such subgroup (perhaps not $N$ itself)
as follows.
Define a set $S\subseteq B$ as
$$
S = S_c = \Set{p_i-p_j}{p_i,p_j\in\supp(c)}.
$$
Let $S'=N\cap S$ and let $N'=\gp{S'}\le N$. 
By Lemma \ref{le:good_subgroup}, $c=_{A^{B/N'}}0$. 
Since $N'\le N$, we have $\rank(N')\le \rank(N)\le h$.
By construction, every $s\in S'$ satisfies $\|s\|\le\size(I)$. 
Then by Corollary \ref{cor:bounded_gen_set}, 
$N'$ has a basis $\{t_1,\ldots,t_y\}$ satisfying $y\le h<\rank(B)$ and
$$
\|t_i\|\le \sqrt{2^{y-1}}\size(I)\le 2^{\rank(B)/2}\size(I).
$$
Here $2^{\rank(B)/2}$ is a constant -- it is a characteristic of the fixed group $B$.
Therefore, the size of the set
$$
T=T_c=
\Set{(t_1,\ldots,t_y)}{\|t_i\|\le 2^{\rank(B)/2}\size(I)}
$$ 
is polynomial in terms of $\size(I)$
and we can enumerate its elements one-by-one in polynomial time.
One tuple $(t_1,\ldots,t_y)\in T$ is a generating set for $N'$.
We can identify it by checking \eqref{eq:c-condition},
which can be done in polynomial time.
\end{proof}


\section{Orientable equations over $A\wr B$}
\label{se:orientable}

In this section we use the computational properties of
$\QSP$ established in Section \ref{se:QSP-properties}
to draw conclusions about the 
Diophantine problem for orientable quadratic equations \eqref{eq:orientable}.
We consider different variations of the Diophantine problem:
the uniform problem (in which $A$ and $B$ are a part of the input)
and the non-uniform problem (in which $A$ and $B$ are fixed)
together with additional conditions that can be imposed
on a given quadratic equation.
We start with the global complexity upper bound
for the uniform Diophantine problem for orientable quadratic equations.

\begin{corollary}
\label{co:equations-NP}
The problem of deciding if a given orientable quadratic equation in a given wreath product $A\wr B$ has a solution belongs to $\NP$.
\end{corollary}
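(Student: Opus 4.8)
The plan is to reduce the Diophantine problem for orientable quadratic equations over $A \wr B$ to an instance of $\QSP$, and then invoke Theorem \ref{th:qsp_np} which establishes that $\QSP(\cdot,\cdot,\cdot,\cdot)$ is in $\NP$. Most of the machinery is already in place: Proposition \ref{pr:DP-QSP-reduction} gives the reduction directly, modulo one preliminary normalization.

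First I would handle the side condition $\sum_{i=1}^m \delta_{c_i} = 0$ appearing in Proposition \ref{pr:DP-QSP-reduction}. Given an arbitrary orientable equation $\CE$ of the form \eqref{eq:orientable} over $A \wr B$ with constants $c_i = (\delta_{c_i}, f_{c_i})$, observe that a necessary condition for $\CE$ to have any solution is that the $B$-components balance: projecting the equation onto $B$ (the quotient $A \wr B \twoheadrightarrow B$, which kills all commutators and all conjugates' contributions to $B$ except the $\delta_{c_i}$'s) yields $\sum_{i=1}^m \delta_{c_i} = 0$. This check is computable in polynomial time in $|W|$. If it fails, output ``no''. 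Otherwise, proceed.

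Next I would apply Proposition \ref{pr:DP-QSP-reduction}: $\CE$ has a solution if and only if $(A, B/\gp{\ovdelta_c}, \ovf_c, 2g)$ is a positive instance of $\QSP$. Here $\ovdelta_c = (\delta_{c_1},\ldots,\delta_{c_m})$ and $\ovf_c = (f_{c_1},\ldots,f_{c_m})$ are read directly off the constants of $\CE$, and $2g$ is twice the genus; all of these are extractable from the input equation in polynomial time, and the size of the resulting $\QSP$ instance is polynomially bounded in the size of $\CE$ (the group $B/\gp{\ovdelta_c}$ is presented by adjoining the relations $\ovdelta_c$ to the given presentation of $B$; the functions $f_{c_i}$ are sub-data of $W$; and $h = 2g \le |W|$ in unary). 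Thus deciding $\CE$ is polynomial-time many-one reducible to $\QSP(\cdot,\cdot,\cdot,\cdot)$, which lies in $\NP$ by Theorem \ref{th:qsp_np}; since $\NP$ is closed under polynomial-time reductions, the uniform Diophantine problem for orientable quadratic equations over $A \wr B$ is in $\NP$.

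I do not anticipate a serious obstacle here, as the corollary is essentially a packaging of earlier results; the only point requiring a moment's care is verifying that the transformation of an arbitrary orientable equation into the standard form \eqref{eq:orientable} with its constants made explicit, together with the passage to $B/\gp{\ovdelta_c}$, does not blow up the input size — but the standard-form reduction is stated in the introduction to be computable in time $O(|W|^2)$, and adjoining finitely many short relations to a presentation of $B$ only adds polynomially to its size, so size bounds are fine. Hence the certificate guaranteed by Theorem \ref{th:qsp_np} for the associated $\QSP$ instance serves (after the polynomial-time translation) as a certificate for $\CE$.
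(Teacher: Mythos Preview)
Your proposal is correct and follows the same approach as the paper: reduce to $\QSP$ via Proposition~\ref{pr:DP-QSP-reduction} and invoke Theorem~\ref{th:qsp_np}. You supply more detail than the paper's one-line proof---in particular, you explicitly handle the preliminary check $\sum_{i=1}^m \delta_{c_i}=0$ and verify that the reduction is polynomial-size---but the substance is identical.
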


\begin{proof}
Proposition \ref{pr:DP-QSP-reduction} translates a given equation into an instance of $\QSP(\cdot,\cdot,\cdot,\cdot)$ which, by Theorem \ref{th:qsp_np}, is in $\NP$.
\end{proof}

\subsection{Non-uniform problem}

Fix finitely generated abelian groups $A$ and $B$. Here we discuss orientable equations over the fixed wreath product $A\wr B$.

\begin{corollary}\label{co:DP-infinite-B}
If $|B|=\infty$, then the Diophantine problem
for \textbf{spherical} equations in $A\wr B$ is $\NP$-hard.
\end{corollary}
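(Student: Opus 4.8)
The plan is to reduce the Diophantine problem for spherical equations \eqref{eq:spherical} to $\QSP(A,B,\cdot,0)$, which is $\NP$-hard by Theorem \ref{th:QSP-infinite-B} since $A$ is nontrivial (a standing assumption) and $B$ is infinite. A spherical equation is precisely an orientable equation \eqref{eq:orientable} with genus $g=0$, so in principle one would like to invoke Proposition \ref{pr:DP-QSP-reduction} with $g=0$; the only subtlety is that that proposition is stated for $g\ge 1$ and under the hypothesis $\sum_{i=1}^m \delta_{c_i}=0$. So the first step is to check that the argument of Proposition \ref{pr:DP-QSP-reduction} goes through verbatim with $g=0$: Lemma \ref{le:orientable-to-A_B-simpl} with the $x_i,y_i$ terms absent shows that $\prod_{j=1}^m z_j^{-1}c_jz_j=1$ in $A\wr B$ is equivalent to $\sum_{i=1}^m\delta_{c_i}=0$ together with $\sum_{i=1}^m f_{c_i}^{\delta_{z_i}'}=0$ in $A^B$, and then Proposition \ref{pr:transform-components} (now with no unknowns $f_{x_i},f_{y_i}$) yields that this holds iff $\sum_{i=1}^m f_{c_i}^{\delta_{z_i}'}=_{A^{B/\gp{\ovdelta_c}}}0$, i.e.\ iff $(A,B/\gp{\ovdelta_c},\ovf_c,0)$ is a positive instance of $\QSP$. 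The condition $\sum\delta_{c_i}=0$ is a necessary condition that can be checked in constant time, and if it fails the equation has no solution.

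Having reduced the Diophantine problem for spherical equations to $\QSP(A,B',\cdot,0)$ where $B'=B/\gp{\ovdelta_c}$ ranges over quotients of $B$, I would then go the other direction: given an arbitrary instance $(A,B,\ovf,0)$ of $\QSP$ (with the fixed infinite $B$), I need to realize it as (the $\QSP$-translate of) a spherical equation over $A\wr B$. This is immediate: write $\ovf=(f_1,\dots,f_m)$ and set $c_i=(0,f_i)\in A\wr B$, so that $\delta_{c_i}=0$ for all $i$ (hence $\sum\delta_{c_i}=0$ trivially and $\gp{\ovdelta_c}$ is trivial, so $B'=B$). The resulting spherical equation $\prod_{j=1}^m z_j^{-1}c_jz_j=1$ over $A\wr B$ then, by the $g=0$ case of Proposition \ref{pr:DP-QSP-reduction}, has a solution iff $(A,B,\ovf,0)$ is a positive instance of $\QSP$. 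All of this is plainly polynomial-time computable in both directions.

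Finally, I would apply Theorem \ref{th:QSP-infinite-B}: $\QSP(A,B,\cdot,0)$ is $\NP$-hard, so the Diophantine problem for spherical equations over $A\wr B$ is $\NP$-hard as well. The main thing to get right is simply checking that nothing in the chain Lemma \ref{le:orientable-to-A_B-simpl} $\to$ Proposition \ref{pr:transform-components} $\to$ Proposition \ref{pr:DP-QSP-reduction} actually used $g\ge 1$; the $g=0$ specialization only makes the formulas shorter, with the $\sum_{i=1}^g[\cdot,\cdot]$ blocks and the unknowns $f_{x_i},f_{y_i},\delta_{x_i},\delta_{y_i}$ all vanishing and $N=\gp{\ovdelta_x,\ovdelta_y}$ becoming trivial. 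There is no real obstacle; the content is entirely inherited from Theorem \ref{th:QSP-infinite-B} (hence from Proposition \ref{pr:qsp_tpart} and the strong $\NP$-completeness of $\TPART$).
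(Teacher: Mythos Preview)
Your proposal is correct and follows essentially the same approach as the paper: reduce spherical equations to $\QSP(A,B,\cdot,0)$ via (the $g=0$ case of) Proposition~\ref{pr:DP-QSP-reduction}, then invoke Theorem~\ref{th:QSP-infinite-B}. You are simply more explicit than the paper about why the $g=0$ case goes through and about the direction of the reduction (constructing $c_i=(0,f_i)$ so that $\gp{\ovdelta_c}$ is trivial), which the paper leaves implicit in its two-sentence proof.
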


\begin{proof}
Proposition \ref{pr:DP-QSP-reduction} establishes a one-to-one correspondence
between spherical equations \eqref{eq:spherical} in $A\wr B$
and instances of $\QSP(A,B,\cdot,0)$, which,
by Theorem \ref{th:QSP-infinite-B}, is $\NP$-hard when $|B|=\infty$.
\end{proof}

\begin{corollary}\label{co:all-orientable}
The Diophantine problem for orientable quadratic equations over 
a wreath product of finitely generated abelian groups $A\wr B$ is
\begin{itemize}
\item
polynomial-time decidable $\ \ \Leftrightarrow\ \ $ $B$ is finite;
\item
$\NP$-complete $\ \ \Leftrightarrow\ \ $ $B$ is infinite.
\end{itemize}
\end{corollary}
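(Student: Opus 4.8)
The plan is to combine the reduction of Proposition~\ref{pr:DP-QSP-reduction} with the complexity classification of $\QSP$ obtained in Section~\ref{se:QSP-properties}. Membership in $\NP$ for every choice of $A$ and $B$ is already supplied by Corollary~\ref{co:equations-NP}, so it remains only to prove the dichotomy between polynomial-time decidability and $\NP$-hardness; the two biconditionals will then follow under the standard assumption $\P\ne\NP$.

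\emph{The case $B$ finite.} Given an orientable quadratic equation $W=1$ over $A\wr B$, first replace it by an equivalent standard form \eqref{eq:spherical} or \eqref{eq:orientable}, which is computable in time $O(|W|^2)$, and write $c_i=(\delta_{c_i},f_{c_i})$ for its constants. Check in polynomial time whether $\sum_{i=1}^m\delta_{c_i}=0$: if this fails, Lemma~\ref{le:orientable-to-A_B-simpl} shows the equation has no solution and we stop. Otherwise Proposition~\ref{pr:DP-QSP-reduction} produces, in polynomial time, an equivalent instance $(A,B/\gp{\ovdelta_c},\ovf_c,2g)$ of $\QSP$. Because $B$ is fixed and finite, $B/\gp{\ovdelta_c}$ is a finite group of order at most $|B|$, and $B$ has only finitely many quotient groups, so the instance belongs to one of finitely many families $\QSP(A,Q,\cdot,\cdot)$, each of which is in $\P$ by Theorem~\ref{th:QSP-finite-B} (alternatively, invoke the uniform strengthening stated in the corollary following Theorem~\ref{th:QSP-finite-B}, using that $\rank(A)$ is a fixed constant and $|B/\gp{\ovdelta_c}|\le|B|$). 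Note that the quotient-rank bound $h=2g$ is immaterial here, since $\QSP$ over a finite quotient is polynomial-time decidable for every value of $h$; this is exactly why a single argument handles all genera $g$ simultaneously.

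\emph{The case $B$ infinite.} The spherical equations \eqref{eq:spherical} form the genus-$0$ subclass of the orientable quadratic equations, and by Corollary~\ref{co:DP-infinite-B} the Diophantine problem for spherical equations over $A\wr B$ is $\NP$-hard whenever $|B|=\infty$; together with Corollary~\ref{co:equations-NP} this yields $\NP$-completeness. Now both biconditionals follow: the ``$\Leftarrow$'' implications are the two cases just treated, and the ``$\Rightarrow$'' implications hold modulo $\P\ne\NP$, because a polynomial-time algorithm in the infinite-$B$ case, or $\NP$-completeness in the finite-$B$ case, would each force $\P=\NP$.

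The proof is mostly an assembly of earlier results, so no step is a real obstacle; the only point requiring a little care is the finite-$B$ upper bound, where one must observe that the quotient $B/\gp{\ovdelta_c}$ appearing in Proposition~\ref{pr:DP-QSP-reduction}, though it varies with the input, is always a finite group of bounded order, so that Theorem~\ref{th:QSP-finite-B} continues to apply, and that the unbounded parameter $h=2g$ poses no problem over a finite quotient.
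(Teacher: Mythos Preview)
Your proof is correct and follows essentially the same route as the paper: membership in $\NP$ via Corollary~\ref{co:equations-NP}, $\NP$-hardness for infinite $B$ via Corollary~\ref{co:DP-infinite-B}, and polynomial-time decidability for finite $B$ via the reduction of Proposition~\ref{pr:DP-QSP-reduction} combined with Theorem~\ref{th:QSP-finite-B}. You are in fact more careful than the paper on two points: you note explicitly that the reduction lands in $\QSP$ over the quotient $B/\gp{\ovdelta_c}$ (not $B$ itself) and argue why this is harmless when $B$ is finite, and you spell out that the two biconditionals require the standing assumption $\P\ne\NP$.
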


\begin{proof}
By Corollary \ref{co:equations-NP}, the problem is in $\NP$.
By Corollary \ref{co:DP-infinite-B}, the problem is $\NP$-hard when $|B|=\infty$.
Finally, Proposition \ref{pr:DP-QSP-reduction} reduces
orientable quadratic equations \eqref{eq:orientable} in $A\wr B$
to instances of $\QSP(A,B,\cdot,\cdot)$, which,
by Theorem \ref{th:QSP-finite-B}, are polynomial-time decidable if $|B|<\infty$.
\end{proof}

Corollary \ref{co:all-orientable} gives a very limited picture
for orientable equations.
For $\NP$-hardness it employs spherical equations only
and neglects other types of orientable equations.
Below we analyze other cases with the goal
to identify the reasons for computational hardness.

\subsubsection{Non-uniform problem: genus $g$}

The value of the genus $g$ of a given equation $\CE$
relative to the rank of the group $B$ directly affects the hardness of solving $\CE$.

\begin{corollary}\label{co:g-vs-rankB}
The Diophantine problem for the class of orientable quadratic 
equations \eqref{eq:orientable}
with genus satisfying $g\ge \rank(B)/2$
is in $\P$.
\end{corollary}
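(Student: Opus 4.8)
The plan is to reduce the Diophantine problem for orientable equations of the form \eqref{eq:orientable} with $g \ge \rank(B)/2$ to the decidable fragment $\QSP(\cdot, B', \cdot, \cdot_{h \ge \rank(B')})$, which lies in $\P$ by Theorem \ref{th:big-h}. First I would dispose of the condition $\sum_{i=1}^m \delta_{c_i} = 0$: given an equation $\CE$ of type \eqref{eq:orientable}, compute $\pi_B(c_i) = \delta_{c_i}$ for each constant $c_i$; if $\sum_{i=1}^m \delta_{c_i} \ne 0$, then $\CE$ has no solution (apply $\pi_B$ to both sides of \eqref{eq:orientable}, noting commutators and conjugates lie in $\ker \pi_B$), so output ``no.'' Otherwise, apply Proposition \ref{pr:DP-QSP-reduction} to translate $\CE$ into the instance $(A, B/\gp{\ovdelta_c}, \ovf_c, 2g)$ of $\QSP$; all of this is polynomial-time computable.

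Next I would verify the key inequality. Write $B' = B/\gp{\ovdelta_c}$. Since $B'$ is a quotient of $B$, the minimum number of generators cannot increase, so $\rank(B') \le \rank(B)$. Combined with the hypothesis $g \ge \rank(B)/2$, this gives
\[
2g \ge \rank(B) \ge \rank(B'),
\]
so the quotient-rank bound $h = 2g$ in the produced $\QSP$ instance satisfies $h \ge \rank(B')$. By Theorem \ref{th:big-h}, $\QSP(\cdot, B', \cdot, \cdot_{h \ge \rank(B')}) \in \P$, and in fact the proof of that theorem shows the instance is positive if and only if $\sum_i \sum_{x \in \supp(f_{c_i})} f_{c_i}(x) = 0$, which can be checked in polynomial time. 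So the whole decision procedure is: check $\sum \delta_{c_i} = 0$; if so, check the $A$-sum vanishes; answer accordingly.

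I expect the only real subtlety to be bookkeeping rather than a genuine obstacle. Two points warrant care: first, that $B$ here should be understood as the group actually appearing in the problem (fixed), so $\rank(B)$ is a constant and the polynomial bounds in Theorems \ref{th:big-h} and \ref{th:QSP-finite-B} are uniform; second, that Proposition \ref{pr:DP-QSP-reduction} requires the hypothesis $\sum_{i=1}^m \delta_{c_i} = 0$, which is exactly why the preliminary check is needed and why the $g = 0$ (spherical, $m \ge 1$) borderline case is not an issue — when $g \ge 1$ the reduction applies after that check, and when $g = 0$ we may invoke Corollary \ref{co:DP-infinite-B}/Theorem \ref{th:QSP-finite-B} directly (though the statement's hypothesis $g \ge \rank(B)/2$ with $g=0$ forces $\rank(B) = 0$, i.e.\ $B$ trivial, a degenerate case). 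Assembling these observations yields the corollary.
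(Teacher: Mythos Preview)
Your proposal is correct and follows the same route as the paper: reduce via Proposition~\ref{pr:DP-QSP-reduction} and then invoke Theorem~\ref{th:big-h}. You are in fact more careful than the paper's one-line proof, which writes ``$\QSP(A,B,\cdot,2g)$'' without explicitly noting that the reduction lands in $B'=B/\gp{\ovdelta_c}$ and that one must first test $\sum\delta_{c_i}=0$; your observation $\rank(B')\le\rank(B)\le 2g$ is exactly the missing link. The $g=0$ digression is unnecessary since equations of type~\eqref{eq:orientable} have $g\ge 1$ by definition, but it does no harm.
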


\begin{proof}
Proposition \ref{pr:DP-QSP-reduction} reduces
orientable quadratic equations \eqref{eq:orientable} in $A\wr B$
satisfying $g\ge \rank(B)/2$ to instances of $\QSP(A,B,\cdot,2g)$, which, by
Theorem \ref{th:big-h}, are decidable in polynomial time.
\end{proof}

The bound $g\ge \rank(B)/2$ cannot be improved, 
i.e., there is a group $B$ satisfying $2g<\rank(B)$ for which 
the problem is $\NP$-hard. 

\begin{corollary}\label{co:mid-h}
Fix $g,n\in \MN$ such that $2g<n$.
The Diophantine problem for the class of orientable quadratic 
equations \eqref{eq:orientable}
of genus $g$ over the group $\MZ\wr \MZ^{n}$
is $\NP$-complete.
\end{corollary}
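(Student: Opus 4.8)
The plan is to combine the upper bound from Corollary \ref{co:equations-NP} with a reduction from $\TPART$ using the machinery already built for $\QSP$. First I would note that membership in $\NP$ is immediate: by Corollary \ref{co:equations-NP}, the Diophantine problem for orientable quadratic equations over any wreath product $A\wr B$ of finitely generated abelian groups is in $\NP$, and restricting to the fixed group $\MZ\wr\MZ^n$ and to equations of the fixed genus $g$ only makes the problem easier, so it remains in $\NP$. The substance is therefore the $\NP$-hardness direction.

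For hardness, the key observation is that Proposition \ref{pr:DP-QSP-reduction} reduces solvability of an orientable equation \eqref{eq:orientable} of genus $g$ over $A\wr B$ with constants $c_1,\dots,c_m$ satisfying $\sum_{i=1}^m\delta_{c_i}=0$ to the positivity of the $\QSP$-instance $(A,\,B/\gp{\ovdelta_c},\,\ovf_c,\,2g)$. So the task is to realize, for a given instance $T=\{t_1,\dots,t_{3k}\}$ of $\TPART$, a $\QSP$-instance of the form $(\MZ,\MZ^{h},\ovc,h-1)$ — which by Proposition \ref{pr:mid-h} encodes $T$ — as the $\QSP$-instance coming from an orientable equation of genus exactly $g$ over $\MZ\wr\MZ^n$. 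Setting $h=2g+1$, the quotient-rank bound $2g=h-1$ matches the genus parameter in Proposition \ref{pr:DP-QSP-reduction}. The discrepancy between $\MZ^{h}=\MZ^{2g+1}$ (the group appearing in Proposition \ref{pr:mid-h}) and the fixed group $\MZ^n$ with $n>2g$ is handled by taking $\ovdelta_c$ to generate a complementary free direct summand: concretely, write $\MZ^n=\MZ^{2g+1}\times\MZ^{n-2g-1}$, let the $\QSP$-data $\ovc$ from \eqref{eq:mid-h} live in the $A^{\MZ^{2g+1}}$-part, and append to the tuple of constants $n-2g-1$ extra trivial-coefficient conjugates $z_j^{-1}c_jz_j$ whose $\delta_{c_j}$ run through the standard basis of the $\MZ^{n-2g-1}$ factor (with $f_{c_j}=0$), together with one balancing conjugate so that $\sum\delta_{c_i}=0$. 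Then $B/\gp{\ovdelta_c}\simeq\MZ^{2g+1}$, the extra conjugates contribute nothing to \eqref{eq:orientable-to-A_B-simpl}, and the resulting $\QSP$-instance is exactly $(\MZ,\MZ^{2g+1},\ovc,2g)=(\MZ,\MZ^{h},\ovc,h-1)$, which by Proposition \ref{pr:mid-h} is positive iff $T$ is a positive instance of $\TPART$. Since the construction of the equation from $T$ is clearly polynomial-time computable (all integers appear in unary and $\TPART$ is strongly $\NP$-complete), this gives a polynomial-time many-one reduction $\TPART\le_P$ the problem in question, establishing $\NP$-hardness.

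The main obstacle I anticipate is bookkeeping around the reduction to standard form: the equation I build directly — a product of $g$ commutators times a product of conjugates — is already in the orientable standard form \eqref{eq:orientable}, but I must double-check that the genus is genuinely $g$ (the commutator part must not be absorbable or forced to collapse, which is ensured because the $\QSP$-instance from \eqref{eq:mid-h} genuinely needs the quotient-rank budget $2g=h-1$, as shown in the proof of Proposition \ref{pr:mid-h}) and that the side condition $\sum_{i=1}^m\delta_{c_i}=0$ required by Proposition \ref{pr:DP-QSP-reduction} is arranged. The latter is a routine matter of adding one more conjugate with the appropriate $\delta$-value and zero coefficient. A secondary point to verify is that padding $B/\gp{\ovdelta_c}$ down from $\MZ^n$ to $\MZ^{2g+1}$ via the extra basis-vector conjugates does not interfere with the lamp-configuration argument in Proposition \ref{pr:mid-h}; since those extra conjugates have trivial coefficient functions, they are invisible in \eqref{eq:orientable-to-A_B-simpl} after the reduction, so this is immediate.
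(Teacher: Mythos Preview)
Your proposal is correct and follows essentially the same route as the paper: $\NP$ membership from Corollary~\ref{co:equations-NP}, and hardness by lifting the $\TPART\to\QSP$ reduction of Proposition~\ref{pr:mid-h}/Theorem~\ref{th:mid-h} to equations of genus $g$ via Proposition~\ref{pr:DP-QSP-reduction}. The only difference is cosmetic: the paper takes all $\delta_{c_i}=0$ and invokes Theorem~\ref{th:mid-h} directly for $\QSP(\MZ,\MZ^n,\cdot,2g)$, whereas you pad the equation with trivial-coefficient conjugates whose $\delta$-components generate the extra $\MZ^{n-2g-1}$ factor so that $B/\gp{\ovdelta_c}\cong\MZ^{2g+1}$ matches Proposition~\ref{pr:mid-h} exactly---your version is a bit more explicit about the case $n>2g+1$, but both arguments are valid and equivalent in spirit.
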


\begin{proof}
Proposition \ref{pr:DP-QSP-reduction} reduces
instances of $\QSP(\MZ,\MZ^n,\cdot,2g)$, satisfying the condition $2g < n$,
to orientable quadratic equations \eqref{eq:orientable} in $\MZ\wr \MZ^n$
of genus $g$. By Theorem \ref{th:mid-h}, the original problem
$\QSP(\MZ,\MZ^n,\cdot,2g)$ is $\NP$-complete.
\end{proof}

\subsubsection{Non-uniform problem: number of conjugates $m$}

The number of conjugates $m$ in a given quadratic equation $\CE$ of type \eqref{eq:orientable} is also responsible for the hardness of solving $\CE$. Specifically, equations \eqref{eq:orientable} with
$m$ bounded by a constant $M$ can be solved efficiently.

\begin{corollary}\label{co:bounded-m}
Fix $M\in \MN$.
The Diophantine problem for the class of orientable quadratic 
equations \eqref{eq:orientable}
with the number of conjugates $m$ bounded by $M$
over $A\wr B$ is in $\P$.
\end{corollary}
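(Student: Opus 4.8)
The plan is to combine the reduction of Proposition~\ref{pr:DP-QSP-reduction} with Theorem~\ref{th:bounded-m}. Let $\CE$ be an orientable equation of the form~\eqref{eq:orientable} over the fixed group $A\wr B$, with constants $c_i=(\delta_{c_i},f_{c_i})$ and with the number of conjugates $m\le M$. First I would check, in polynomial time, whether $\sum_{i=1}^m\delta_{c_i}=0$. If this fails, then Lemma~\ref{le:orientable-to-A_B-simpl} shows that $\CE$ has no solution and we are done. Otherwise Proposition~\ref{pr:DP-QSP-reduction} applies and tells us that $\CE$ has a solution if and only if $(A,B/\gp{\ovdelta_c},\ovf_c,2g)$ is a positive instance of $\QSP$; moreover this instance is computable from $\CE$ in polynomial time, and the sequence $\ovf_c=(f_{c_1},\ldots,f_{c_m})$ has at most $M$ components.

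The second step is to decide this $\QSP$ instance using the algorithm behind Theorem~\ref{th:bounded-m}. The one point requiring care is that Theorem~\ref{th:bounded-m} is phrased for a fixed group $B$, whereas here the group $B/\gp{\ovdelta_c}$ varies with $\CE$. However, $B/\gp{\ovdelta_c}$ is always a quotient of the fixed group $B$, so its rank is at most $\rank(B)$ and, with respect to the images of a fixed generating set of $B$, its growth function is dominated by that of $B$. These are exactly the two ingredients used in the proof of Theorem~\ref{th:bounded-m}: the rank bound controls (via Theorem~\ref{th:big-h} and Corollary~\ref{cor:bounded_gen_set}) the constant $2^{\rank/2}$ and the number of candidate bases $(t_1,\ldots,t_y)$ for the quotient-subgroup, while the growth bound controls the number of candidate shift-tuples $\ovdelta$. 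Both bounds therefore hold uniformly across the family $\{B/\gp{\ovdelta_c}\}$, so the relevant rank-bounded version $\QSP(\cdot,\cdot_{\rank\le\rank(B)},\cdot_{|\ovf|\le M},\cdot)$ is in $\P$, which is all we need.

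Putting the two steps together yields a polynomial-time decision procedure for~\eqref{eq:orientable} with $m\le M$ over $A\wr B$. I expect the main obstacle to be precisely the verification in the previous paragraph, namely checking that the polynomial bounds produced in the proof of Theorem~\ref{th:bounded-m} depend only on $B$ (and $M$) and survive the passage to an arbitrary quotient $B/\gp{\ovdelta_c}$; everything else is immediate from results already established.
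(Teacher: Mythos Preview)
Your proof is correct and follows the same two-step strategy as the paper: reduce via Proposition~\ref{pr:DP-QSP-reduction} and then invoke Theorem~\ref{th:bounded-m}. In fact you are more careful than the paper on one point: the paper's proof simply says the reduction lands in $\QSP(\cdot,B,\cdot_{|\ovf|<M},\cdot)$, whereas Proposition~\ref{pr:DP-QSP-reduction} actually produces instances over the varying quotient $B/\gp{\ovdelta_c}$, and you correctly observe that the polynomial bounds in the proof of Theorem~\ref{th:bounded-m} depend only on $\rank(B)$ and the growth function of $B$, both of which dominate the corresponding quantities for any quotient of $B$.
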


\begin{proof}
Proposition \ref{pr:DP-QSP-reduction} reduces
orientable quadratic equations \eqref{eq:orientable} in $A\wr B$ satisfying the condition $m\le M$ to instances of
$\QSP(\cdot,B,\cdot_{|\ovf|<M},\cdot)$, which, by
Theorem \ref{th:bounded-m}, are decidable in polynomial time.
\end{proof}

\subsubsection{Non-uniform problem: group $A$}
\label{se:group-A}

The choice of the (nontrivial) group $A$ is irrelevant
for $\NP$-hardness of the Diophantine problem.

\begin{corollary}\label{co:irrelevant-A2}
For any nontrivial finitely generated abelian groups $A_1,A_2$, and $B$
the Diophantine problem over $A_1\wr B$ and
over $A_2\wr B$ are either both $\NP$-hard or are both in $\P$.
\end{corollary}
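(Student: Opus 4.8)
The plan is to reduce this to the classification already established for orientable equations, namely Corollary \ref{co:all-orientable}, which states that the Diophantine problem for orientable quadratic equations over $A\wr B$ (with $A$ nontrivial) is polynomial-time decidable precisely when $B$ is finite, and $\NP$-complete precisely when $B$ is infinite. The key observation is that in this classification the complexity is controlled solely by $B$: the only role played by $A$ is the standing assumption that it is nontrivial, and beyond that $A$ is irrelevant to the stated dichotomy.

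First I would split into the two cases. If $B$ is finite, then by Corollary \ref{co:all-orientable} the Diophantine problems over $A_1\wr B$ and over $A_2\wr B$ are both in $\P$. If $B$ is infinite, then by the same corollary both are $\NP$-complete, hence in particular both $\NP$-hard. Since $B$ is either finite or infinite, the two problems always fall into the same class, which is exactly the assertion.

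There is no genuine obstacle here: the statement is essentially a repackaging of Corollary \ref{co:all-orientable}, and the only point requiring care is that the standing hypothesis $A_1,A_2\not\simeq\mathbbm{1}$ is invoked (for a trivial $A$ the problem degenerates and is excluded throughout the paper). For completeness I note an equivalent route: Proposition \ref{pr:DP-QSP-reduction} transports the Diophantine problem for orientable equations over $A\wr B$ back and forth to instances of $\QSP(A,B,\cdot,\cdot)$, after which one can invoke Corollary \ref{co:irrelevant-A}, the $\QSP$-level analogue stating that $\NP$-hardness of $\QSP(A,B,\cdot,\cdot)$ depends only on $B$; but appealing to Corollary \ref{co:all-orientable} directly is shorter and self-contained.
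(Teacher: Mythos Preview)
Your proof is correct. The paper's own argument is precisely the ``equivalent route'' you mention at the end: it invokes Proposition~\ref{pr:DP-QSP-reduction} to pass to $\QSP(A_i,B,\cdot,\cdot)$ and then appeals to Corollary~\ref{co:irrelevant-A}. Your primary route via Corollary~\ref{co:all-orientable} is a legitimate shortcut, since that corollary already packages the same $\QSP$ reduction together with Theorems~\ref{th:QSP-infinite-B} and~\ref{th:QSP-finite-B}; the two arguments are logically equivalent and differ only in which intermediate statement is cited.
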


\begin{proof}
By Proposition \ref{pr:DP-QSP-reduction}, the Diophantine problems
over $A_1\wr B$ and over $A_2\wr B$ are polynomial-time
equivalent to $\QSP(A_1,B,\cdot,\cdot)$ and $\QSP(A_2,B,\cdot,\cdot)$
respectively.
Hence, the statement follows from Corollary \ref{co:irrelevant-A}.
\end{proof}

\subsection{Uniform problem}

By Corollaries \ref{co:equations-NP} and \ref{co:DP-infinite-B}, the uniform Diophantine problem for orientable equations is $\NP$-complete.

Here we make one observation about the contribution of the group $A$ to the computational hardness of solving spherical equations.
Even though a particular choice of $A$ does not affect the $\NP$-hardness of the problem as observed in Section \ref{se:group-A}, we can argue that different choices of $A$ make the problem easier or harder.

\begin{corollary}\label{co:finite-B}
The Diophantine problem for spherical equations 
over the class of groups $\{\MZ^n\wr \MZ_2\}_{n\in\MN}$
is $\NP$-hard.
\end{corollary}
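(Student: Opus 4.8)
The plan is to reuse the $\NP$-hardness reduction behind Theorem \ref{th:B-Z2} and transport it, via Proposition \ref{pr:DP-QSP-reduction} in the genus-zero case, into a statement about spherical equations. This is the same maneuver as in the proof of Corollary \ref{co:DP-infinite-B}, except that now the group $A=\MZ^n$ (rather than $B$) is the source of hardness and is allowed to grow with the input.

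First I would recall the construction from the proof of Theorem \ref{th:B-Z2}: from an instance $M\in\Mat(n,\MZ)$ of $\ZOE$ with zero-one columns $c_1,\dots,c_n$ one produces, with $A=\MZ^n$, functions $f_1,\dots,f_n,f\in A^{\MZ_2}$ such that $M$ is a positive instance of $\ZOE$ if and only if $(\MZ^n,\MZ_2,(f_1,\dots,f_n,f),0)$ is a positive instance of $\QSP$. Next I would turn this $\QSP$ instance into the spherical equation
$$
\CE\colon\ \prod_{j=1}^{n+1} z_j^{-1}c_j z_j=1
$$
over $\MZ^n\wr\MZ_2$, where $c_j=(0,f_j)$ for $j=1,\dots,n$ and $c_{n+1}=(0,f)$. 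Since all of these constants have trivial $B$-component, one has $\sum_j\delta_{c_j}=0$ and $\gp{\ovdelta_c}=\gp{0}$, so Proposition \ref{pr:DP-QSP-reduction} applied with $g=0$ (hence $2g=0$ and $B/\gp{\ovdelta_c}=\MZ_2$) gives that $\CE$ is solvable in $\MZ^n\wr\MZ_2$ precisely when $(\MZ^n,\MZ_2,(f_1,\dots,f_n,f),0)$ is a positive $\QSP$ instance, i.e. precisely when $M$ is a positive $\ZOE$ instance.

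Finally I would check that the whole construction $M\mapsto\CE$ is polynomial-time: each $f_j$ is supported on a single point of $\MZ_2$ with $\size(f_j)=O(n)$, and $f$ takes values $-1_n$ and $1_n-\sum_i c_i$, whose entries are bounded by $n+1$ in absolute value, so $\size(f)=O(n^2)$ even in the unary encoding used throughout; thus $\CE$ has size polynomial in $\size(M)$. Since $\ZOE$ is $\NP$-complete, this yields $\NP$-hardness of the Diophantine problem for spherical equations over the class $\{\MZ^n\wr\MZ_2\}_{n\in\MN}$. I do not expect a genuine obstacle here; the only point requiring a little care is the bookkeeping that choosing all constants with trivial $\MZ_2$-component makes the quotient $B/\gp{\ovdelta_c}$ in Proposition \ref{pr:DP-QSP-reduction} equal to $B=\MZ_2$ itself, so that the instance of $\QSP$ one lands on is exactly the hard one supplied by Theorem \ref{th:B-Z2}, with $A$ being the unbounded-rank group $\MZ^n$.
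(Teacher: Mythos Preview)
Your argument is correct and follows the same route as the paper: invoke Theorem~\ref{th:B-Z2} and pass from the $\QSP$ instance to a spherical equation via Proposition~\ref{pr:DP-QSP-reduction} with $g=0$ and constants of trivial $\MZ_2$-component. The paper's proof is the one-line ``Follows from Theorem~\ref{th:B-Z2}'', relying on the reader to fill in exactly the translation you have written out (as was done explicitly in the proof of Corollary~\ref{co:DP-infinite-B}); your added size estimate confirming the reduction is polynomial-time is a welcome detail.
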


\begin{proof}
Follows from Theorem \ref{th:B-Z2}.
\end{proof}

Another observation is that
Malcev-type equations in wreath products of finitely generated abelian groups can be solved in polynomial time.

\begin{corollary}\label{co:malcev}
Fix $g\in \MN$.
The Diophantine problem for equations of genus $g$
$$
\prod_{i=1}^g [x_i,y_i] = c
$$
over a given wreath product $A\wr B$ is in $\P$.
\end{corollary}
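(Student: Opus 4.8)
The plan is to reduce the solvability of $\prod_{i=1}^g[x_i,y_i]=c$ over $A\wr B$, in polynomial time, to a single-function instance of $\QSP$, and then to invoke Theorem \ref{th:malcev}.

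First I would compute the normal form $c=(\delta_c,f_c)$ of the input constant (polynomial time) and observe, via the commutator identity $[x,y]=(0,f_y(1^0-1^{\delta_x})-f_x(1^0-1^{\delta_y}))$ together with the multiplication rule in $A\wr B$, that $\prod_{i=1}^g[x_i,y_i]$ always has trivial $B$-component, with $A^B$-component $\sum_{i=1}^g\bigl(f_{y_i}(1^0-1^{\delta_{x_i}})-f_{x_i}(1^0-1^{\delta_{y_i}})\bigr)$. So $\delta_c=0$ is a necessary condition, checkable in polynomial time; assuming it holds, the equation is solvable if and only if there exist shifts $\ovdelta_x,\ovdelta_y\in B$ and functions $f_{x_i},f_{y_i}\in A^B$ with $\sum_{i=1}^g\bigl(f_{y_i}(1^0-1^{\delta_{x_i}})-f_{x_i}(1^0-1^{\delta_{y_i}})\bigr)=f_c$.

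Next I would fix the shifts, set $N=\gp{\ovdelta_x,\ovdelta_y}\le B$, and apply Proposition \ref{pr:transform-components}, treating the $f_{x_i},f_{y_i}$ as the unknowns and $W'=-f_c$ as the part independent of them: the equation is solvable in $A^B$ if and only if $f_c=_{A^{B/N}}0$. Quantifying over shifts then amounts to quantifying over subgroups $N\le B$ with $\rank(N)\le 2g$ — one direction because $\gp{\ovdelta_x,\ovdelta_y}$ is generated by $2g$ elements, the other because any such $N$ can be written as $\gp{\ovdelta_x,\ovdelta_y}$ after padding a generating set of $N$ with zeros and distributing the elements among $\ovdelta_x,\ovdelta_y$. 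Hence, given $\delta_c=0$, solvability is equivalent to the existence of $N\le B$ with $\rank(N)\le 2g$ and $f_c=_{A^{B/N}}0$, which is precisely the assertion that $(A,B,(f_c),2g)$ is a positive instance of $\QSP$ with the single-function sequence $\ovf=(f_c)$ — for one function the shift in \eqref{eq:qsp_shift} is irrelevant, since $\varphi^\ast$ commutes with the $B$-action so $\varphi^\ast(f_c^{\delta})=0$ iff $\varphi^\ast(f_c)=0$. Since $g$ is fixed, $2g$ is a fixed quotient-rank bound, so Theorem \ref{th:malcev} applies and the problem is in $\P$.

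I expect the only delicate point to be the bookkeeping in the middle step: verifying that quantification over all tuples of shifts $\ovdelta_x,\ovdelta_y$ corresponds exactly to quantification over subgroups $N$ of rank at most $2g$, and that Proposition \ref{pr:transform-components} genuinely applies here because $-f_c$ involves none of the unknowns $f_{x_i},f_{y_i}$. Everything else — the normal-form computation, the $\delta_c=0$ check, and the appeal to Theorem \ref{th:malcev} — is routine.
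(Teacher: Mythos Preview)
Your proposal is correct and follows essentially the same route the paper intends: reduce the Malcev-type equation to a single-function $\QSP$ instance with quotient-rank bound $2g$, then invoke Theorem~\ref{th:malcev}. The paper's one-line proof implicitly goes through Proposition~\ref{pr:DP-QSP-reduction} (with $m=1$), whereas you re-derive that reduction directly from the commutator formula and Proposition~\ref{pr:transform-components}; the underlying argument is the same, and your bookkeeping about the correspondence between tuples $(\ovdelta_x,\ovdelta_y)$ and subgroups of rank $\le 2g$, as well as the irrelevance of the single shift, is accurate.
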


\begin{proof}
Follows from Theorem \ref{th:malcev}.
\end{proof}

\section{Conclusion}
\label{se:Conclusion}

Ultimately, the main parameter responsible for the computational 
hardness of the Diophantine problem for orientable equations \eqref{eq:orientable}
over wreath products $A\wr B$ is the number of conjugates $m$.
Imposing a bound on $m$ makes the problem computationally 
feasible (Corollary \ref{co:bounded-m}).
Certain values of the genus $g$, the size of $B$, and $\rank(B)$
can make the problem feasible as well 
(Corollaries \ref{co:all-orientable} and \ref{co:g-vs-rankB}).
In the uniform case, when $A$ and $B$ are a part of the input,
unbounded rank of $A$ makes the problem computationally hard
even for small finite $B$ (Corollary \ref{co:finite-B}).

\bibliography{main_bibliography}

\end{document}